\DeclareMathOperator{\mmd}{MMD}
\DeclareMathOperator{\AR}{AR}
\DeclareMathOperator{\LL}{L}
\DeclareMathOperator{\ar}{AR}
\def\independenT#1#2{\mathrel{\rlap{$#1#2$}\mkern2mu{#1#2}}}
\DeclareMathOperator{\hsic}{HSIC}
\DeclareMathOperator{\whsic}{wHSIC}
\newcommand\independent{\protect\mathpalette{\protect\independenT}{\perp}}
\def\independenT#1#2{\mathrel{\rlap{$#1#2$}\mkern2mu{#1#2}}}
\newcommand{\tr}{\operatorname{tr}}
\newtheorem{definition}{Definition}[section]
\newtheorem{theorem}{Theorem}[section]
\newtheorem{lemma}{Lemma}[section]
\newcommand*{\addFileDependency}[1]{% argument=file name and extension
  \typeout{(#1)}
  \@addtofilelist{#1}
  \IfFileExists{#1}{}{\typeout{No file #1.}}
}
\begin{document}

%% Do NOT include any fronmatter information; including the title, author names,
%% institutes, acknowledgments and title footnotes (author information, funding
%% sources, etc.). Start the document with the first section or paragraph of
%% the article.

\textbf{ \Large A kernel- and optimal transport- based test of independence between covariates and right-censored lifetimes}\\

{D. Rindt, D. Sejdinovic and D. Steinsaltz.}\\

{Department of Statistics, University of Oxford, UK}

\section{Abstract}
We propose a nonparametric test of independence, termed optHSIC, between a covariate and a right-censored lifetime. Because the presence of censoring creates a challenge in applying the standard permutation-based testing approaches, we use optimal transport to transform the censored dataset into an uncensored one, while preserving the relevant dependencies. We then apply a permutation test using the kernel-based dependence measure as a statistic to the transformed dataset. The type 1 error is proven to be correct in the case where censoring is independent of the covariate. Experiments indicate that optHSIC has power against a much wider class of alternatives than Cox proportional hazards regression and that it has the correct type 1 control even in the challenging cases where censoring strongly depends on the covariate.

\section{Introduction}
\label{sec1}

We propose a nonparametric test of independence between a possibly multidimensional covariate and a right-censored lifetime. Existing approaches to this problem suffer from several limitations: if we cluster the continuous covariate into groups, and then test for equality of
lifetime distributions among the groups, the results will
depend on the arbitrary choice of boundaries between the groups,
while the spread of covariates within groups reduces power. Alternatively, one might fit a (semi-)parametric regression model, and test whether the regression coefficient corresponding to the covariate differs significantly from zero. The most commonly used such method is the Cox proportional hazards (CPH) model, which makes two assumptions (\cite{cph}): first, the hazard function must factorize into a function of time and a function of the covariate (the {\em proportional hazards} or {\em relative risk} condition); second, the effect of a covariate on the logarithm of the hazard function must be linear. 
Although this is a flexible model, in some cases these assumptions are violated. More complicated hazards are found for example when studying the relationship between body mass index and mortality - \cite{zajacova2012shape} reports $U$- or $V$-shaped hazards - or between diastolic blood pressure and various health outcomes (\cite{lip2019diastolic}).

Since distance- and kernel-based approaches have been used successfully for independence testing on uncensored data (\cite{distancecovariance}, \cite{kernelteststatisticalindependence}), it is natural to investigate whether these methods can be extended to the case of right-censored lifetimes. To this end we propose applying optimal transport to transform the censored dataset into an uncensored dataset in such a way that, 1) the new uncensored dataset preserves the dependencies of the original dataset, and 2) we can apply a standard permutation test to the new dataset with test statistic given by Distance Covariance (DCOV) (\cite{distancecovariance}) or, equivalently, the Hilbert--Schmidt Independence Criterion (HSIC) (\cite{kernelteststatisticalindependence}). 

Progress in kernel-based independence testing for censored data is further motivated by the fact that in the simpler context of uncensored data the corresponding methods have been further developed into tests of conditional independence, mutual independence, and have been applied to causal inference (\cite{zhang2012kernel}, \cite{doi:10.1111/rssb.12235}), and in particular detection of confounders. While the present work does not propose methods for testing conditional or mutual independence, we believe independence testing is a first step towards those ends. Additionally, since our method allows for multidimensional covariates, one can first test for a dependency based on the full multidimensional covariate, and then test whether the dependency remains when certain sub-dimensions are omitted from the covariate.

Section \ref{sec:background_material} overviews relevant concepts in survival analysis, distance- and kernel-based independence testing, and optimal transport. Section \ref{sec:transformation} proposes a transformation of the data based on optimal transport. Section \ref{sec:opthsic} introduces our testing procedure named optHSIC. Although we have not yet been able to prove control of the type 1 error rate in full generality, we do show the type 1 error rate to be correct in the case where censoring is independent of the covariate. Furthermore we obtain very promising results in simulation studies, showing correct type 1 error control even under censoring that depends strongly on the covariate.  Section \ref{sec:alternative_methods_when_censoring_independent} explores alternative kernel-based approaches under the additional assumption that censoring is independent of the covariate. These methods serve as benchmarks for the power performance of optHSIC. Section \ref{sec:experiments} compares the power and type 1 error of all tests and CPH regression in simulated data.

\section{Background Material}\label{sec:background_material}

\subsection{Right-Censored Lifetimes}

Let $T \in \mathbb R_{\geq 0}$ be a lifetime subject to right-censoring, so that we do not observe $T$ directly, but instead observe $Z \coloneqq \min \{ T,C \}$ for some censoring time $C \in \mathbb R_{\geq 0}$, as well as the indicator $\Delta\coloneqq 1\{C>T\}$. We further observe a covariate vector $X \in \mathbb R^d$, where $\mathbb R^d$ is equipped with the Borel sigma algebra. In total, for an i.i.d. sample of size $n$, we thus obtain the data $D \coloneqq ((x_i,z_i, \delta_i))_{i=1}^n \in (\mathbb R^d \times \mathbb R_{\geq 0} \times \{0,1 \})^n$. 

The main goal of this paper is developing a test of $H_0: X\independent T$ versus $H_1:X \not \independent T$ based on the sample $D$. Throughout this paper we will make the following assumptions. \\

\textbf{Assumption 1:} We assume that conditional on $\{X_i\}_{i=1}^n$, the random variables $\{(T_i,C_i)\}_{i=1}^n$ are mutually independent. \\

\textbf{Assumption 2:} We assume that $C \independent T \vert X.$ \\

Denote $F_{T\vert X}(t\vert x)=P(T\leq t\vert X=x)$ and $F_{C\vert X}(t\vert x)=P(C\leq t\vert X=x)$. We assume $F_{T|X}$ has a density $f_{T|X}$. Let $S_{T|X}(t|x)=1-F_{T|X}(t|X)$. We define the hazard rate of an individual with covariate $x$ to be
$\lambda_{T|X}(t\vert x)=f_{T|X}(t\vert x)/S_{T|X}(t\vert x).$
The Cox proportional hazards model (CPH) assumes that the hazard rate can be written as $\lambda_{T|X}(t\vert x)=\lambda(t)\exp(\beta^T x)$ for some baseline hazard $\lambda(t)$ and a vector $\beta \in \mathbb R^d$. This model enables estimation of $\beta$ and testing the significance of the difference of entries of $\beta$ from zero. The CPH model is the most commonly used regression method in survival analysis. 

A last important concept is that of the `individuals at risk at a time $t$'. By this we mean the set $\{i:Z_i\geq t\}$. We also use the notation $[a_1,\dots,a_k]$ for a multiset (a set with potentially repeated elements) with elements $a_1,\dots,a_k$. We refer, for example, to the multiset $\AR_t=[x_i: i\in \{1,\dots, n\}, Z_i\geq t ]$, the multiset of `covariates at risk at time $t$'. 

\subsection{Independence testing using kernels \label{sec:backround_material_hsic}}

Kernel methods have been successfully used for nonparametric independence- and two-sample testing (\cite{kernelteststatisticalindependence}, \cite{kerneltwosampletest}). We now give some of the relevant background in kernel methods. 

\begin{definition}{ (Reproducing Kernel Hilbert Space)}(\cite{learningwithkernels}) Let $\mathcal X$ be a non-empty set and $ H$ a Hilbert space of functions $f:\mathcal X \to \mathbb R$ endowed with dot product $\langle \cdot,\cdot \rangle $. Then $ H$ is called a reproducing kernel Hilbert (RKHS) space if there exists a function $k: \mathcal X \times \mathcal X \to \mathbb R$ with the following properties.
\begin{enumerate}
\item $k$ satisfies the reproducing property 
\begin{equation}
\langle f,k(x,\cdot)\rangle=f(x) \ \text{for all} \ f \in  H, x \in \mathcal X.
\end{equation}
\item $k$ spans $ H$, that is, $ H= \overline{\mathrm{span}\{k(x,\cdot) \ \vert x \in {\mathcal X} \}}$ where the bar denotes the completion of the space.
\end{enumerate}  \end{definition}
Let $\mathcal X$ together with a sigma-algebra be a measurable space and let $H_{\mathcal X}$ be an RKHS on $\mathcal X$ with reproducing kernel $k$. Let $P$ be a probability measure on $\mathcal X$.  If $E_P\sqrt{k(X,X)}<\infty$, then there exists an element $\mu_P\in H_{\mathcal X}$ such that $E_Pf(X)=\langle f,\mu_P \rangle$ for all $f\in H_{\mathcal X}$ (\cite{kerneltwosampletest}), where the notation $E_P f(X)$ is defined to be $\int f(x) P(dx)$. The element $\mu_P$ is called the mean embedding of $P$ in $H_{\mathcal X}$. Given a second distribution $Q$ on $\mathcal X$, for which a mean embedding exists, we can measure the dissimilarity of $P$ and $Q$ by the distance between their mean embeddings in $H_{\mathcal X}$:
\begin{align*}
\mmd(P,Q)\coloneqq \vert \vert \mu_P - \mu_Q \vert \vert _{H_{\mathcal X}},
\end{align*}
which is also called the Maximum Mean Discrepancy ($\mmd$). The name comes from the following equality (\cite{kerneltwosampletest}), 
\begin{align*}
\vert \vert \mu_P - \mu_Q \vert \vert _{H_{\mathcal X}} = \sup_{f \in H_{\mathcal X}: \| f \| \leq 1} E_P f(X) -E_Qf(X)
\end{align*}
showing that MMD is an integral probability metric. Given a sample $\{x_i\}_{i=1}^n$ and the empirical distribution,  $\sum_{i=1}^n \delta({x_i})/n$, where $\delta(x)$ denotes the Dirac measure at $x$, the corresponding mean embedding is given by $ \sum_{i=1}^n k(x_i,\cdot)/n.$

Suppose now that $\mathcal Y$ together with some sigma algebra is a second measurable space, and let $H_{\mathcal Y}$ be an RKHS on $\mathcal Y$ with kernel $l$. Let $X$ be a random variable in $\mathcal X$ with law $P_X$ and similarly let $Y$ be a random variable in $\mathcal Y$ with law $P_Y$. Finally let $P_{XY}$ denote the joint distribution on $\mathcal X \times \mathcal Y$ equipped with the product sigma-algebra. We let $H$ denote the RKHS on $\mathcal X \times \mathcal Y$ with kernel
$$ K((x,y),(x',y'))\coloneqq k(x,x')l(y,y').$$

In \cite{kernelteststatisticalindependence} it was proposed that the dependence of $X$ and $Y$ could be quantified by the following measure:

\begin{definition}
The Hilbert--Schmidt independence criterion (HSIC) of $X$ and $Y$ is defined by
\begin{align*}
\hsic(X,Y)\coloneqq \vert \vert \mu_{P_{XY}}-\mu_{P_XP_Y} \vert \vert^2_{H}
\end{align*}
where $P_XP_Y$ denotes the product measure of $P_X$ and $P_Y$.
\end{definition}
Let $(X,Y)$,$(X',Y')$ and $(X'',Y'')$ be three mutually independent copies of the same random variable with law $P_{XY}$. Using the reproducing property and the definition of mean embeddings, it can be shown that 
\begin{align*}\begin{split} \hsic(X,Y) &=  E_{XY}E_{X'Y'}k(X,X')l(Y,Y') + E_{XX'}  k(X,X')E_{YY'}l(Y,Y') \\ &-2E_{XY} E_{X'Y''}k(X,X') l(Y,Y''). \end{split}
\end{align*}

Now assume we are given a sample $D=((x_i,y_i) )_{i=1}^n$ of independent observations of the random pair
$(X,Y)$.  An empirical estimate of HSIC$(X,Y)$ can be obtained by measuring the distance between the embedding of the empirical distribution of the data and the embedding of the product of the marginal empirical distributions. That is, we define $\hsic(D)$ by
\begin{align*}
\hsic(D) \coloneqq \bigg \vert \bigg \vert \frac 1n \sum_{i=1}^n K((x_i,y_i),\cdot)   - \frac{1}{n^2} \sum_{i=1}^n  \sum_{j=1}^n K((x_i,y_j),\cdot)\bigg \vert \bigg \vert_{H}^2. 
\end{align*}
Using the reproducing property of the kernel and the definition of $K$ in terms of $k$ and $l$, $\hsic(D)$ can be shown to equal
\begin{align*}
\hsic(D) &= \frac{1}{n^2} \sum_{i,j=1}^n k(x_i,x_j)l(y_i,y_j)+ \frac{1}{n^2} \sum_{i,j=1} k(x_i,x_j)\frac{1}{n^2}\sum_{q,r=1}^nl(y_q,y_r) \\ &- \frac{2}{n^3}\sum_{i,j,r=1}^nk(x_i,x_j)l(y_i,y_r).
\end{align*} 
While the biased $\hsic(D)$ defined above is the most commonly used estimator in the literature (\cite{distancecovariance}, \cite{kernelteststatisticalindependence}), unbiased estimators of $\hsic(X,Y)$ exist too (\cite{song2012feature}). The bias of $\hsic(D)$ is $O(n^{-1})$, (\cite{kernelteststatisticalindependence}) and for appropriate choices of kernels, the permutation test with the biased statistic and the permutation test with the unbiased statistic are consistent tests - see section \ref{sec:choice_of_kernel} for details. Both tests also have correct type 1 error rate. Because the biased $\hsic(D)$ is more commonly encountered in the literature and has a slightly easier analytic form, we use $\hsic(D)$ throughout our paper.  The following algorithm shows how $\hsic$ is commonly combined with a permutation test for independence testing. 
\begin{figure}[H]
\begin{algorithm}[H] \label{hsicpermutationalgorithm}
    \SetKwInOut{Input}{Input}   \SetKwInOut{Output}{Output}
	\Input{ Observed data $D=((x_i,y_i))_{i=1}^{n}$, significance level $\alpha$, number of permutations $B$. } 
	Sample $\pi_j, \ 1\leq j \leq B$  distributed uniformly and independently from the symmetric group $S_n$ of all permutations on $n$ elements. Denote $\pi D \coloneqq  ((x_{\pi (i)},y_i) )_{i=1}^n$.\;
	Breaking ties at random, compute the rank $R$ of $\hsic(D)$ in the vector $$\left(\hsic (D),\hsic (\pi_1 D),\hsic (\pi_2 D),\dots,\hsic (\pi_{B} D) \right)$$\
	where $R=1$ is the rank of the largest element and $R=B+1$ is the rank of the smallest element. \;
	\Output{Reject $H_0$ if $p\coloneqq R/(B+1)\leq  \alpha$, otherwise accept $H_0$. }
	
    \caption{Independence testing using HSIC and a permutation test}
\end{algorithm}
\vspace{.5cm}
\caption{The algorithm for independence testing using HSIC and a permutation test, resulting in a $p$-value and a rejection decision. }
\end{figure}

\subsubsection{Choice of kernel \label{sec:choice_of_kernel}}
Throughout this paper we assume the covariates take values in the Euclidean space $\mathcal X=\mathbb R^d$ for some $d\in \mathbb N$. Note that in our case $\mathcal Y=\mathbb R_{\geq 0}$. We let both $k$ and $l$ be instances of the covariance kernel of Brownian motion. That is, $$k(x,x')= \left( \| x\| + \| x' \| - \| x-x' \| \right),$$ and 
$$l(y,y')=\left( |y| + |y'| - | y-y' | \right),$$
where $\|\cdot \|$ denotes the Euclidean norm. See \cite{equivalencedistancerkhs} for a discussion of this kernel. \\

The reason for this choice is three-fold. Firstly, under this choice of kernels, $\hsic$ coincides with Distance Covariance (DCOV) (\cite{distancecovariance}). The equivalence between $\hsic$ and DCOV was proved in \cite{equivalencedistancerkhs}. DCOV is a  well studied measure of dependence and if $X,Y$ are random variables with compact support, then $\text{DCOV}(X, Y)=0$ if and only if $X\independent Y$. Furthermore, the permutation test with test statistic DCOV is consistent in the sense that, for each distribution $P_{XY}$ with compact support and such that $X\not \independent Y$, the probability that the permutation test rejects the null hypothesis converges to 1, as the sample size converges to infinity (\cite{rindt2020consistency}). Secondly, unlike other potential kernels, such as the Gaussian kernel, the covariance kernel of Brownian motion spares us the need to select a bandwidth. Thirdly, our test relies on optimal transport to transform the original data into a transformed dataset. The optimal transport procedure, discussed in the next section, requires a metric with respect to which similarity is preserved. It appears natural to measure the dependency in the transformed dataset using the same metric as the one underlying the transformation. 

\subsection{Optimal transport }\label{sec:background_optimal_transport}
Let $A =[a_1,\dots,a_{|A|}]$ and $B = [b_1,\dots,b_{|B|}]$ be multisets of length $|A|$ and $|B|$ with $a_i,b_j \in \mathbb R^d$. Let $v^A$ be a distribution vector on $A$, by which we mean: $v^A=(v^A_1,\dots,v^A_{|A|}) \in \mathbb R^{|A|}$ so that $v^A_i\geq 0$ and $\sum_{i=1}^{|A|} v^A_i=1$. Let $v^B$ be a distribution vector on $B$. Let $m(a,b)$ be a metric on $\mathbb R^d$. Then the earth mover's distance (EMD) between $v^A$ and $v^B$ is defined as
\begin{align}\label{eqn:objective_emd}
\min_{P_{ij}\in \mathbb R^{|A|\times|B|}}\sum_{i=1}^{|A|}\sum_{j=1}^{|B|} P_{ij}m(a_i,b_j)
\end{align}
subject to
\begin{align*}
P_{ij}&\geq 0 &i=1,\dots,|A|,& \ j=1,\dots,|B|,\\
\sum_{j=1}^{|B|}P_{ij}&=v^A_i &i=1,\dots,|A|&, \ \\
\sum_{i=1}^{|A|}P_{ij}&=v^B_j &j=1,\dots,|B|&. \ \\
\end{align*}
Let $U$ be a random variable taking values in $A$ with distribution $v^A$, and $U'$ a random variable taking values in $B$ with distribution $v^B$. In the next section we will write `let $P$ be the joint distribution that solves the optimal transport problem between $U$ and $U'$'. By that we mean $P(U=a_i,U'=b_j)\coloneqq P_{ij}$ for $i=1,\dots,|A|,j=1,\dots,|B|$, where $P_{ij}$ is the matrix that minimizes Equation (\ref{eqn:objective_emd}). Note that $P$ is a valid joint distribution of $U$ and $U'$. Furthermore, for any $i$ with $v^A_i>0$ it holds that $P(U'=b_j|U=a_i)=P_{ij}/v^A_i$.

\section{Data transformation based on optimal transport \label{sec:transformation}}

\subsection{Objective of the algorithm \label{sec:objective_algorithm}}

To use HSIC for an independence test of $X$ and $T$, one requires the explicit values $l(T_i,T_j)$ for $i,j=1,\dots,n$. Since for right-censored data $T_i$ may be known only in terms of a lower bound, there is no straightforward way to apply these methods to right-censored data. In this section we propose a transformation of the original, censored dataset, into a synthetic dataset consisting of $n$ observed events. The algorithm uses optimal transport and its goal is twofold: first, it should return a dataset to which we can apply a permutation test with test-statistic HSIC, and obtain correct $p$-values under the null hypothesis; second, it should return a dataset in which the dependence between $X$ and $T$ is similar to the dependence in the original dataset. Indeed, the transformation of the data is of independent interest: we use the standard permutation test with test statistic HSIC/DCOV, but other statistics could be considered. We do believe that the main value of the transformation lies in how it can be straightforwardly combined with permutation testing on the transformed data and we expect it may be difficult to use the transformation for other purposes.

\subsection{Definition of the transformation \label{sec:transformation_algorithm}}

To define the algorithm, we use the following notation. Let $A = [a_1,\dots,a_{|A|}]$ be a multiset where $a_i\in \mathbb R^d$ for $i=1,\dots,|A|$. We define $\text{Uniform}(A)=\sum_{i=1}^{|A|} \delta(a_i)/|A|$, to be the uniform distribution over the elements, where $\delta(a)$ is the Dirac measure at $a$. By $B=\text{Remove}(a,A)$ we set $B$ to be the multiset that remains when one instance of $a$ is removed from $A$, and by $B=\text{Add}(a,A)$ we set $B$ to be the multiset that consists of the elements of $A$, with an added element $a$. We further assume that we are given a sample $D=((x_i,z_i,\delta_i))_{i=1}^n$ so that $z_1<z_2<\dots<z_n$: that is, we have $n$ distinct event times and they are labeled in increasing order. For a variable $a$ we use $a\leftarrow b$ to change the value of $a$ to $b$. In computing the optimal transport coupling as defined in Section \ref{sec:background_optimal_transport}, we use the Euclidean metric $\| x-y\|=\sqrt{\sum_{i=1}^d (x_i-y_i)^2}$ for $x\in \mathbb R^d$. The proposed transformation is defined in Algorithm \ref{algorithm:transformation}.

\begin{figure}[H]
\begin{algorithm}[H]
    \SetKwInOut{Input}{Input}
    \SetKwInOut{Output}{Output}
	\Input{$D= ((x_i,z_i,\delta_i))_{i=1}^{n}$, with $z_1  < \dots < z_n.$ } 
	Define the multisets $\LL=\ar=[x_1,\dots,x_n ]$ and $\tilde D=[\ ]$. (Note $\tilde D$ is an empty multiset). Set $d=0$\;
	\For{$i=1,\dots,n-1$}{ \If{ $\delta_i=1$}{ $d\leftarrow d+1$\;
	Compute the joint distribution matrix $P \in \mathbb R^{|\ar|\times |\LL|}$ that solves the optimal transport problem between $U\sim\text{Uniform}(\ar)$ and $U'\sim\text{Uniform}(\LL)$ \;
	Condition $P$ on $U=x_i$ to get distribution vector $v\in \mathbb R^{|\LL|}$ (see Section \ref{sec:background_optimal_transport})\;
	Sample from $\LL$ with distribution $v$. Denote the chosen element by $\tilde x$  \;
	$\tilde D \leftarrow \text{Add}((\tilde x,z_i),\tilde D)$\;
	$\LL \leftarrow \text{Remove}(\tilde x,\LL)$\; }

	$\ar \leftarrow \text{Remove}(x_i,\ar)$\;}
\For{$i=d+1,\dots,n$}{ Let $\tilde x$ be any (it does not matter which) element of $\LL$.\;
$\tilde D \leftarrow \text{Add}((\tilde x,z_n),\tilde D)$\;
$\LL \leftarrow \text{Remove}(\tilde x,\LL)$ \;}
\Output{The transformed dataset $\tilde D$, consisting of $n$ points. }

    \caption{Optimal transport based transformation of $D$ to $\tilde D$.}\label{algorithm:transformation}
\end{algorithm}

\caption{The optHSIC algorithm, resulting in a transformed dataset $\tilde D$}
\end{figure}
\subsection{Comments on the transformation algorithm \label{sec:comments_to_algorithm}}
We now give a verbal explanation of Algorithm \ref{algorithm:transformation}.

\textit{Initialization:} 
The input is simply $D=((x_i,z_i,\delta_i) )_{i=1}^n$ where $z_1<\dots<z_n$. We initialize an empty transformed dataset $\tilde D$, to which we will add $n$ observations of the form $(x_i,t_j)$ for some $i,j\in \{1,\dots,n\}$ in the following way. We will loop over the times $z_i$ from $i=1,\dots,n$. At each time, the multiset $\ar$ lists the covariates at risk in the dataset $D$ and $\ar$ is initialized as the multiset containing all covariates. The multiset $\LL$ will list all covariates that have not been added to $\tilde D$ yet. Indeed, as $\tilde D$ is initialized empty, $\LL$ initially contains all $n$ covariates. The variable $d$ will count the number of observed $\delta=1$ events and is initialized 0. 

\textit{First loop from $i=1,\dots,n-1$:} At time $z_i$ we distinguish two cases: if $\delta_i=0$ we leave $\LL$ and $\tilde D$ unchanged, and simply remove one instance of $x_i$ from $\ar$. If $\delta_i=1$ we add $1$ to $d$ (to count the number of observed events). We also select a covariate $\tilde x$ from $\LL$ as follows: First a joint distribution of $\text{Uniform}(\ar)$ and $\text{Uniform}(\LL)$ is computed using optimal transport. This is a matrix $P$ of size $|\ar|\times |\LL|$. This distribution is then conditioned on the event that $\text{Uniform}(\ar)=x_i$, yielding a distribution over $\LL$. We sample from this distribution to obtain the covariate $\tilde x$. The pair $(\tilde x,z_i)$ is then added to $\tilde D$, and the covariate $\tilde x$  is removed from $\LL$. There are now $d$ observations in $\tilde D$ and there are $n-d$ covariates left in $\LL$. We also remove $x_i$ from $\ar$. 

\textit{When we finish the first loop:} It is now the case that $\ar=[x_n]$ (note the previous loop went up to $i=n-1$) and the multiset $\LL$ contains $n-d$ covariates. In the second loop, that runs from $d+1$ to $n$, each remaining covariate in $\LL$ is combined with the time $z_n$ and added to $\tilde D$. After this loop, $\LL$ is empty, and $\tilde D$ contains all $n$ covariates $x_1,\dots,x_n$, each associated with a time. The transformed dataset $\tilde D$ is returned.

Consider the special case in which there is no censoring and $\delta_i=1$ for $i=1,\dots,n$. Then it is easy to verify that in the first loop $\AR=\LL$ at each step $i$, so that the optimal transport algorithm returns a scaled identity matrix. Consequently, at step $i$ the covariate $\tilde x$ is equal to $x_i$ with probability 1, and the final output is $\tilde D=D$. A nontrivial example is worked out in Section \ref{sec:appendix:example_transformation}.

\begin{figure}
\noindent\makebox[\textwidth][c]{\includegraphics[width=0.5\textwidth]{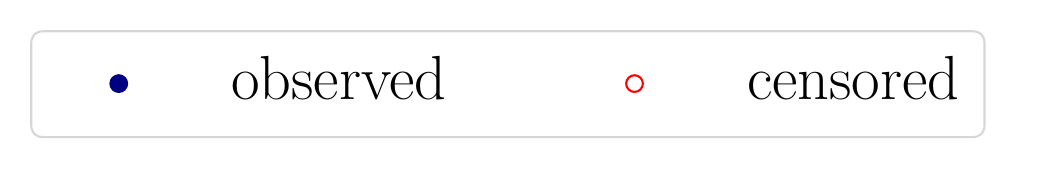}}%

\makebox[\textwidth][c]{\includegraphics[width=1\textwidth]{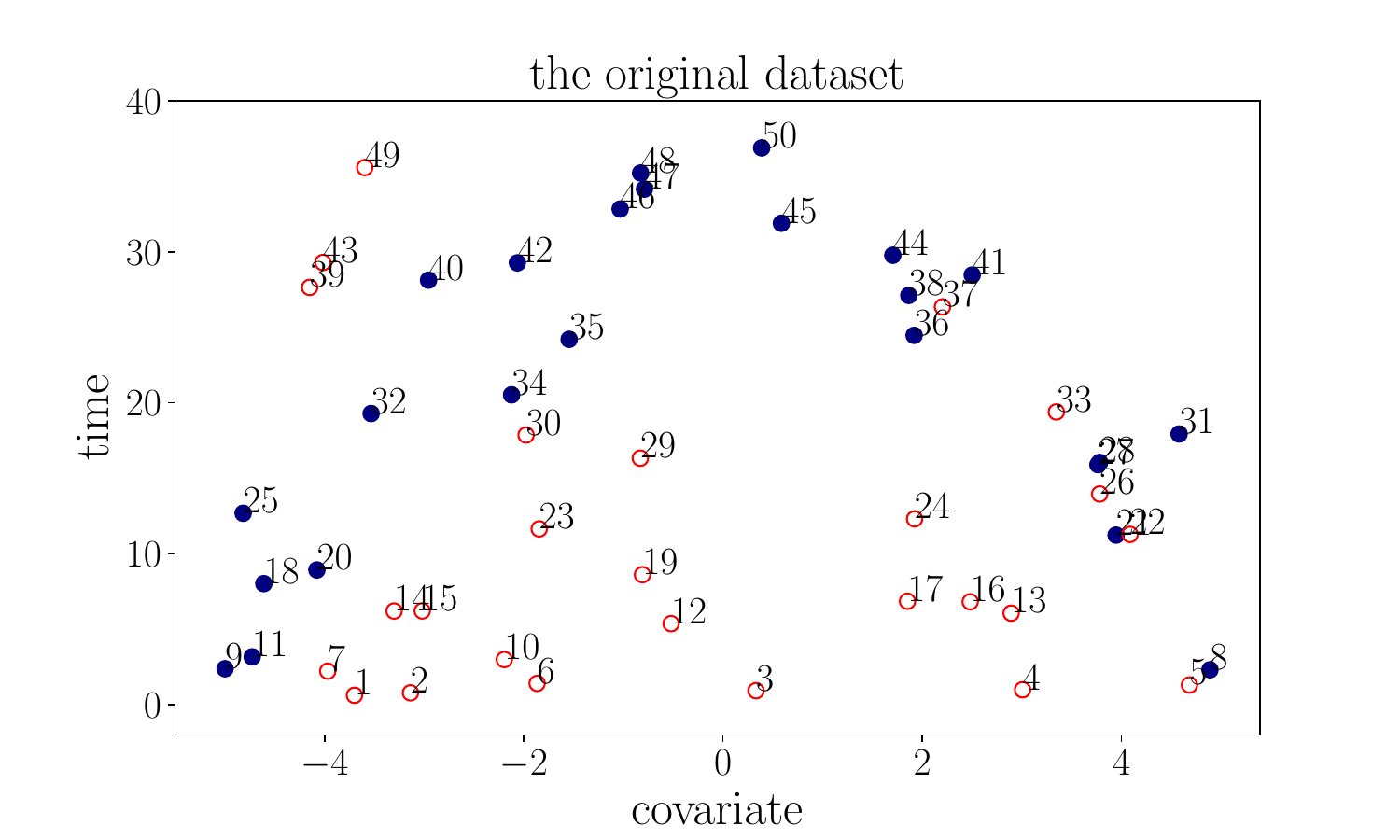}}%
\\
\makebox[\textwidth][c]{\includegraphics[width=1\textwidth]{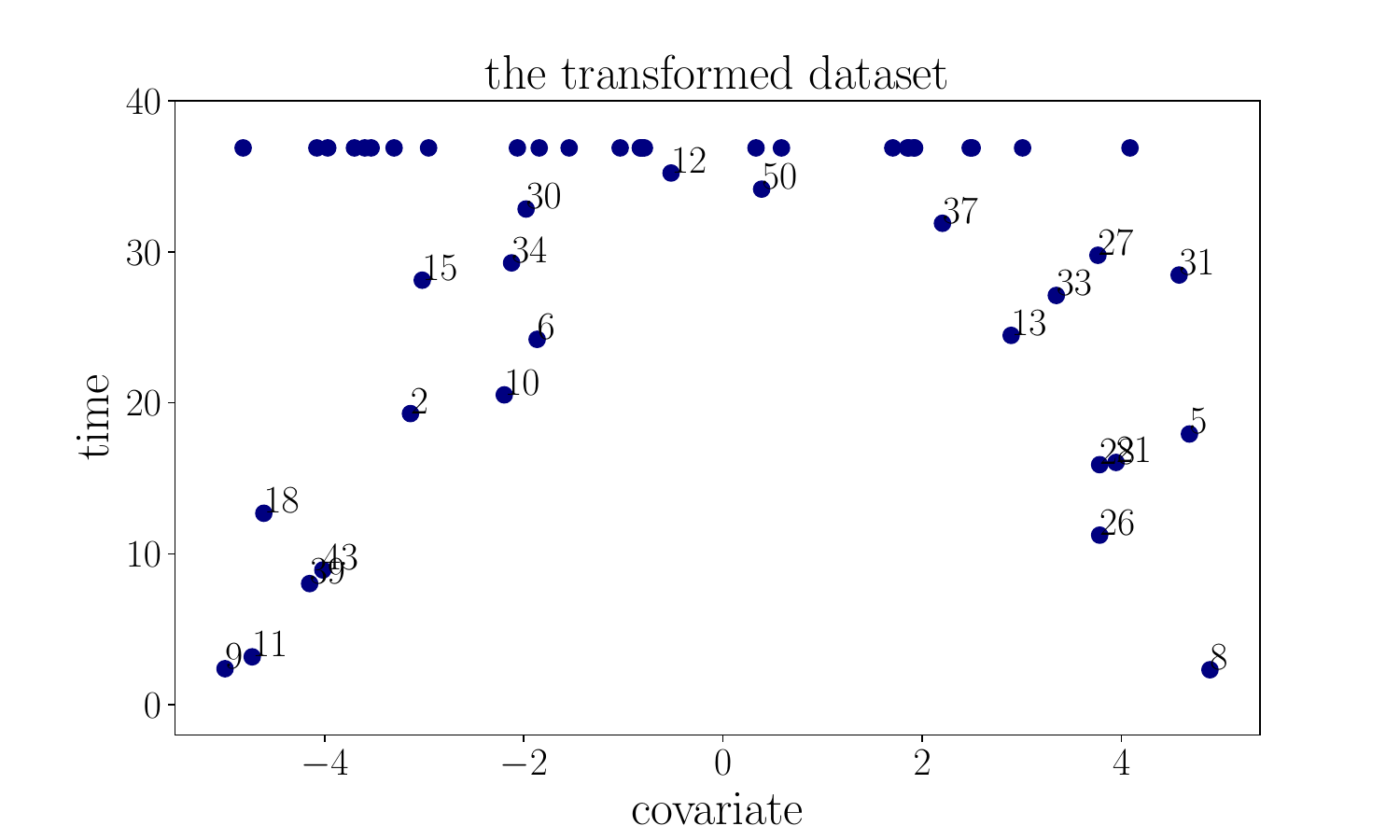}}%\\

\caption{Above: The dataset as originally observed. Below: the synthetic dataset after applying the OPT-based transformation. The labels indicate which individual the observation corresponds to in each dataset. The data is sampled from a parabolic relationship between covariates and lifetimes.}
\label{Fig:figuretransformation}
\end{figure}

\subsection{Intuition behind the transformation \label{sec:intuition_of_algorithm}}

Before we prove properties of the proposed transformation, we briefly comment on the intuition behind the transformation. To this end, first consider a permutation test in the absence of censoring, when we simply observe $D=((x_i,t_i))_{i=1}^n$. The permuted datasets can be generated as follows: loop through the events in order of time, and to each time, associate a covariate that you have not associated to any earlier time. Comparing the original dataset with the datasets obtained in this manner is justified for the following reason: \textit{Under the null hypothesis} a sample can be generated by firstly sampling $t_i$ for $i=1,\dots n$ i.i.d. and independently sampling $x_i$ for $i=1,\dots n$ i.i.d., and secondly looping through the times in order, and associating to each time a covariate that has not yet been chosen at a previous time. The original dataset and the permuted datasets are thus equal in distribution: intuitively, the permutation test checks whether the dataset looks as if, at each time, a covariate is picked uniformly from those not chosen before.

It is not obvious how to translate this to censored data. Due to censoring, it may not be true that the $i$-th event covariate is chosen uniformly from the covariates that have not had an observed event time before. For this reason survival analysis often compares the $i$--th event covariate with the covariates at risk (not failed and not censored) just before the $i$--th event. If the null hypothesis holds, then intuitively it holds that the $i$-th event covariate is chosen uniformly \textit{from the covariates at risk} at time $z_i$. That is, if $\ar$ denotes the multiset of covariates at risk and $x_i$ is the event covariate, we would like to test if $x_i$ was chosen uniformly from $\ar$.

To do so using a permutation test, our algorithm couples $\text{Uniform}(\ar)$ to a uniform choice from those that have not yet been chosen in the synthetic dataset: $\text{Uniform}(\LL)$ (see Algorithm \ref{algorithm:transformation}). In this manner, when the null hypothesis is true, (intuitively) it holds that in the synthetic dataset the covariates are chosen uniformly from those not chosen before. But this last statement is exactly our intuition behind a permutation test: namely, we use a permutation test to see if the dataset looks as if, at each time, a covariate is picked uniformly from those not chosen before. 

The intuition discussed thus far related to the workings of our transformation under the null hypothesis, and had at its core that $\text{Uniform}(\ar)$ is coupled to $\text{Uniform}(\LL)$ in Algorithm \ref{algorithm:transformation}. However there are many couplings between these two distributions. The reason we choose the coupling that solves the optimal transport problem is the following: assume the alternative hypothesis holds and that given $\ar_i$, certain covariates have a higher hazard rate than others. Then we would like this bias towards these covariates to be visible in $\tilde D$. In other words we would like $\tilde x$ to be close to $\tilde x_i$ in Algorithm \ref{algorithm:transformation}. That is precisely what the optimal transport coupling achieves. 

Finally, including the remaining covariates in the transformed dataset at the last time ensures the permuted datasets correctly reflect all alternative choices that can be made when covariates are chosen at random from those not chosen before. The association of these remaining covariates to the last event time reflects they have not been selected at each of the earlier times, which may indicate they have had less risk of having an event up to that point.  

The goal of the transformation is thus twofold: firstly, ensuring that under the null hypothesis a permutation test is appropriate on the transformed dataset, which motivates the coupling of $\text{Uniform}(\ar)$ and $\text{Uniform}(\LL)$; secondly, given the first goal, ensure covariates in $\tilde D$ associated to times $z_i$ for $\delta_i=1$ are as close as possible to the covariates associated to $z_i$ in $D$, which motivates the chosen coupling to be the coupling that solves the optimal transport problem defined in Section \ref{sec:background_optimal_transport}. 

An interesting alternative approach would be to compare $x_i$ with $\text{Uniform}(\ar_i)$ directly, without first transforming the data. We do not see, however, how to combine that approach with a permutation test, or with another procedure to test for significance. 

\section{Applying HSIC to the transformed dataset: optHSIC \label{sec:opthsic}}

We have thus far described how to transform the dataset. For the hypothesis test of $H_0:X\independent T$, we propose to apply a permutation test with test statistic DCOV/HSIC to the transformed dataset. This approach is summarized in Algorithm \ref{algorithm:opthsic}.
 
\begin{figure}
\begin{algorithm}[H]
    \SetKwInOut{Input}{Input}
    \SetKwInOut{Output}{Output}
	\Input{ Observed data $D=((x_i,z_i,\delta_i))_{i=1}^{n}$, significance level $\alpha$, number of permutations $B$.} 
Apply Algorithm \ref{algorithm:transformation} to $D$ to obtain the transformed (and uncensored) dataset $\tilde D$.\;
Apply the standard HSIC permutation test of Algorithm \ref{hsicpermutationalgorithm} to $\tilde D$ with level $\alpha$ and number of permutations $B$, resulting in a $p$-value. \;
\Output{The $p$-value resulting from the permutation test. Reject $H_0$ if $p\leq \alpha$.}   
    \caption{optHSIC} \label{algorithm:opthsic}
\end{algorithm}
\caption{The optHSIC algorithm, resulting in a $p$-value and rejection decision of $H_0:X\independent T$.}
\end{figure}

\subsection{Computational cost of optHSIC \label{sec:computational_time}}

The algorithm optHSIC consists of two parts: First, the dataset is transformed. Second, a permutation test with test statistic HSIC/DCOV is performed. The computational cost of the transformation may be high: the solution of the earth mover's distance has $n^3\log(n)$ complexity (\cite{shirdhonkar2008approximate}). Since the earth mover's distance is computed for each $i=1,\dots,n$ such that $\delta_i=1$, complexity of the transformation is excessively high. Luckily, fast approximations of the earth mover's distance exist that are linear in time (e.g. \cite{shirdhonkar2008approximate}), making the transformation $O(n^2)$. Also in the case where the covariates are 1-dimensional, the optimal transport problem has a simple solution (\cite{cohen1999finding}). Different algorithms and/or approximations also exist for the EMD with other metrics (\cite{shirdhonkar2008approximate}). The second step, computation of HSIC is also an $O(n^2)$-operation for which large-scale approximations can be made (\cite{largescaleindependence}). Furthermore, both the computation of HSIC and the permutation test computations can  be easily parallelized. In our simulations we did not use approximations of the earth mover's distance nor of HSIC, as for samples up to about 500 the optHSIC test can be performed in about a second on an ordinary PC.

\subsection{Theoretical results on optHSIC \label{sec:opthsic_theory}}
We say a test with $p$-value $p$ has correct type 1 error rate if under the null hypothesis $P_{H_0}(p\leq \alpha)\leq \alpha$. The main theoretical result we obtained for optHSIC is that the type 1 error rate is correct when $C\independent X$. Unfortunately, we have not been able to prove other important results such as (asymptotically) correct type 1 error rate when $C\not \independent X$, or consistency (power converging to 1 for each alternative hypothesis). However, as Section \ref{sec:experiments} details, extensive simulations demonstrate that optHSIC achieves correct type 1 error rate also when $C\not \independent X$. Simulations further show that optHSIC is  able to detect a wide range of dependencies between $X$ and $T$ without losing much power, relative to the CPH likelihood ratio test, even when the CPH model assumptions holds. Obtaining further theoretical results is an important future challenge. In the uncensored case a permutation test with covariance kernel of Brownian motion is consistent for distributions with compact support and has correct type 1 error rate (\cite{rindt2020consistency}). The difficulty of extending these proofs to optHSIC is that optHSIC requires sequential analysis of the optimal transport distributions, breaking independence between observations in the transformed dataset. 

We first prove an auxiliary result: namely, although we propose to permute the transformed dataset, this is equivalent to permuting the original dataset, and then transforming the permuted datasets. 

\begin{lemma} \label{lemma:permutedtransformedvstransformpermuted}
Let $\pi_1,\dots,\pi_B$ be independent uniform random permutations, and let $T,T_1,\dots,T_B$ be independent optHSIC transformations.
\begin{align*}
\left[ T(D),\pi_1(T(D)),\dots,\pi_B(T(D)) \right]  \stackrel{d}{=} \left[ T(D),T_1(\pi_1D),\dots,T_B(\pi_BD) \right]
\end{align*}
\end{lemma}
\begin{proof}
See Appendix.
\end{proof}

Note that this implies that in the definition of optHSIC we could have equally permuted $D$ first, and then applied the transformation to each of the permuted datasets. However this is computationally more expensive. Lemma \ref{lemma:permutedtransformedvstransformpermuted} enables us to show that optHSIC has correct type 1 error rate when $C\independent X$.

\begin{theorem} \label{thm:correct_type1_opt_independent_censoring} Let $D= ((X_i,Z_i,\Delta_i))_{i=1}^n$ be an i.i.d sample. Assume that $C \independent X$. Let $p$ be the $p$-value resulting from applying optHSIC (Algorithm \ref{algorithm:opthsic}) to $D$ with $B$ permutations and level $\alpha \in [0,1]$. If the null hypothesis holds, i.e. $T\independent X$,  then $p\sim \text{Uniform}[\frac{1}{B+1},\dots,\frac{B+1}{B+1}]$ and in particular it holds that
\begin{align*}
P_{H_0}\bigl(p\leq \alpha )\leq \alpha.
\end{align*}
\end{theorem}
\begin{proof}
See Appendix.
\end{proof}

Table \ref{table:overview_type_1_error} summarizes our findings of the type 1 error of optHSIC.

% Please add the following required packages to your document preamble:
% \usepackage{booktabs}
\begin{table}[H] 
\begin{tabular}{@{}llll@{}}
\toprule
                       & \begin{tabular}[c]{@{}l@{}} \textbf{Correct type}\\  \textbf{1 error rate}\\ \textbf{in theory:}\\ $P_{H_0}(p\leq \alpha)\leq \alpha.$ \end{tabular}    & \begin{tabular}[c]{@{}l@{}} \textbf{Distribution $p$} \\ \textbf{under $H_0$} \\ \\ \\\end{tabular}                                                                                                                                     & \begin{tabular}[c]{@{}l@{}} \textbf{Correct type 1} \\ \textbf{error rate}\\ \textbf{in simulations }\\ \\ \end{tabular} \\ \midrule
 \begin{tabular}[c]{@{}l@{}}$ C\independent X$ \\ \\ \end{tabular}    & \begin{tabular}[c]{@{}l@{}}Yes \\ (Theorem  \ref{thm:correct_type1_opt_independent_censoring})\end{tabular} & \begin{tabular}[c]{@{}l@{}}$\text{Uniform}[\frac{1}{B+1},\dots,\frac{B+1}{B+1}]$\\ (Theorem \ref{thm:correct_type1_opt_independent_censoring})\end{tabular}                                                                                   &  \begin{tabular}[c]{@{}l@{}} Yes \\ (Sec \ref{sec:type1error}) \\ \end{tabular}                                                                                     \\ \midrule
\begin{tabular}[c]{@{}l@{}}$ C \not \independent X$ \\ \\ \\ \\  \end{tabular} & \begin{tabular}[c]{@{}l@{}} Unknown \\ \\ \\ \\  \end{tabular}                                                                                                                                                                 & \begin{tabular}[c]{@{}l@{}}Unkown, but experiments \\ suggest approximately\\  $\text{Uniform}[\frac{1}{B+1},\dots,\frac{B+1}{B+1}]$ \\ (Figure \ref{fig:lin_censoring_uniform_pvalues} Appendix)\end{tabular} & \begin{tabular}[c]{@{}l@{}} Yes \\ (Sec \ref{sec:type1error}) \\ \\ \\  \end{tabular}                                                                                     \\ \bottomrule
\end{tabular}
\caption{An overview of the type 1 error results obtained for optHSIC in the case when $C\independent X$ and when $C\not \independent X$. $B$ denotes the number of permutations. The $p$-value is defined in Algorithm  \ref{algorithm:opthsic}. \label{table:overview_type_1_error}}
\end{table}

\subsection{Using multiple transformations \label{sec:multiple_transformations}}

Algorithm \ref{algorithm:transformation} defines the transformation used in optHSIC. In line 7, a covariate is sampled from the multiset $\LL$ with distribution $v$. The sampled element may differ for different iterations of the algorithm. Consequently, given a fixed dataset $D$, the transformed dataset will look different for different iterations of the algorithm. The optHSIC algorithm proposes to use only one transformation, resulting in a single $p$-value. 

Say that instead of applying the optHSIC algorithm once, we repeat the optHSIC algorithm $m$ times, resulting in $m$ $p$-values $p_1,\dots,p_m$. An example of the distribution of $p$-values for a fixed dataset is given in Figure \ref{fig:dist_p_values_given_data}.

\begin{figure}[H]
\centering
\begin{subfigure}{.8\textwidth}
  \centering
\includegraphics[width=1\linewidth]{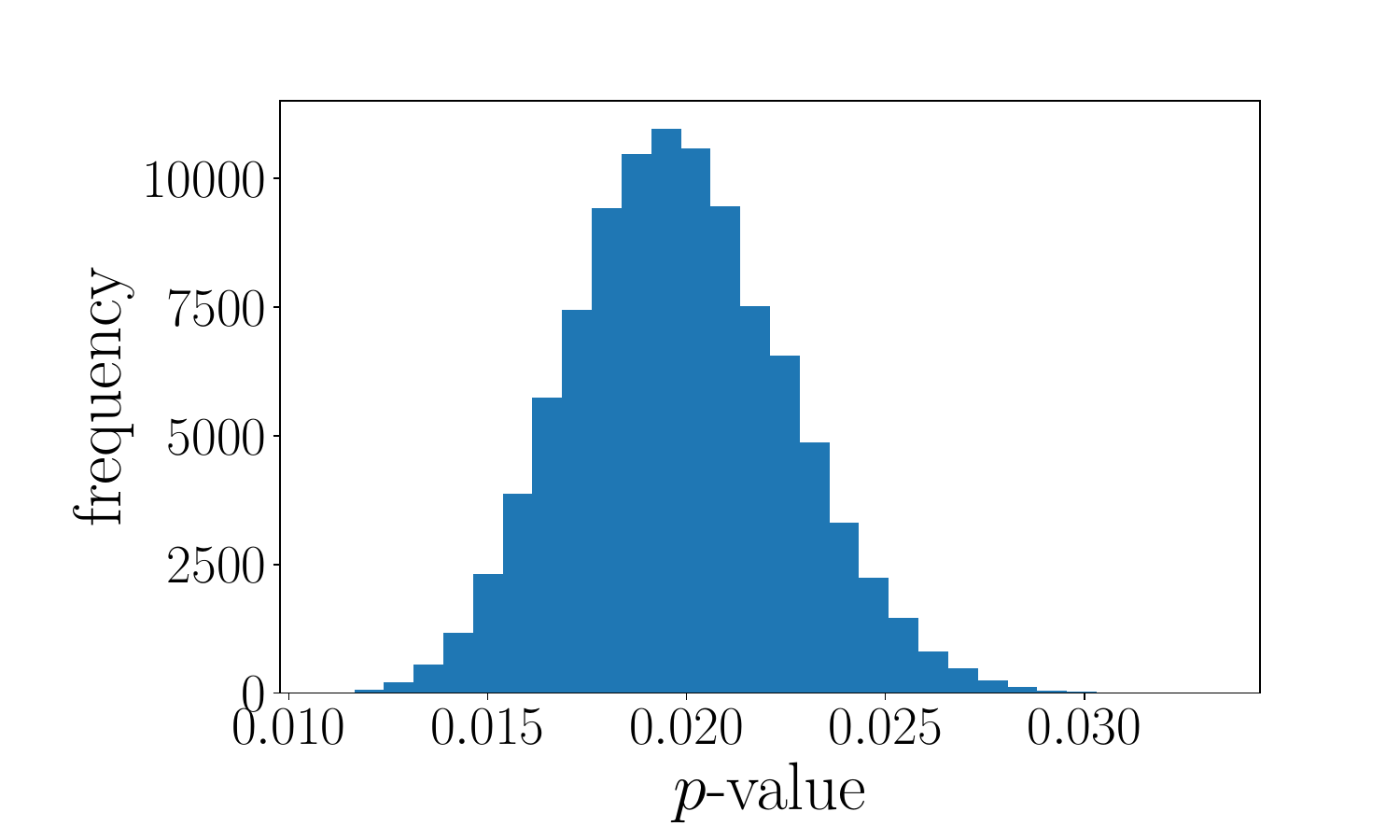}
\end{subfigure}%
\caption{\label{fig:dist_p_values_given_data} A histogram of the $p$-values obtained from optHSIC for a fixed dataset $D$ sampled from D.1 of Table \ref{table:scenarios_power}. The sample was of size $n=300$ and $61\%$ of the individuals was observed $(\delta=1)$. The variability in $p$ comes from the fact that the transformation in optHSIC is random. }
\end{figure}

Figure \ref{fig:dist_p_values_given_data} shows that in the used dataset, where $39\%$ of observations is censored and $n=300$, the variance in the obtained $p$-values is not excessively high, and in particular all $p$-values would have led to the decision to reject the null-hypothesis. 

We also studied several techniques one may use to combine $p$-values, typically at the cost of being more conservative when there is little variance among $p$-values, like in the example above. This is discussed in Section \ref{sec:appendix:multiple_transformations}.

\section{Alternative approaches when censoring is independent of the covariate \label{sec:alternative_methods_when_censoring_independent}}
We are not aware of fully nonparametric methods to test independence between right-censored times and continuous covariates. When $C$ is independent of $X$ the challenge is mitigated, since in that case any statistic can be combined with a permutation test that permutes the covariates (see Theorem \ref{thm:permutation_test_if_independent_censoring}). Using that approach, this Section proposes some alternative tests in the case when $C\independent X$ that will serve as benchmarks for studying power performance of optHSIC in addition to the Cox proportional hazards likelihood ratio test. In Section \ref{sec:experiments} we compare the performance of these methods with optHSIC.\\ 
Before we propose alternative test statistics, we state formally why standard permutation tests can be used when $C \independent X$. We first state a Theorem on the correctness of the type 1 error rate of permutation tests without censoring. We include the proof following \cite{berrettinformation}  in the Appendix for completeness.
 \begin{theorem}\label{thm:standard_permutation_test} Let $D= (X_i,Y_i) _{i=1}^n$ be an i.i.d. sample of size $n$ from distribution $P_{XY}$ on $\mathcal X \times \mathcal Y$. Denote $\pi D =(X_{\pi(i)},Y_i) _{i=1}^n$. Let $\pi_1,\dots\pi_B$ be permutations sampled uniformly and independently from $S_n$. Let $H$ be any statistic of the data. Let $R$ be the rank of the first coordinate in the vector:
\begin{equation*}
\bigl(H(D),H(\pi_1D),\dots,H(\pi_BD) \bigr)
\end{equation*}
when ties are broken at random and where $R=1$ is the rank of the largest element and $R=B+1$ the rank of the smallest element.  Define $p\coloneqq R/(B+1)$. Then under the null hypothesis $H_0:X \independent T$ it holds that: $p\sim\text{Uniform}[\frac{1}{B+1},\dots,\frac{B+1}{B+1}]$. So in particular for $\alpha \in [0,1]$
\begin{align*}
P_{H_0}(p\leq \alpha )\leq \alpha.
\end{align*}
\end{theorem}
\begin{proof}
See Appendix.
\end{proof}

In survival analysis we apply the above, setting $Y=(Z,\Delta)$, where $Z=\min\{T,C \}$ and $\Delta=1\{T\leq C\}$.
\begin{theorem}\label{thm:permutation_test_if_independent_censoring}
	Assume $C \independent X$ and $T \independent C \vert X$. Write $D=(X_i,Z_i,\Delta_i) _{i=1}^n$, 
	and let $\pi_1,\dots,\pi_B$ be sampled i.i.d.\ uniformly from $S_n$. Write
	$\pi D:=(X_{\pi(i)},Z_i,\Delta_i)_{i=1}^n$. Let $H(D)$ be any statistic of the data. Let $R$ be the rank of the first coordinate in the vector:
\begin{equation*}
\bigl(H(D),H(\pi_1D),\dots,H(\pi_BD) \bigr)
\end{equation*}
where ties are broken at random and $R=1$ is the rank of the largest element and $R=B+1$ is the rank of the smallest element. Define $p\coloneqq R/(B+1)$. Then under the null hypothesis $H_0:X \independent T$ it holds that $p\sim \text{Uniform}[\frac{1}{B+1},\dots,\frac{B+1}{B+1}]$. So in particular for $\alpha \in [0,1]$
\begin{align*}
P_{H_0}(p\leq \alpha )\leq \alpha.
\end{align*}
\end{theorem}
\begin{proof}
See Appendix.
\end{proof}

Having shown that we can use a permutation test on any test statistic when $C \independent X$, the next two sections explore meaningful measures of dependency on right-censored data. Indeed such methods may lead to type 1 errors when $C\not \independent X$.

\subsection{wHSIC}\label{sectionwhsic}

HSIC relies on estimating the mean embedding of the joint distribution $P_{XT}$. The estimated embedding is then compared to the embedding of the product of marginal distributions. In the uncensored case the empirical distribution equals
$
 \sum_{i=1}^n \delta_{(x_i,t_i)}/n
$
which has corresponding mean embedding
$
\sum_{i=1}^n K((x_i,t_i),\cdot)/n
$.

Since we do not observe the $t_i$'s we could consider replacing the empirical distribution by a weighted version $\sum_{i=1}^n w_i \delta_{(x_i,z_i)}/n$
where we try to find weights $w_i$ such that
$\sum_{i=1}^n w_i f(x_i,z_i) \approx Ef(X,T)
$ for every measurable function $f$ such that the expectation exists. A natural idea is to give an observed point $(x,z,\delta)$ a weight of zero if it is censored, and a weight equal to the inverse probability of being uncensored otherwise. This can be motivated by the following lemma, that applies also if $C \not \independent X$. We first define
$$g(t,x)= P(C>t|X=x).$$
The following lemma proposes a weight function $W$ in terms of $g$.
\begin{lemma} \label{lemmaexpectationweight} Let $f$ be an $(X,T)$- measurable function such that $Ef(X,T)< \infty$ and define $W$ by
\begin{equation*}W= \begin{cases}  0 & \text{if}  \ \Delta=0 \\ \frac{1}{g(Z,X)} & \text{if}  \ \Delta=1. \end{cases} \end{equation*} Then  $E Wf(X,Z)=Ef(X,T).$
\end{lemma}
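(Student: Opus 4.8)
The plan is to condition on the pair $(X,T)$ and invoke Assumption 2 ($C\independent T\mid X$) to evaluate the resulting conditional expectation. First I would note that on the event $\{\Delta=1\}$ one has $Z=\min\{T,C\}=T$, so that, adopting the convention $0\cdot\infty=0$,
\[
Wf(X,Z)=\frac{1\{\Delta=1\}}{g(X,Z)}\,f(X,Z)=\frac{1\{C>T\}}{g(X,T)}\,f(X,T).
\]
Hence it suffices to prove $E\bigl[g(X,T)^{-1}\,1\{C>T\}\,f(X,T)\bigr]=E[f(X,T)]$. Running the argument below first with $|f|$ in place of $f$ shows, together with the hypothesis $E|f(X,T)|<\infty$, that every expectation that appears is finite, so the manipulations are legitimate.

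The key step is the tower property applied with the conditioning $\sigma$-algebra generated by $(X,T)$:
\[
E\!\left[\frac{1\{C>T\}}{g(X,T)}\,f(X,T)\right]
=E\!\left[\frac{f(X,T)}{g(X,T)}\;E\bigl[1\{C>T\}\mid X,T\bigr]\right].
\]
By Assumption 2 the conditional law of $C$ given $(X,T)=(x,t)$ equals its conditional law given $X=x$, so
\[
E\bigl[1\{C>T\}\mid X=x,\,T=t\bigr]=P(C>t\mid X=x)=g(x,t).
\]
Note that the strict inequality in $\Delta=1\{C>T\}$ matches the strict inequality in the definition of $g$, so no continuity assumption on $C$ is needed here. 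Substituting back, the factor $g(X,T)$ in the numerator cancels the one in the denominator and we are left with $E[f(X,T)]$, which proves the claim.

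The one delicate point — and the main obstacle — is the set $N=\{g(X,T)=0\}$, on which $W$ is formally undefined. On $N$ one has $P(C>T\mid X,T)=0$, hence $1\{C>T\}=0$ almost surely on $N$, so $N$ contributes nothing to the left-hand side $E[Wf(X,Z)]$; matching this with the right-hand side therefore requires the positivity condition $P(N)=0$, i.e.\ $P(C>t\mid X=x)>0$ for all $(x,t)$ in the support of $(X,T)$. This is the standard support/positivity assumption underlying inverse-probability-of-censoring weighting, and I would either add it explicitly to the hypotheses of the lemma or note that it is already in force via the modelling assumptions; granted this, the cancellation above is valid almost everywhere and the proof is complete.
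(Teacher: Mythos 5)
Your proof is correct and takes essentially the same route as the paper's: on $\{\Delta=1\}$ one has $Z=T$, and conditioning on $(X,T)$ together with Assumption~2 gives $E[1\{C>T\}\mid X,T]=g(X,T)$, which cancels the weight. Your observation that the identity implicitly requires the positivity condition $g(X,T)>0$ almost surely (standard for inverse-probability-of-censoring weighting) is a fair point that the lemma's statement leaves tacit, but it does not change the argument.
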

\begin{proof}
See Appendix. \end{proof}

We would thus like to use weights $ w_i= 0 \ \text{if} \ \delta_i=0$ and $ 1/{ng(x_i,z_i)} \ \text{if}  \ \delta=1.$
As we will be working under the assumption $C\independent X$ we may estimate ${1/P(C>t \vert X)}={1/P(C>t)}$ using Kaplan--Meier weights, which we define now. Assume that there are no ties in the data and that $z_i<z_{i+1}$ for $i=1,\dots,n$. Then the Kaplan--Meier survival curve is given by:
\begin{align*}
\hat S_T(z_k)=\prod_{i=1}^k \left( \frac{n-i}{n-i+1} \right)^{\delta_i}.
\end{align*}
Kaplan--Meier weights are then defined by
\begin{align*}
w_k=\hat P(T=z_k) & = \hat S_T(z_{k-1})-\hat S_T(z_k).% \\ & =\prod_{i=1}^{k-1} \left( \frac{n-i}{n-i+1} \right)^{\delta_i} \left( 1- \frac{n-k}{n-k+1} \right)^{\delta_k} \\ & = \prod_{i=1}^{k-1} \left( \frac{n-i}{n-i+1} \right)^{\delta_i} \left( \frac{1}{n-k+1} \right)^{\delta_k}.
\end{align*}
One can also estimate the survival probability of the censoring time using Kaplan--Meier weights (simply replace $\delta_i$ by $1-\delta_i$ in the above formula). The following lemma shows that the  weights $w_k$ defined above, correspond up to a constant of $1/n$, to the inverse of the estimated probability of being uncensored.
\begin{lemma} \label{lemmakaplanmeier}
Let $ \hat S_C(z_k)=\hat P(C > z_k)$ denote the Kaplan--Meier estimate of the survival probability of the censoring distribution. Then:

\begin{align*}
w_k=\frac{1}{n} \frac{1}{\hat P(C > z_k)}
\end{align*}
\end{lemma}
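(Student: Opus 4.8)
The plan is to write the Kaplan--Meier estimator of the censoring survival function explicitly, using the same at-risk counts as the failure curve, and then exploit a telescoping identity. Since the paper assumes no ties and $z_1 < z_2 < \dots < z_n$, exactly $i-1$ of the $n$ individuals have left the risk set by time $z_i$, so the number at risk just before $z_i$ equals $n-i+1$. Treating the censored observations ($\delta_i=0$) as the ``events'' for the censoring distribution and the observed failures ($\delta_i=1$) as ``censored'', the Kaplan--Meier estimate of $P(C>z_k)$ is
\[
\hat P(C > z_k) = \prod_{i=1}^{k}\left(\frac{n-i}{n-i+1}\right)^{1-\delta_i}.
\]
I would restrict attention to indices $k$ with $\delta_k = 1$, since these are the only points carrying nonzero weight in wHSIC; for such $k$ the $i=k$ factor above equals $1$, so $\hat P(C > z_k) = \prod_{i=1}^{k-1}\left(\frac{n-i}{n-i+1}\right)^{1-\delta_i}$.

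Next I would multiply this by the failure Kaplan--Meier value just before $z_k$, namely $\hat S(z_{k-1}) = \prod_{i=1}^{k-1}\left(\frac{n-i}{n-i+1}\right)^{\delta_i}$. Adding the exponents $\delta_i$ and $1-\delta_i$ removes the indicator and leaves the telescoping product $\prod_{i=1}^{k-1}\frac{n-i}{n-i+1} = \frac{n-k+1}{n}$. Hence $\hat S(z_{k-1})\,\hat P(C > z_k) = \frac{n-k+1}{n}$, i.e.\ $\hat S(z_{k-1}) = \frac{n-k+1}{n\,\hat P(C > z_k)}$.

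Finally I would substitute this into the weight formula for an observed failure derived just above the lemma, $w_k = \hat S(z_{k-1})\left(\frac{1}{n-k+1}\right)^{\delta_k} = \hat S(z_{k-1})\cdot\frac{1}{n-k+1}$ (using $\delta_k = 1$), which gives $w_k = \frac{n-k+1}{n\,\hat P(C>z_k)}\cdot\frac{1}{n-k+1} = \frac{1}{n\,\hat P(C>z_k)}$, as claimed.

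The computation is short; the only points requiring care are (i) setting up the censoring Kaplan--Meier estimator with the correct at-risk counts and with $\delta_i$ replaced by $1-\delta_i$, and (ii) the evaluation convention at a jump: at an observed failure time $z_k$ the censoring curve does not jump, so $\hat P(C>z_k)$ agrees with its left limit, which is exactly what makes the telescoping product line up. These are really the only ``obstacles'', and they are bookkeeping rather than mathematical.
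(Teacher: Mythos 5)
Your proof is correct and follows essentially the same route as the paper's: write the censoring Kaplan--Meier estimator with exponents $1-\delta_i$ and the same at-risk counts $n-i+1$, use that the censoring curve has no jump at an observed failure, and let the product $\prod_{i=1}^{k-1}\frac{n-i}{n-i+1}=\frac{n-k+1}{n}$ telescope against $\hat S(z_{k-1})$ before substituting into the weight formula. Restricting to $\delta_k=1$ is the right reading of the lemma, since for censored indices the weight is $0$ and the identity is not meant to apply.
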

\begin{proof}
See Appendix.
\end{proof}
%\subsection{wHSIC}\label{subsectionwhsic}

%Let $w_i$ be the Kaplan--Meier weights defined in the previous section. 
We use these weights to define the statistic wHSIC as the RKHS distance between the embedding of 
$ \sum_{i=1}^n w_i \delta_{(x_i,z_i)}
 $
and the embedding of the product of the marginals,
$ \sum_{i=1}^n\sum_{j=1}^n w_iw_j\delta_{x_i}\delta_{z_j}.
$ 
That is:
%\begin{align*}
%\text{wHSIC}(D) \coloneqq \bigg \vert  \bigg \vert \sum_{i=1}^{n}w_{i}k(\cdot,x_{i})\otimes l(\cdot,z_{i})-\left(\sum_{i=1}^{n}w_i k\left(\cdot,x_{i}\right)\right)\otimes\left(\sum_{i=1}^{n}w_{i}l\left(\cdot,z_{i}\right)\right) \bigg \vert \bigg \vert ^2 _{H}.
%\end{align*}
\begin{align*}
\text{wHSIC}(D) \coloneqq \bigg \vert  \bigg \vert \sum_{i=1}^{n}w_{i}K\bigl((x_i,z_i),\cdot \bigr)-\sum_{i=1}^{n}\sum_{j=1}^n w_i w_j K \bigl( (x_i,y_j),\cdot \bigr) \bigg \vert \bigg \vert ^2 _{H}.
\end{align*}
Here $K\bigl((x,y),(x',y')\bigr)=k(x,x')l(y,y')$. The following theorem shows how to compute $\whsic$ efficiently.
\begin{theorem}{ (Computation of wHSIC)}\label{fastcomputationhsic}
Given a dataset $D=\left( (x_i,y_i) \right)_{i=1}^n$ with a weight vector $w\in \mathbb R^n$,
\begin{equation*}
\whsic(D)=\tr\left(H_{w}KH_{w}L\right) ,
\end{equation*}
where $K_{ij}=k(x_i,x_j)$ and $L_{ij}=l(y_i,y_j)$ and $H_w=D_w-ww^{\top}$, where $D_w=\text{diag}(w)$, a diagonal matrix with $(D_w)_{ii}=w_i$.
\end{theorem}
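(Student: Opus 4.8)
The plan is to expand both sides into explicit sums over the sample indices and match them term by term; no analytic input is needed beyond the reproducing property of $K$ and a few elementary trace identities.

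First I would unfold the squared RKHS norm defining $\whsic(D)$. Abbreviate $a := \sum_{i=1}^n w_i K\bigl((x_i,y_i),\cdot\bigr)$ and $b := \sum_{i,j=1}^n w_i w_j K\bigl((x_i,y_j),\cdot\bigr)$, so that $\whsic(D) = \|a-b\|_H^2 = \langle a,a\rangle_H - 2\langle a,b\rangle_H + \langle b,b\rangle_H$. Using bilinearity of the inner product and the reproducing property $\langle K((x,y),\cdot), K((x',y'),\cdot)\rangle_H = k(x,x')\,l(y,y')$, together with the shorthand $K_{ij}=k(x_i,x_j)$, $L_{ij}=l(y_i,y_j)$, this produces
\begin{align*}
\langle a,a\rangle_H &= \sum_{i,j} w_i w_j K_{ij} L_{ij}, \\
\langle a,b\rangle_H &= \sum_{i,p,q} w_i w_p w_q K_{ip} L_{iq}, \\
\langle b,b\rangle_H &= \sum_{i,j,p,q} w_i w_j w_p w_q K_{ip} L_{jq}.
\end{align*}

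Second I would expand $\tr(H_w K H_w L)$ with $H_w = D_w - ww^{\top}$. Multiplying out the product gives four traces, $\tr(D_w K D_w L) - \tr(D_w K ww^{\top} L) - \tr(ww^{\top} K D_w L) + \tr(ww^{\top} K ww^{\top} L)$, and each is evaluated by the same elementary moves: $(D_w M D_w)_{ij} = w_i M_{ij} w_j$, cyclicity of the trace, $\tr(uv^{\top}) = v^{\top}u$, and symmetry of $K$ and $L$. One finds $\tr(D_w K D_w L) = \langle a,a\rangle_H$, $\tr(ww^{\top}K ww^{\top}L) = (w^{\top}Kw)(w^{\top}Lw) = \langle b,b\rangle_H$, and each of the two mixed traces equals $\langle a,b\rangle_H$. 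Summing the four contributions yields $\langle a,a\rangle_H - 2\langle a,b\rangle_H + \langle b,b\rangle_H = \whsic(D)$, which is the claim.

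The argument is purely mechanical, so there is no genuine obstacle; the one spot that demands a little care is checking that the two mixed traces $\tr(D_w K ww^{\top}L)$ and $\tr(ww^{\top}K D_w L)$ are actually equal, so that together they supply the factor $-2$. This follows after cycling the trace and relabelling the index that is contracted against both $K$ and $L$, using $K^{\top}=K$ and $L^{\top}=L$; the symmetry of the two kernels is exactly what makes the cross term come out right.
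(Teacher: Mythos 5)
Your proposal is correct: expanding the squared RKHS norm via the reproducing property and matching it against the four traces obtained from $H_w = D_w - ww^{\top}$ is exactly the computation the paper's appendix proof carries out, including the use of the symmetry of $K$ and $L$ to merge the two mixed terms into the factor $-2\langle a,b\rangle_H$. No gaps; this is essentially the same argument.
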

\begin{proof}
See Appendix.
\end{proof}
It is not difficult to see wHSIC has the same computational time as (uncensored) HSIC. \\
%The products $H_w K$ and $H_w L$ can be computed in $O(n^2)$ time, as $H_w$ is a diagonal matrix plus a rank one matrix. Finally, the trace of the product is equal to the sum of all the elements of the entry-wise product of $H_wK$ and $(H_wL)^T$. So in total the weighted HSIC can be computed, like the standard HSIC, in $O(n^2)$ time. To implement a permutation test effectively, note that the weights and the times they correspond to are not affected by permutation. Hence the matrices $ww^{\top}$ and $L$ only need to be computed once, and we only need to permute the entries of the matrix $K$. We use the Brownian distance kernels as discussed in Section \ref{sec:background_rkhs}. \\

\begin{figure}
\begin{algorithm}[H]
    \SetKwInOut{Input}{Input}
    \SetKwInOut{Output}{Output}
	\Input{  $D=((x_i,z_i,\delta_i))_{i=1}^{n}$, significance level $\alpha$, number of permutations $B$. } 
Sample permutations $\pi_1,\dots,\pi_B$ i.i.d. uniformly from $S_n$.\;
Breaking ties at random, compute the rank $R$ of $\whsic(D)$ in the vector $$\left(\whsic(D),\whsic (\pi_1 D),\whsic (\pi_2 D),\dots,\whsic (\pi_{B} D) \right)$$
where $R=1$ is the rank of the largest element. \;
\Output{The $p$-value $p\coloneqq R/(B+1)$. Reject $H_0$ if $p\leq \alpha$. }
    \caption{wHSIC}
\end{algorithm}
\caption{The wHSIC algorithm, resulting in a $p$-value and rejection decision of $H_0:X\independent T$.}
\end{figure}

\subsection{zHSIC}\label{sectionzhsic}

If $C \independent X$, then any dependence between $X$ and $Z$ must be due to dependence between $X$ and $T$. Hence we may estimate $\hsic(X,Z)$ to measure the strength of the dependence. This test is, indeed, expected to yield false rejections if  $C \independent X$ fails to hold, and to lack power when a large portion of the events is censored. We include this test mainly to see how the power of optHSIC and wHSIC compare with this approach. \\

\begin{figure}
\begin{algorithm}[H]
    \SetKwInOut{Input}{Input}
    \SetKwInOut{Output}{Output}
	\Input{  $ D=((x_i,z_i,\delta_i))_{i=1}^{n}$, significance level $\alpha$, number of permutations $B$.}
Denote  $\tilde D=((x_i,z_i))_{i=1}^n$ the sample as if there was no censoring. Sample permutations $\pi_1,\dots,\pi_B$ i.i.d. uniformly from $S_n$.\;
Breaking ties at random, compute the rank $R$ of $\hsic(D)$ in the vector $$\left(\hsic(\tilde D),\hsic (\pi_1 \tilde D),\hsic (\pi_2 \tilde D),\dots,\hsic (\pi_{B} \tilde D) \right)$$
where $R=1$ is the rank of the largest element. \;

	\Output{The $p$-value $p\coloneqq R/(B+1)$. Reject $H_0$ if $p\leq \alpha$.}
    \caption{zHSIC}
\end{algorithm}
\caption{The zHSIC algorithm, resulting in a $p$-value and rejection decision of $H_0:X\independent T$.}
\end{figure}

\section{Numerical evaluation of the methods\label{sec:experiments}}

We generate data from various distributions of $X$, $T$ and $C$ to compare the power and type 1 error rate of optHSIC, wHSIC, zHSIC and CPH. CPH stands for the Cox proportional hazards likelihood ratio test. In each scenario, we let the sample size range from $n=40$ to $n=400$ in intervals of $40$. To obtain $p$-values in the three HSIC-based methods we use a permutation test with 1999 permutations. We reject the null hypothesis if our obtained $p$-value is less than $0.05$. In the main paper, we present results of rejection rates under distributions that are chosen such that approximately $60\%$ of the observations are observed ($\delta=1$). We investigate the rejection rates under varying censoring regimes in Section \ref{sec:appendix:varying_censoring} of the Appendix. 

\subsection{Type 1 error rate \label{sec:type1error}}
We begin by investigating the type 1 error rate. The distributions in which we test the type 1 error rate are found in Table \ref{table:scenarios_type1error}. In these distributions the null hypothesis holds, i.e. $X\independent T$ and we consider both cases where $C\independent X$ and $C\not \independent X$, as well as the case of multidimensional covariates. We estimate the rejection rates by sampling 5000 times from each distribution for each sample size and applying the different tests to the samples. The obtained rejection rates of optHSIC are found in Table \ref{table:type1error_opthsic}. The type 1 error rate of the remaining methods is found in Section \ref{sec:appendix:type1error}. The most important finding is that optHSIC is found to have correct type 1 error rate of $\alpha=0.05$ both when $C \independent X$ and when $C\not \independent X$. In contrast, wHSIC and zHSIC yield many false rejections when $C \not \independent X$, as expected, but have the correct type 1 error rate when $C\independent X$. Investigation of the $p$-values of optHSIC furthermore showed that $p$-values are distributed approximately according to $\text{Uniform}[\frac{1}{B+1},\dots,\frac{B+1}{B+1}]$ under the null hypothesis, even when $C\not \independent X$ (see Figure \ref{fig:lin_censoring_uniform_pvalues} in Section \ref{sec:appendix:type1error}).

\begin{table}[H]
\begin{center}
    \begin{tabular}{  l  l  l  l  l   p{5cm} }
 
    D. &  X & $ T \vert X$ & $C \vert X$ & $\% \ \delta=1$ \\ \hline
    1   & $ \text{Unif}[-1,1]$ & $ \text{Unif}[0,1]$ & $ \text{Unif}[0,1.5]$ & $66\%$\\ 
	    2  &$ \text{Unif}[-1,1]$ & $\text{Exp}(\text{mean}=5/2)$ &  $\text{Exp}(\text{mean}=5/3) $ & $40 \%$ \\ 
    3  & $ \text{Unif}[-1,1]$  &  $  \text{Exp}(\text{mean}=2/3) $ & $\text{Exp}(\text{mean}=\exp(X))$ & $60\%$   \\ 
    4  & $ \text{Unif}[-1,1]$  &  $  \text{Exp}(\text{mean}=1.6) $ & $\text{Exp}(\text{mean}=\exp(9X^2))$  & $60\%$  \\ 
    5  & $ \text{Unif}[-1,1]$   &  $  \text{Exp}(\text{mean}=0.9) $ & $\text{Weib}(\text{shape}=1.75X+3.25)$  & $60\%$ \\ 
            6  & $ \text{Unif}[-1,1]$ & $\text{Exp}(\text{mean}=0.9)$  & $1+X$  & $60\%$ \\ 
                7  & $ N_{10}(0,\Sigma)$  &$\text{Exp}(\text{mean}=0.6)$   & $\text{Exp}(\text{mean}=\exp(1^TX))$ & $60\%$ \\ 
                    8  & $N_{10}(0,\Sigma)$  & $\text{Exp}(\text{mean}=0.6)$   &  $\text{Exp}(\text{mean}=\exp(X_1/8))$ & $60\%$  \\ 
    \end{tabular}\\
\end{center}    
\caption{The 8 scenarios in which we simulate the type 1 error rate. $N_{10}(0,\Sigma)$ is a 10-dimensional Gaussian random variable, with $0$ denoting a vector of zeros of length 10, and $\Sigma=MM^T$ where $M$ is a $10\times 10$ matrix of i.i.d $N(0,1)$ entries. Note that $C$ depends on $X$ in D.3-8. \label{table:scenarios_type1error}}
\end{table}
\begin{table}[H]
\resizebox{\textwidth}{!}{
\begin{tabular}{@{}lllllllllll@{}}
\toprule
 $n=$   & $40$ & $80$ & $120$ & $160$ & $200$ & $240$ & $280$ & $320$ & $360$ & $400$ \\ \midrule
D.1 &0.047 & 0.053 & 0.051 & 0.050 & 0.054 & 0.051 & 0.045 & 0.056 & 0.049 & 0.048 \\ D.2 &
0.049 & 0.051 & 0.057 & 0.044 & 0.053 & 0.050 & 0.047 & 0.049 & 0.052 & 0.048 \\ D.3 &
0.052 & 0.052 & 0.048 & 0.049 & 0.044 & 0.050 & 0.048 & 0.054 & 0.054 & 0.048 \\ D.4 &
0.050 & 0.054 & 0.049 & 0.054 & 0.054 & 0.054 & 0.050 & 0.053 & 0.050 & 0.046 \\ D.5 &
0.050 & 0.056 & 0.056 & 0.051 & 0.053 & 0.051 & 0.046 & 0.055 & 0.050 & 0.049 \\ D.6 &
0.050 & 0.049 & 0.048 & 0.047 & 0.048 & 0.057 & 0.050 & 0.050 & 0.048 & 0.050 \\ D.7 &
0.050 & 0.055 & 0.047 & 0.048 & 0.056 & 0.054 & 0.054 & 0.052 & 0.051 & 0.048 \\ D.8 &
0.041 & 0.050 & 0.051 & 0.048 & 0.056 & 0.049 & 0.054 & 0.050 & 0.054 & 0.052 \\ \bottomrule
\end{tabular}}
\caption{The rejection rate of optHSIC for the distributions D.1-8 of Table \ref{table:scenarios_type1error}. Note the type 1 error rate is very close to the level $\alpha=0.05$ as desired, also in the scenarios where $C\not \independent X$. \label{table:type1error_opthsic}}
\end{table}

\subsection{Comparison of power}\label{sec:power}

To compare the power of the four methods we consider a number of distributions in which $T\not \independent X$. For each distribution we let the sample size range from $n=40$ to $n=400$ in steps of $40$. At each sample size we take 1000 samples to estimate the rejection rate. In these distributions censoring is independent of the covariate such that the methods wHSIC and zHSIC do not have inflated rejection rate due to dependencies between $C$ and $X$. Distributions with varying censoring rates and dependent censoring are investigated in Section \ref{sec:appendix:varying_censoring}. Parameters are chosen so that that $60\%$ of the observations is uncensored ($60\%$ has $\delta_i=1$), and rejection rates take a range of values (i.e. to exclude trivial distributions so that each method rejects with probability 1 for each sample size.). 

\begin{table}[H]
\begin{center}
    \begin{tabular}{  l  l  l  l  l   p{5cm} }
 
    D. &  X & $ T \vert X$ & $C \vert X$  \\ \hline
    1   & $  N(0,1) $ & $ \text{Exp}(\text{mean}=\exp(X/6)) $  & $ \text{Exp}(\text{mean}=1.5) $ \\ 
	    2  & $ N(0,1) $ & $\text{Exp}(\text{mean}=\exp(X^2/5)$ &  $\text{Exp}(\text{mean}=2.25) $ \\ 
    3  & $ \text{Unif}[-1,1]$  & $  \text{Weib}(\text{shape}=1.75X+3.25)$  &  $  \text{Exp}(\text{mean}=1.75) $ \\ 
    4  & $ \text{Unif}[-1,1]$  & $N(100-X,2X+5.5)$  &  $ 82+\text{Exp}(\text{mean}=35)$ \\ 
        5  & $ N_{10}(0,\Sigma)$  & $\text{Exp}(\text{mean}=\exp(1^T X/30))$  &  $ \text{Exp}(\text{mean}=1.5)$ \\ 
            6  & $ N_{10}(0,\Sigma)$  & $\text{Exp}(\text{mean}=\exp(X_1/8))$  & $\text{Exp}(\text{mean}=1.5)$  \\ 
                7  & $ N_{10}(0,\Sigma)$  &$\text{Exp}(\text{mean}=\exp(X^2_1/4)$   & $\text{Exp}(\text{mean}=10)$ \\ 
                    8  & $N_{10}(0,\Sigma)$  & $\text{Exp}(\text{mean}=\exp(X^2_1/4+X_2/7))$   &  $\text{Exp}(\text{mean}=8)$ \\ 
    \end{tabular}\\
\end{center}    
\caption{The 8 scenarios in which we test the power in the main text.  $N_{10}(0,\Sigma)$ is a 10-dimensional Gaussian random variable, with $0$ denoting a vector of zeros of length 10, and $\Sigma=MM^T$ where $M$ is a $10\times 10$ matrix of i.i.d $N(0,1)$ entries. In these distributions about $60\%$ of the individuals is observed. Distributions with dependent censoring and varying censoring percentages are discussed in Section \ref{sec:appendix:varying_censoring}.\label{table:scenarios_power}}
\end{table}

\subsubsection{Power for 1-dimensional covariates}

In distributions 1-4 of Table \ref{table:scenarios_power} the covariate is 1-dimensional. Scatterplots of the samples and rejection rates are displayed in Figure \ref{fig:power_1dimensional}. Note how in D.1 the CPH assumption holds, so the CPH method suits this example very well. We find however that the rejection rate of optHSIC is very similar to that of the CPH likelihood ratio test (first row, right of Figure \ref{fig:power_1dimensional}). D.2 features a case in which hazard is highest in the middle, and lower for extreme values of the covariate. The CPH likelihood ratio test does not have power to detect this relationship, and optHSIC is the most powerful method. In D.3 and D.4 the hazard functions of different covariates cross each other. In this case, wHSIC is the top-performing method, but optHSIC is also able to detect the more complicated relationship, while CPH is not able to do so.

\begin{figure}[H]
\centering
\begin{subfigure}{.5\textwidth}
  \centering
\includegraphics[width=1.1\linewidth]{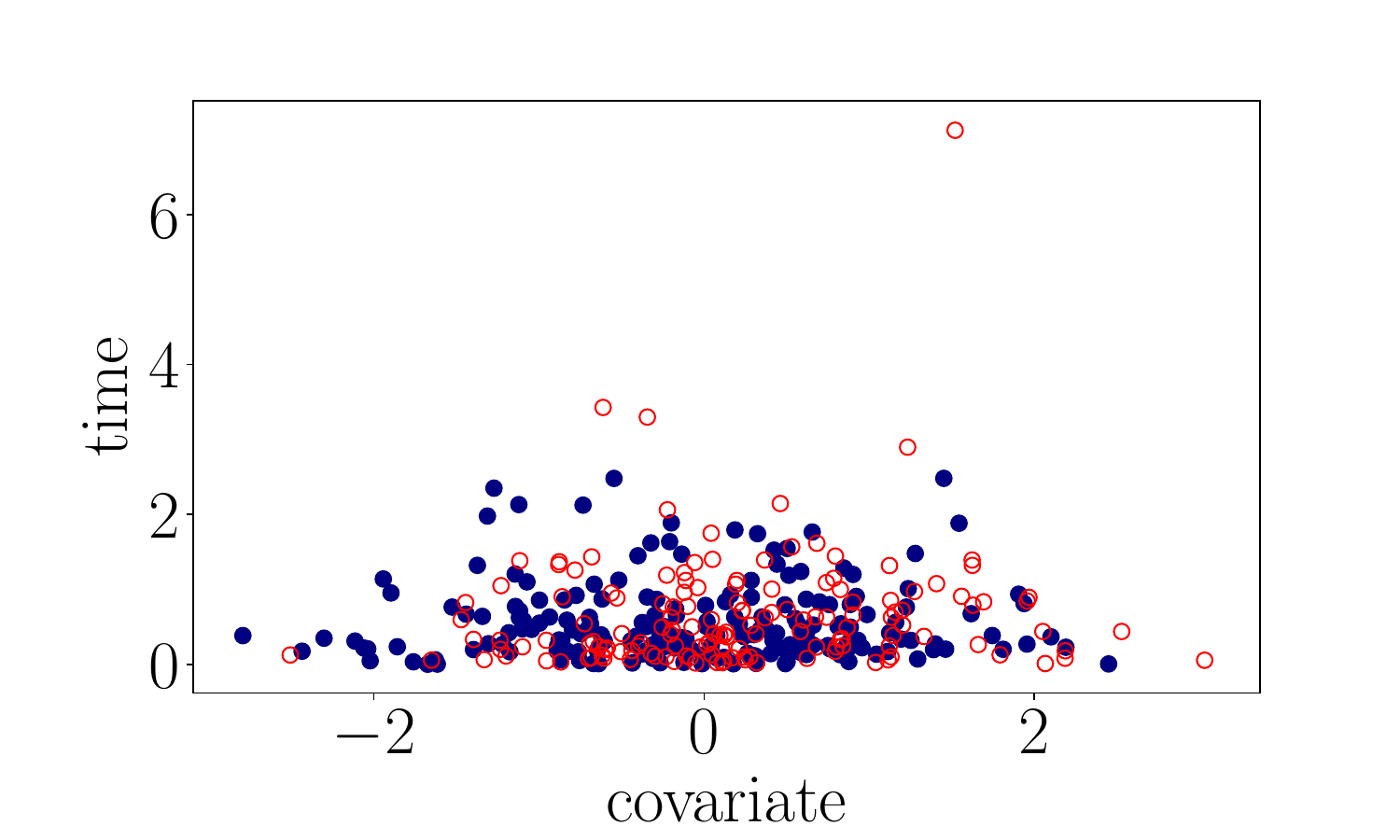}
\includegraphics[width=1.1\linewidth]{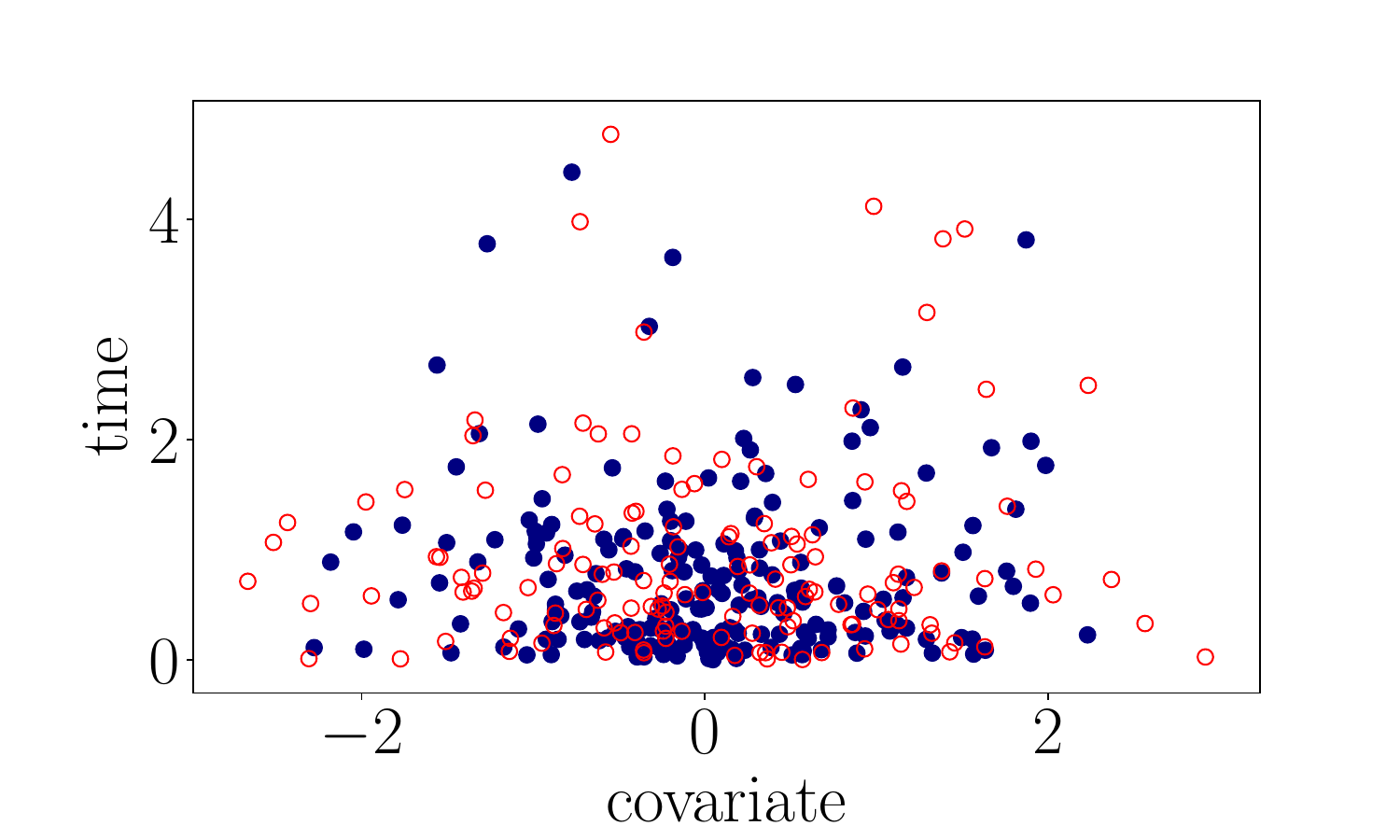}
\includegraphics[width=1.1\linewidth]{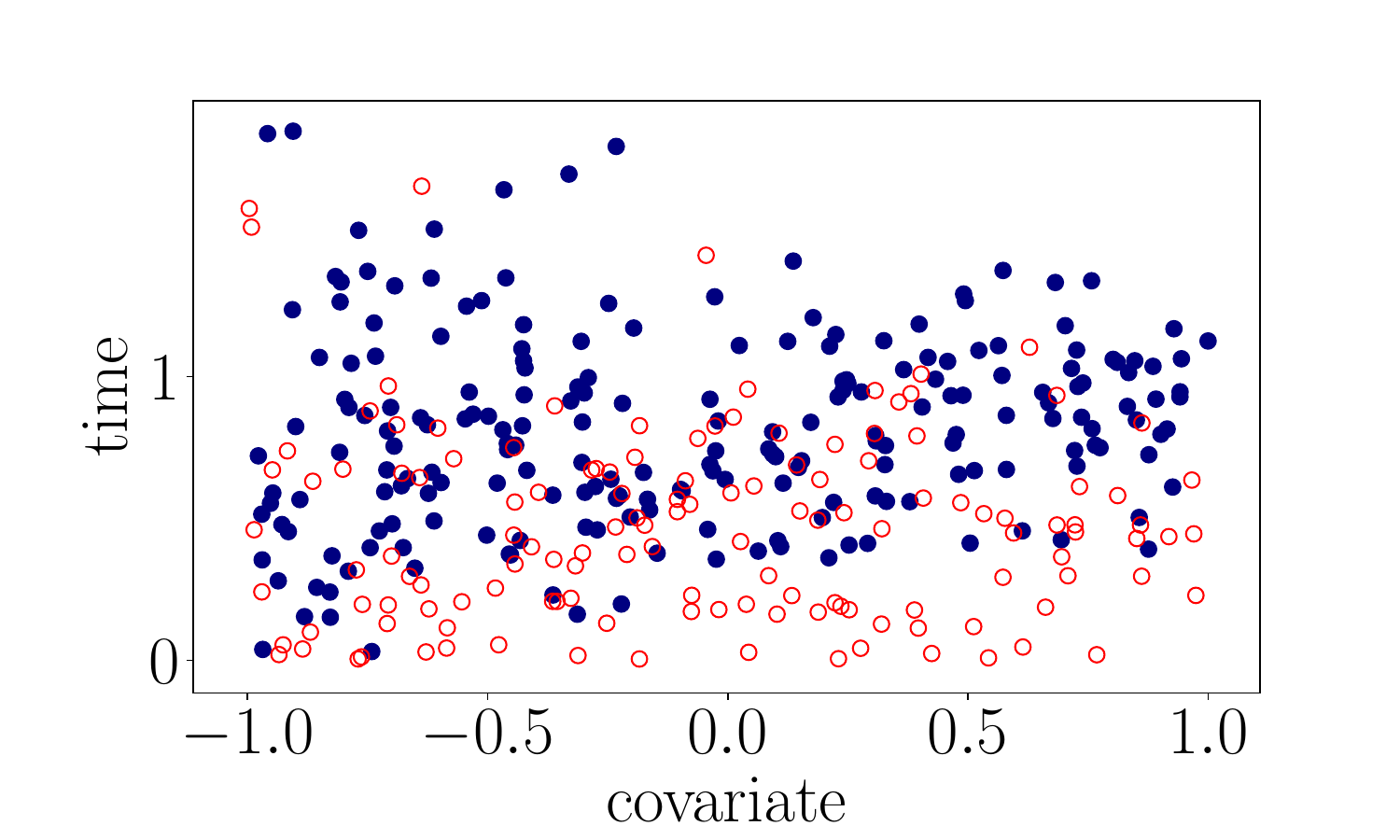}
\includegraphics[width=1.1\linewidth]{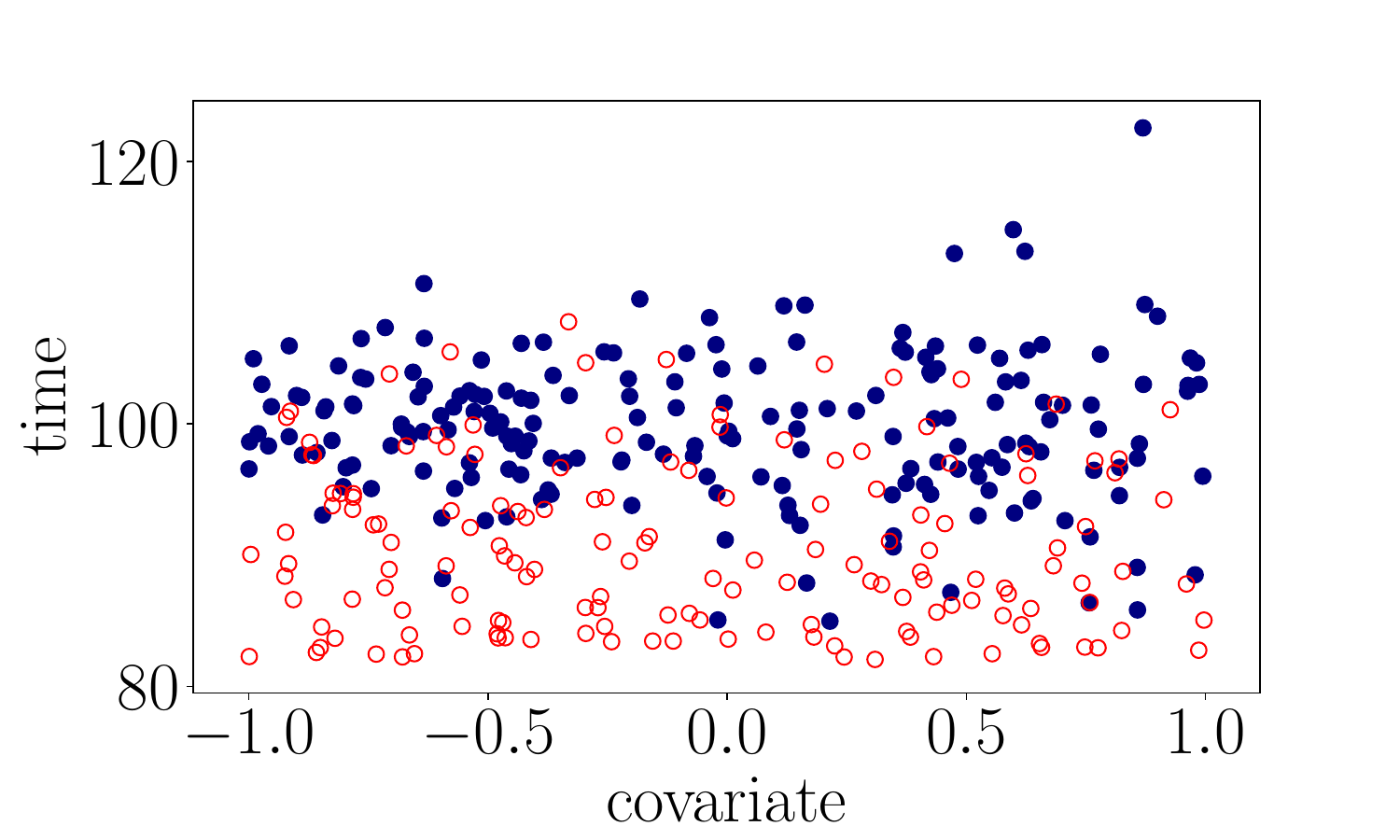}
\includegraphics[width=0.5\linewidth]{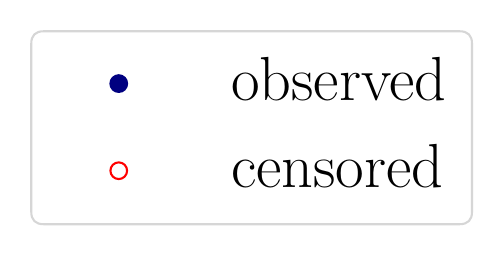}
\end{subfigure}%
\begin{subfigure}{.5\textwidth}
  \centering
\includegraphics[width=1.1\linewidth]{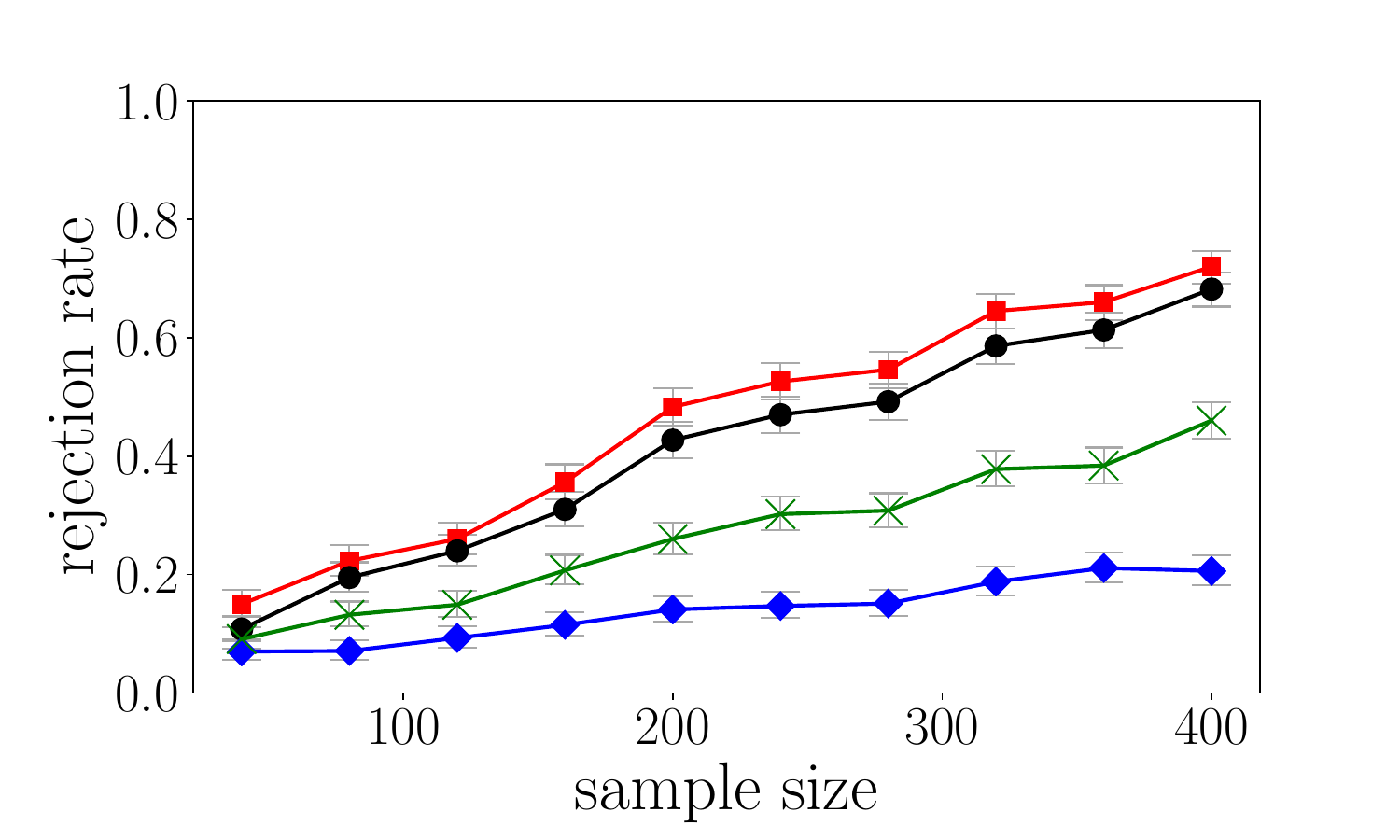}
\includegraphics[width=1.1\linewidth]{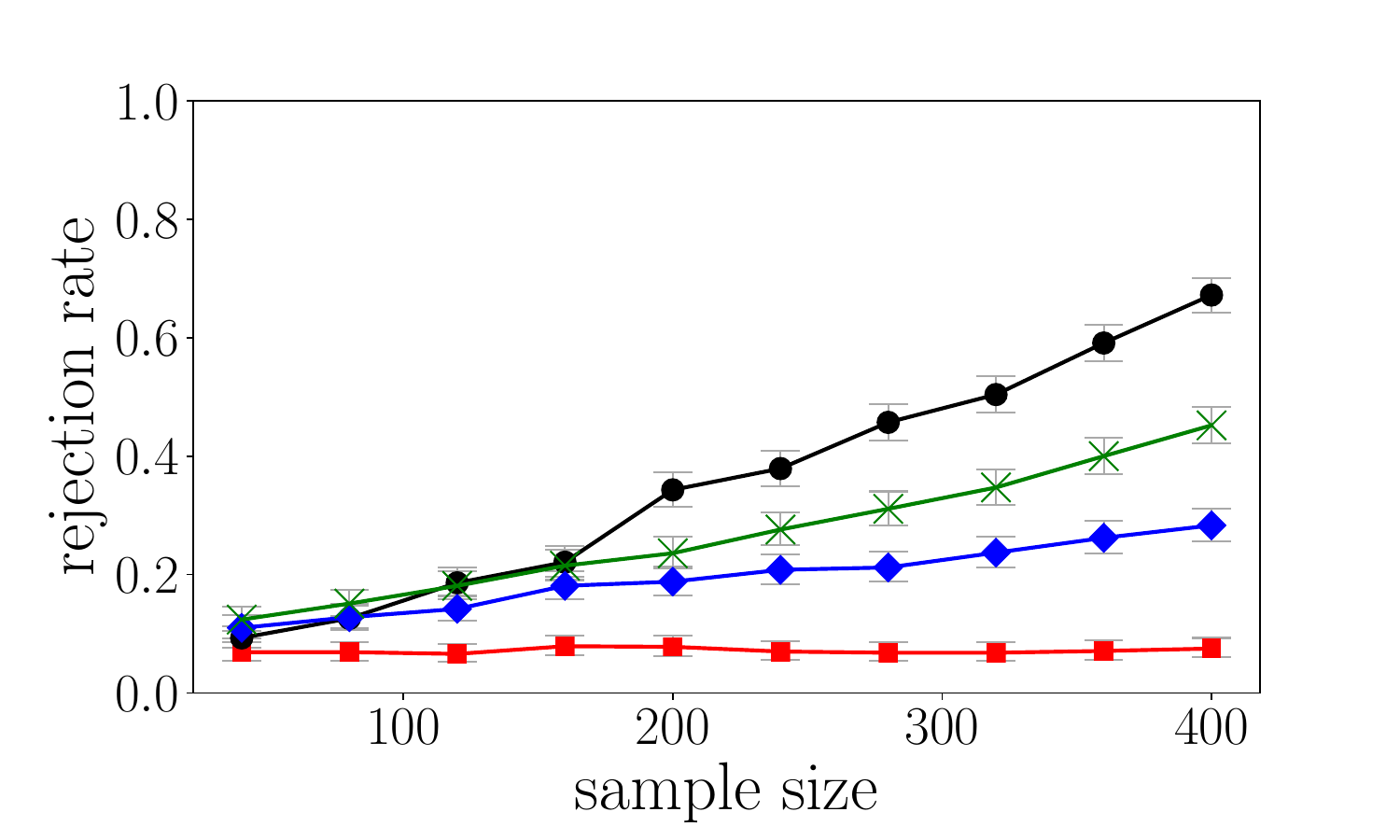}
\includegraphics[width=1.1\linewidth]{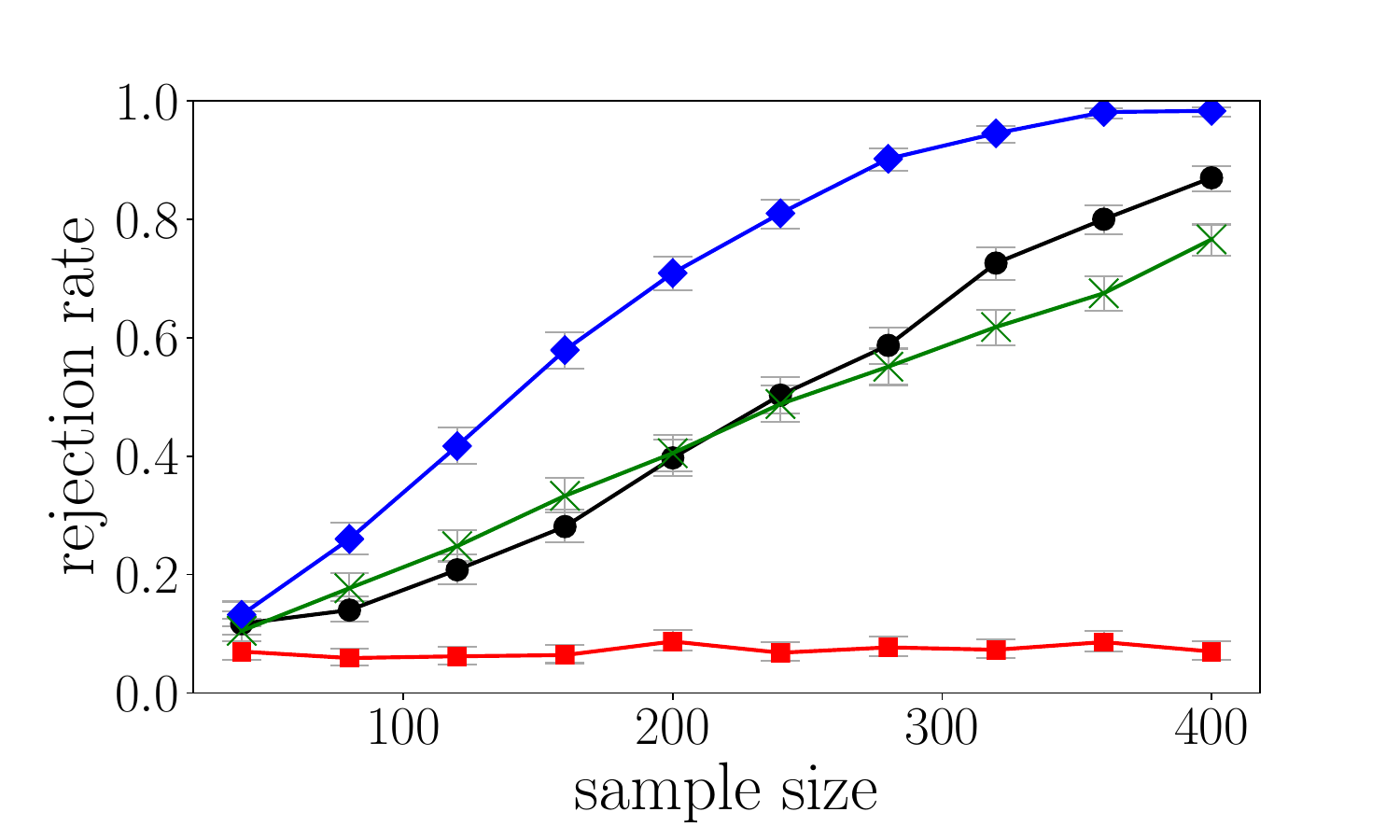}
\includegraphics[width=1.1\linewidth]{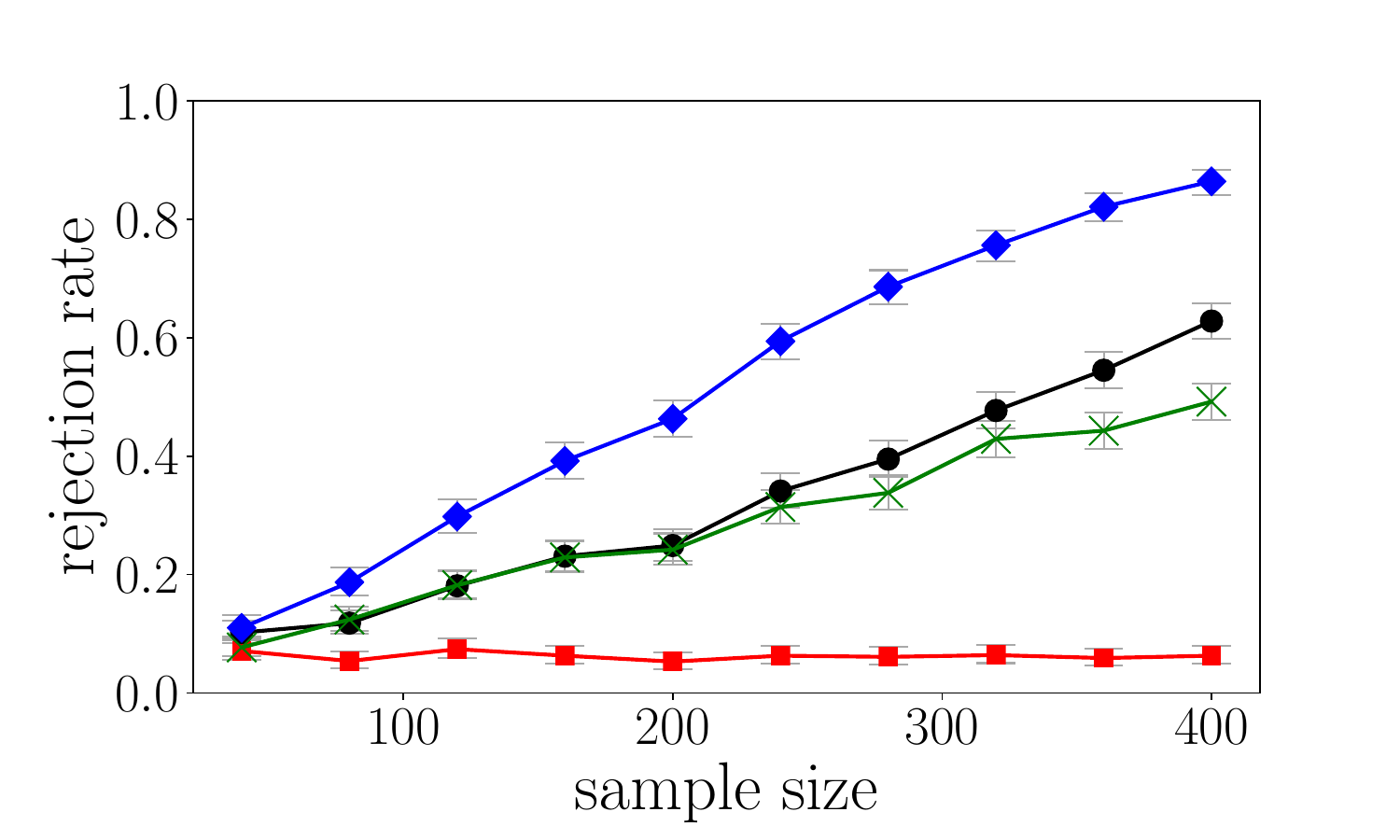}
\includegraphics[width=1\linewidth]{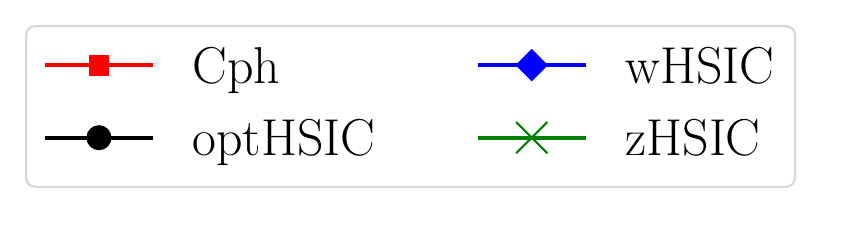}
\end{subfigure}

\caption{\label{fig:power_1dimensional} Scatterplots and rejection rates for D.1 (top row) - D.4 (bottom row) of Table \ref{table:scenarios_power}.}
\end{figure}

\subsubsection{Power for multidimensional covariates}

In D.5-8 of Table \ref{table:scenarios_power} the covariates are multidimensional. Figure \ref{fig:power_higherdimension} shows the rejection rates of the four methods, both as the dimension is fixed at $10$, and the sample size increases (left column) and when the sample size is fixed at 200, and the dimension increases (right column). In D.5 and D.6 the CPH assumption holds. Again we see that the power of optHSIC is relatively close to the power of the CPH likelihood ratio test, which is specifically designed for these assumptions. This holds both when the dependence is on a single sub-dimension of the covariate (D.6) and when the dependence is on all covariates (D.5). In D.7 and D.8 a non-linear term is present in the hazard rate. Together with zHSIC, optHSIC is now the best performing method. Note that as dimension increases, the dependence in D.6-D.8 becomes harder to detect, whereas the dependence in D.5 becomes easier to detect since in the latter $T$ depends on all covariates.

\begin{figure}[H]
\centering
\begin{subfigure}{.5\textwidth}
  \centering
\includegraphics[width=1.1\linewidth]{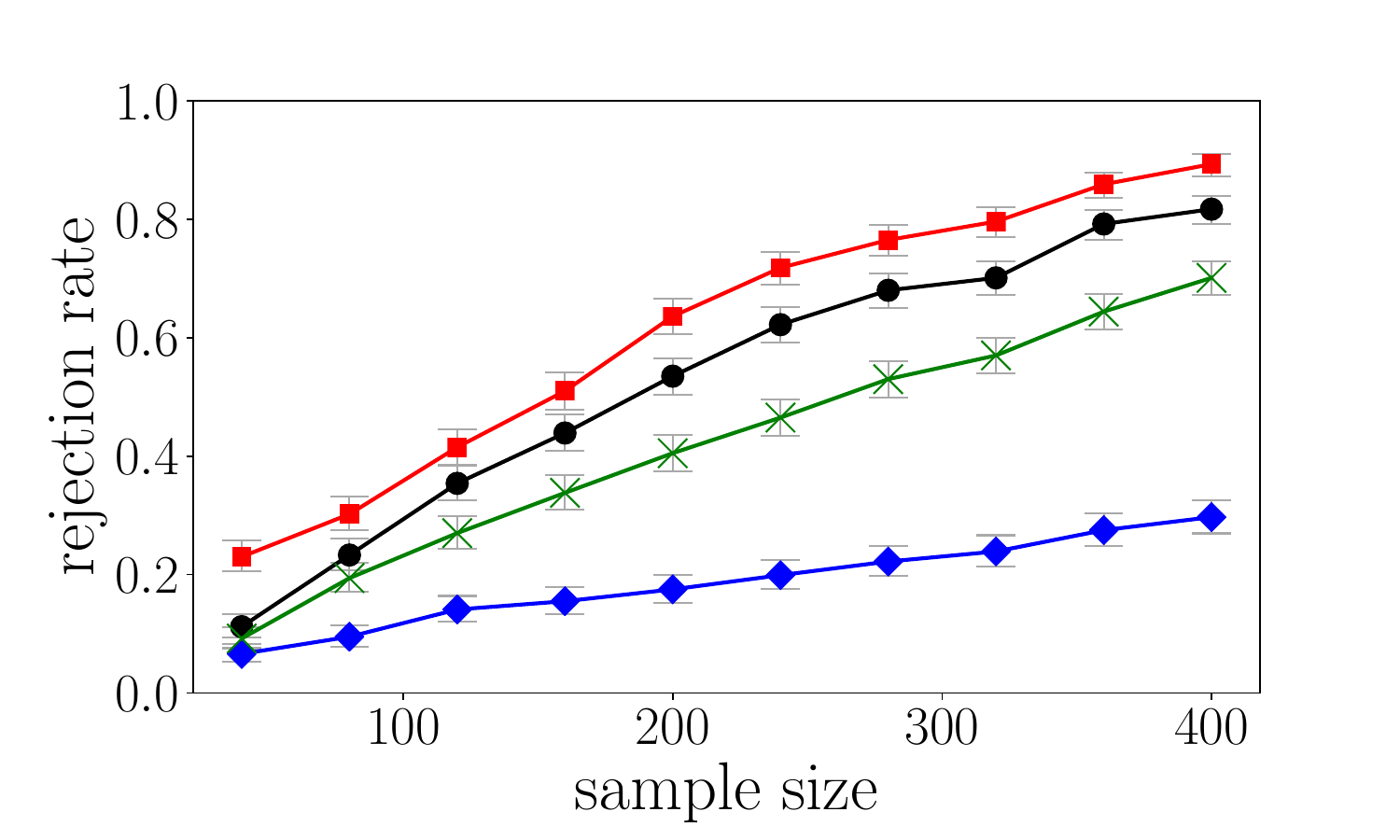}
\includegraphics[width=1.1\linewidth]{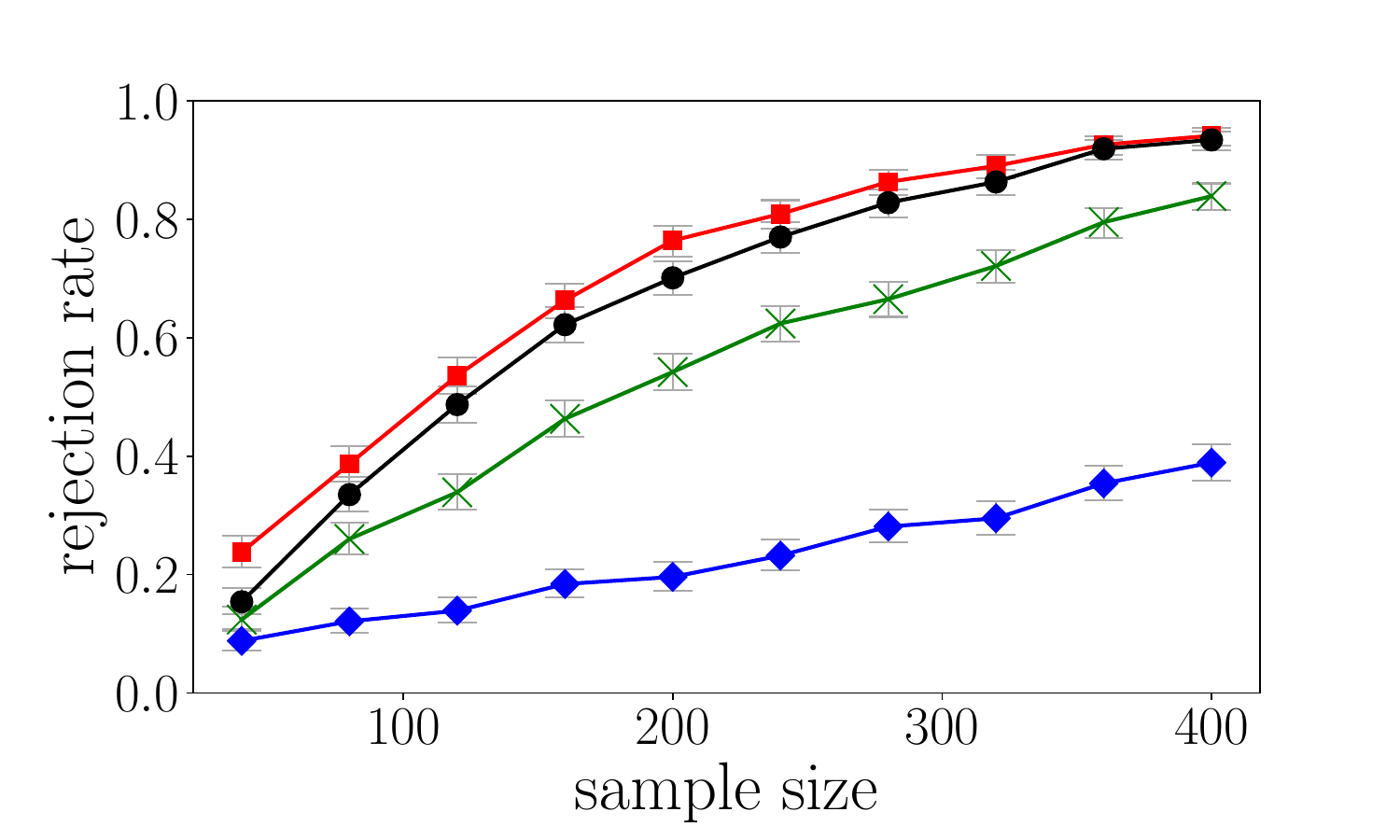}
\includegraphics[width=1.1\linewidth]{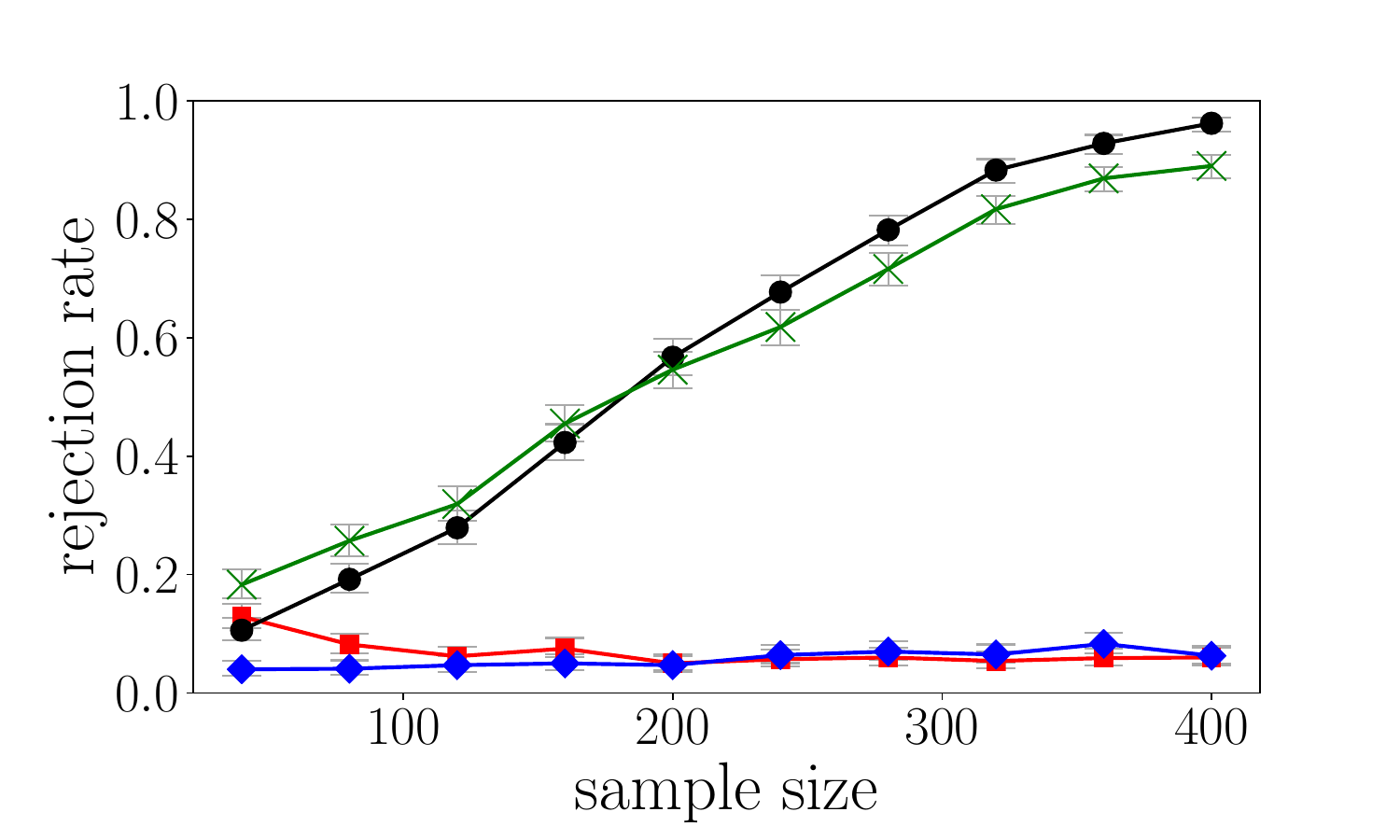}
\includegraphics[width=1.1\linewidth]{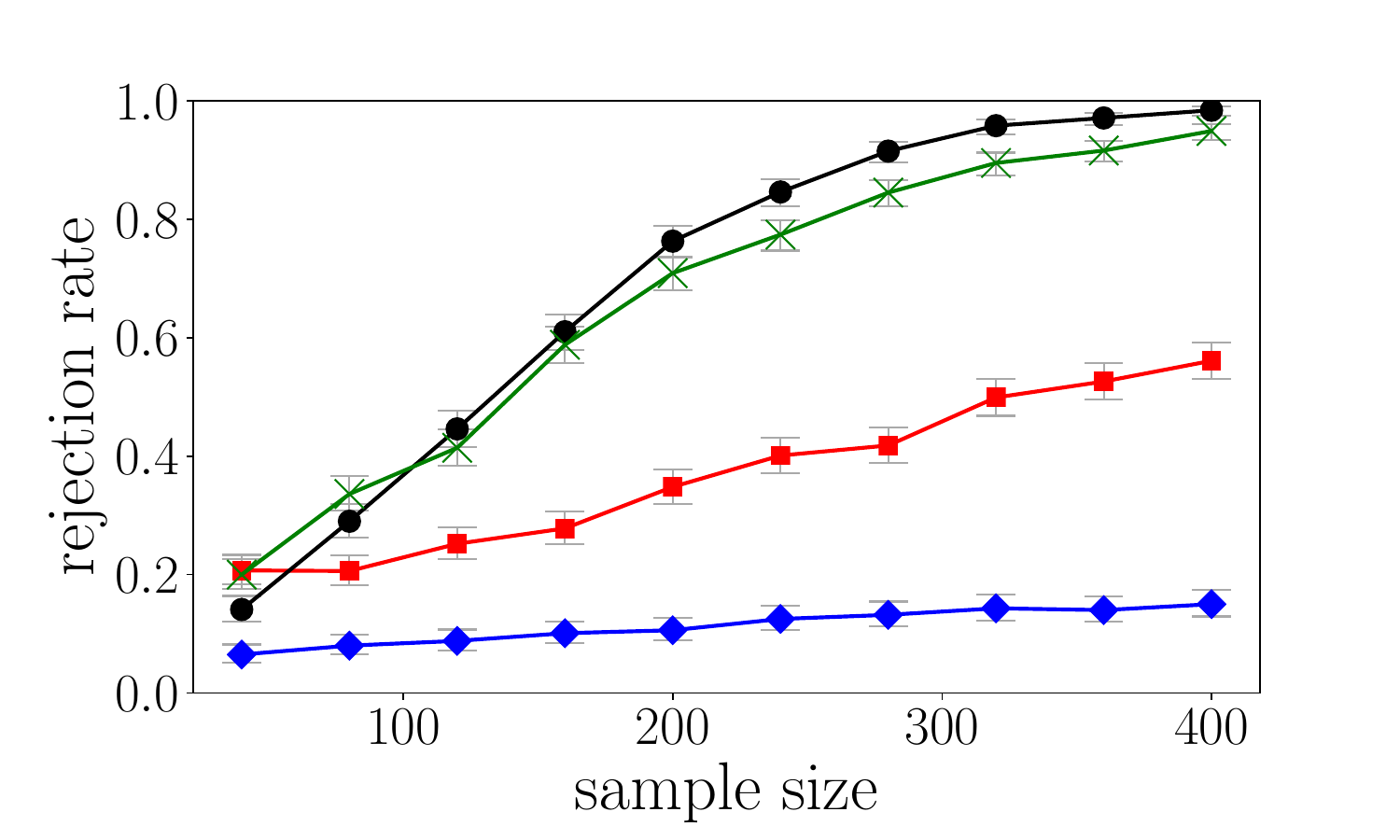}
\end{subfigure}%
\begin{subfigure}{.5\textwidth}
  \centering
\includegraphics[width=1.1\linewidth]{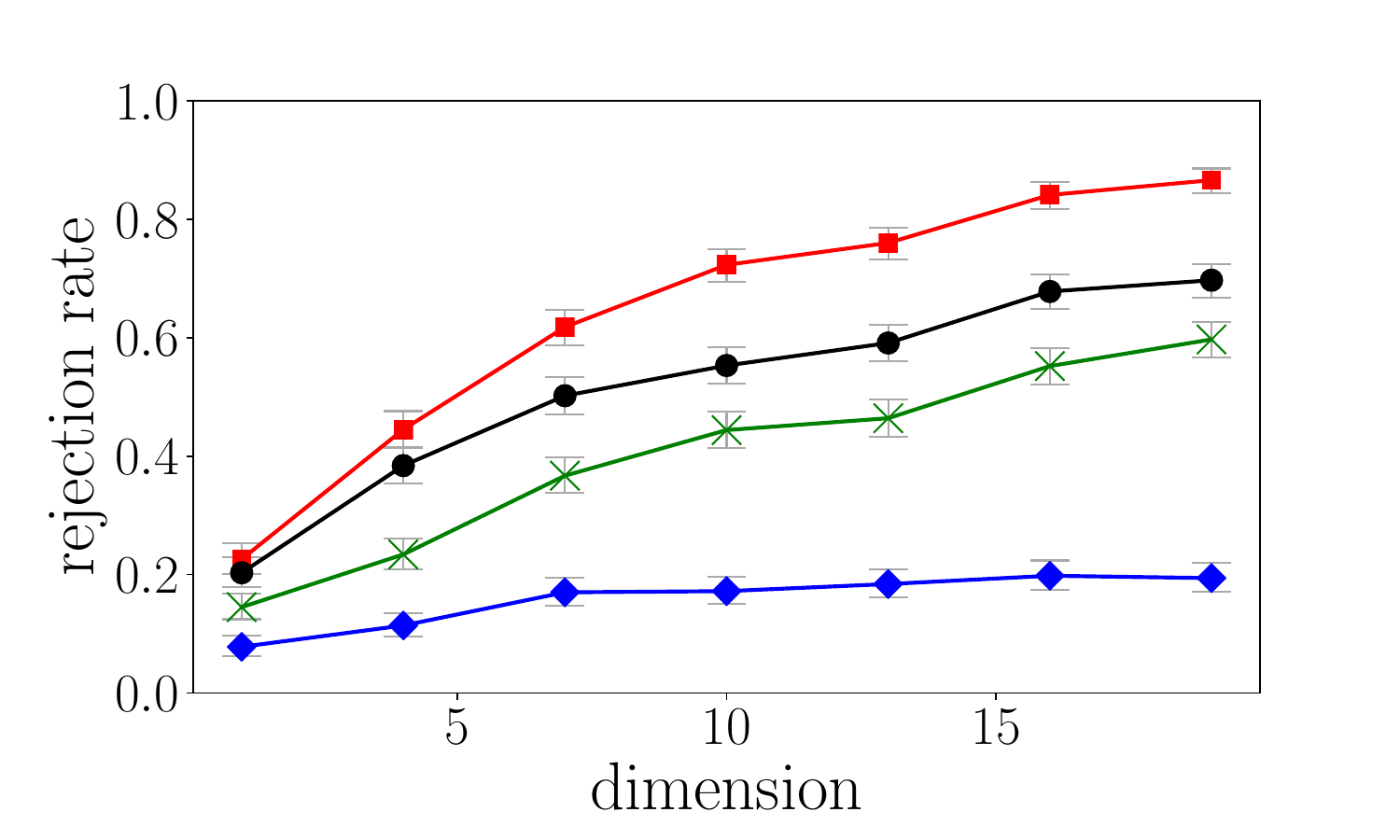}
\includegraphics[width=1.1\linewidth]{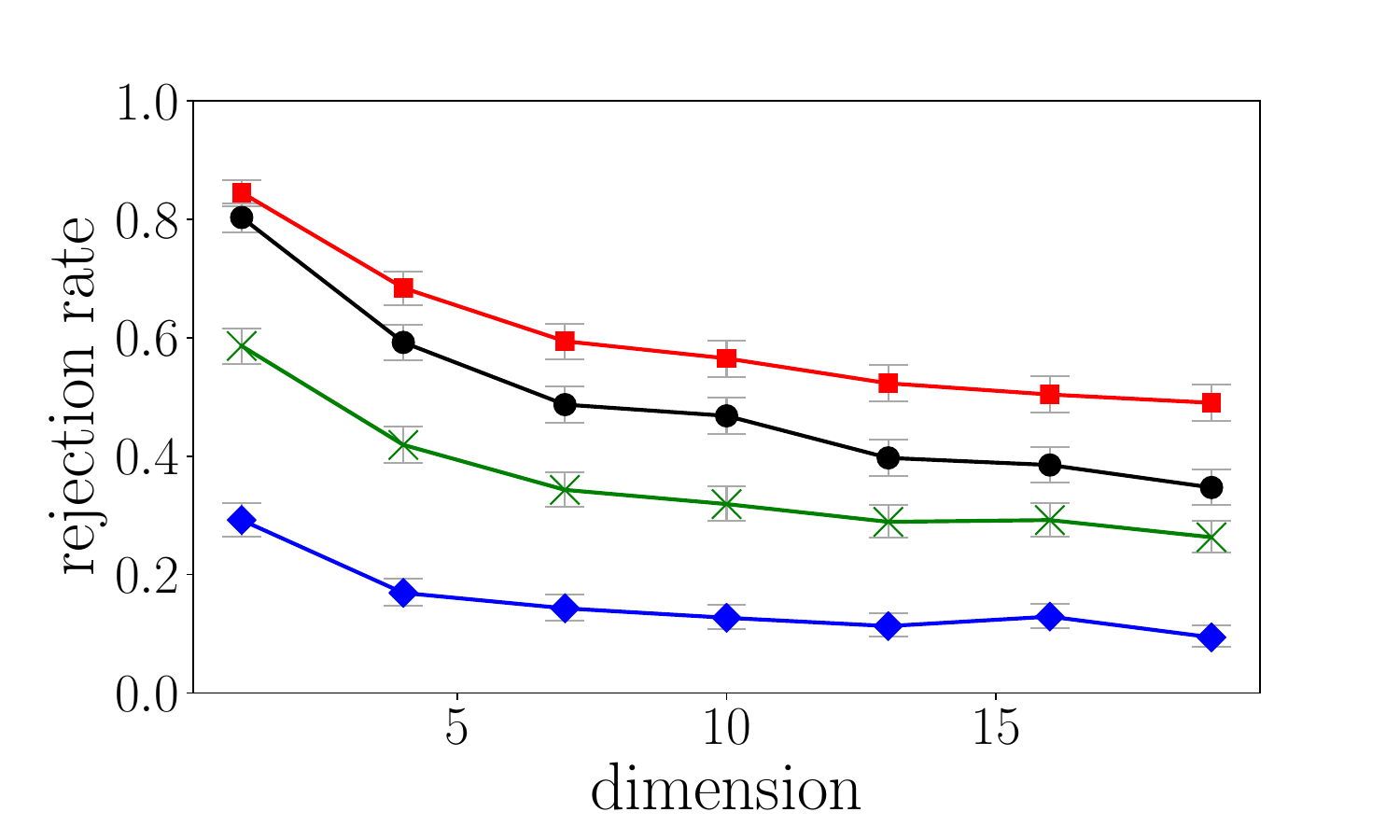}
\includegraphics[width=1.1\linewidth]{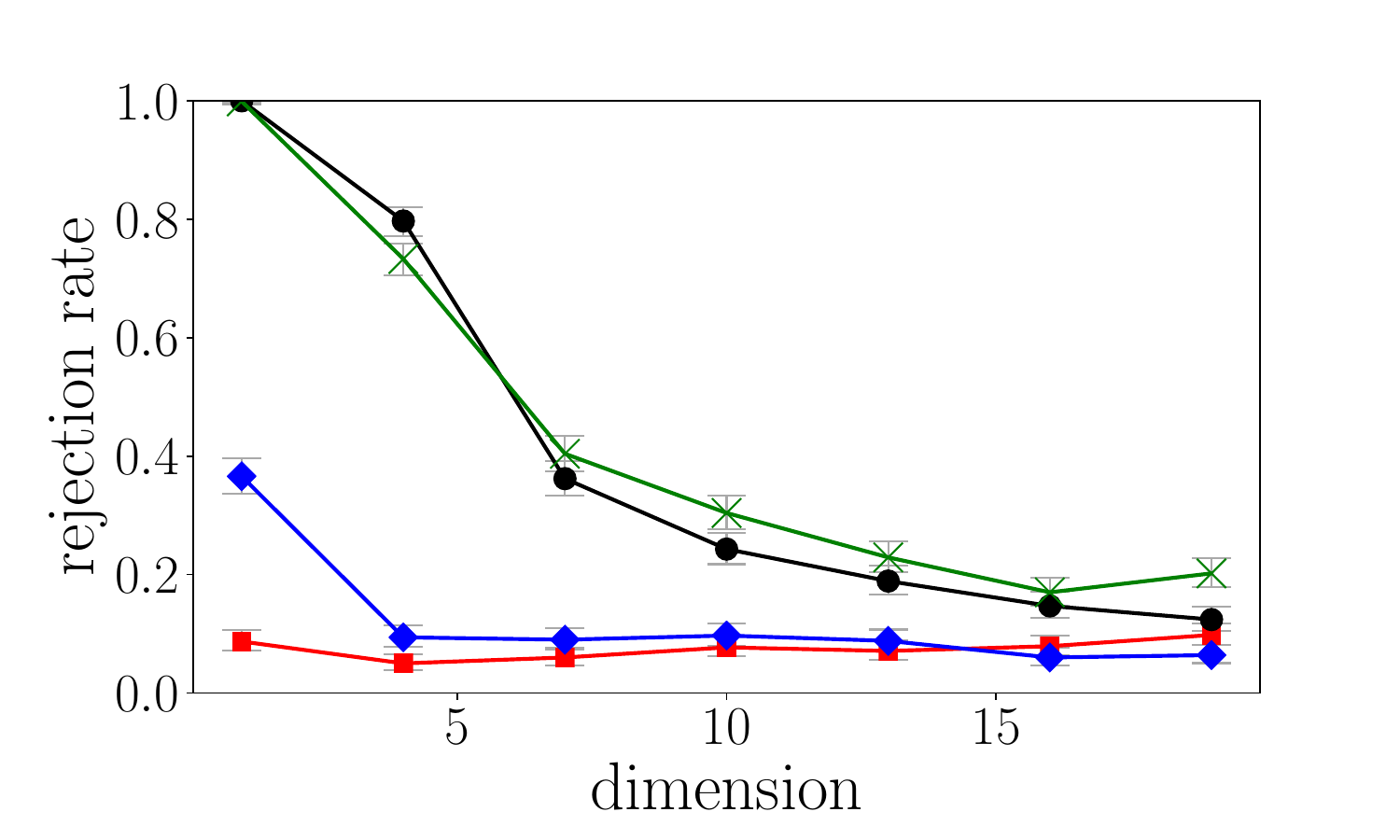}
\includegraphics[width=1.1\linewidth]{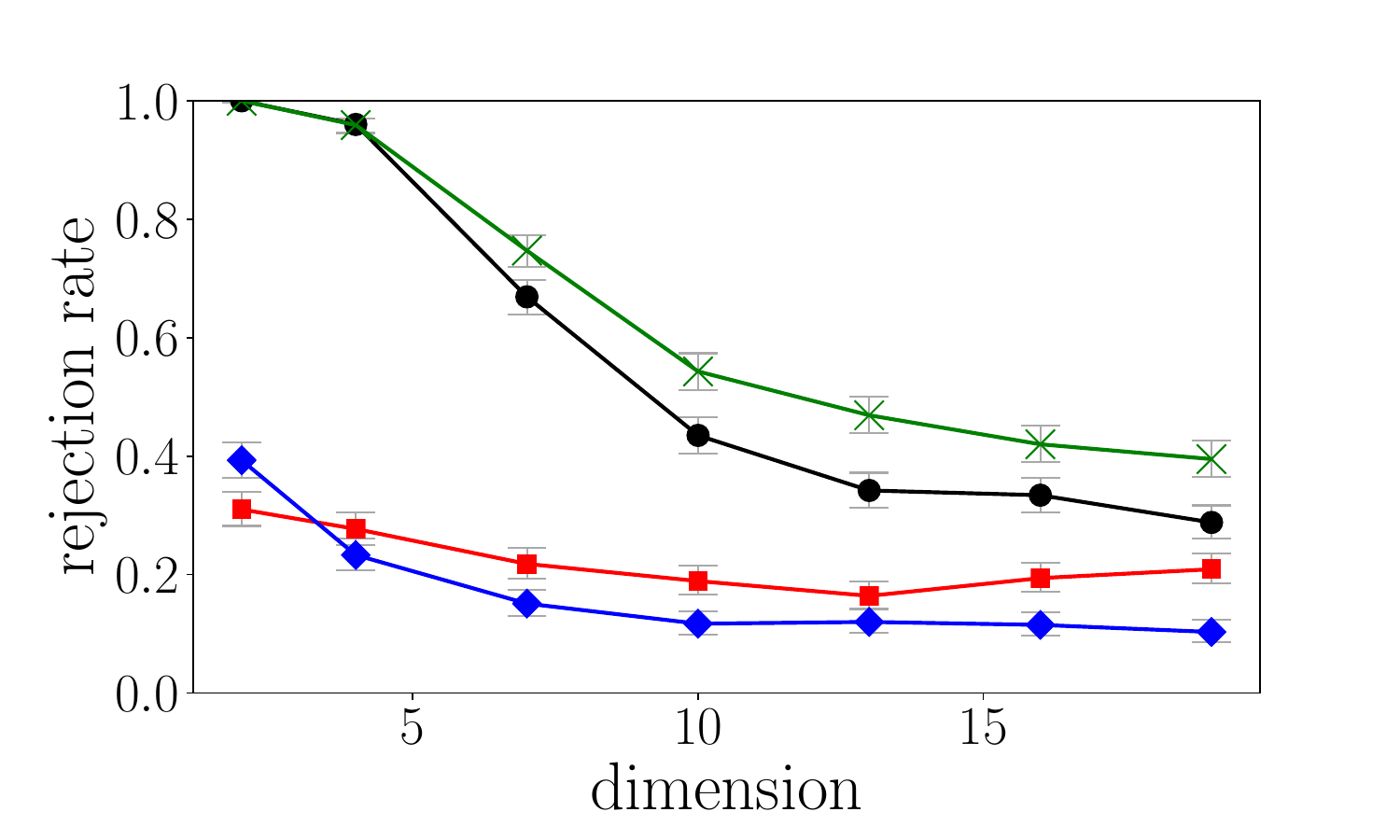}
\end{subfigure}
\begin{subfigure}{.7\textwidth}
  \centering
\includegraphics[width=1.1\linewidth]{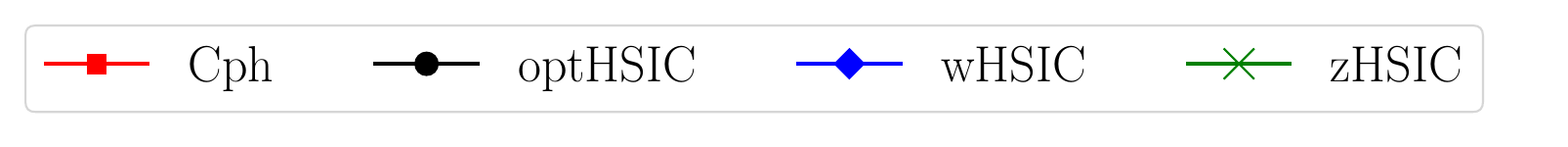}
\end{subfigure}
\caption{\label{fig:power_higherdimension} Rejection rates of the four methods for D.5 (top row) - D.8 (bottom row) of Table \ref{table:scenarios_power}. Left column: dimension fixed at 10, and the sample size increases. Right column: sample size fixed at 200, and the dimension increases. }
\end{figure}

We also investigated the power when $C\not \independent X$ and as the censoring percentage varies from $0\%$ to $80\%$. Distributions and rejection rates are presented in Section \ref{sec:appendix:varying_censoring}. The main observations are that, although all methods lose power as the censoring rate increases, the relative performance of the four methods remains similar to the results presented in the main text.

We thus find that for continuous covariates optHSIC is able to detect a wider range of dependencies than the CPH likelihood ratio test, while not losing much power when the CPH assumptions hold. This is true both when the covariate is 1-dimensional and when the covariate is multidimensional, even when the dependence arises from a lower-dimensional subspace of the covariate. Importantly, unlike the methods wHSIC and zHSIC, the type 1 error rate of optHSIC is found to be of the correct level both when $C\independent X$ and when $C\not \independent X$.

\subsection{Binary covariates \label{sec:additional_experiments}}

Lastly, we did experiments with binary covariates, in which case independence testing is equivalent to two-sample testing. For two-sample testing there are other alternative approaches, including \cite{weightedlogrank} and \cite{matabuenatwosample}. Furthermore, the wHSIC approach can be modified firstly to allow for the computation of weights within groups, and secondly a permutation test can be constructed that is valid also when censoring differs between groups. This permutation test was proposed by \cite{wangpermutation}. This special case of wHSIC coincides with the approach discussed in \cite{matabuenatwosample} for two-sample testing. 

Our main finding is that optHSIC has a decent all-round performance for two-sample testing. However, especially when the distributions meet additional assumptions used in semi-parametric tests, these semi-parametric tests outperform optHSIC. We also note that, as optHSIC relies on choosing `similar covariates' during the transformation of Algorithm \ref{algorithm:transformation}, it is not ideally suited for the two-sample case, as there is a chance of choosing the opposite covariate. 

We also provide a `failure mode' of optHSIC (we thank a reviewer for this example): a scenario in which optHSIC performs much worse than the CPH test. In this example, covariates are binary (i.e. there are two groups), group sizes are very unequal, $90\%$ of observations is censored, censoring appears only in the large group, and survival in the two groups is identical until beyond the censoring has occurred. See \ref{sec:appendix:exampletwosample} for a detailed description. 

 A full presentation of the experiments and methods used for the case of binary covariates is given in Section \ref{sec:appendix:binary_covariates}. We conclude that the main value of optHSIC lies in independence testing with continuous covariates.

\section{Discussion} 
The main contribution of this paper is the proposal of optHSIC, combining a novel way of using optimal transport to transform right-censored datasets, with
nonparametric permutation-based independence testing using HSIC/DCOV. We have shown optHSIC has power against a wider range of alternatives than the commonly used CPH model, while forfeiting little power when the CPH assumptions are
satisfied exactly. Extensive numerical simulations suggest that the approach is well calibrated, yielding reliable $p$-values even when censoring strongly depends on the covariate. Under the assumption that censoring does not depend on the covariate, we have proven correct type 1 error rate of optHSIC. Theoretical guarantees for the type 1 error under dependent censoring are a topic of future work. We furthermore proposed reweighting the original dataset, and measuring the distance between the resulting weighted mean embeddings. While these methods showed some promise, they do rely on the very strong assumption that $C$ and $X$ are independent.  An interesting future challenge is to develop nonparametric tests for covariates and right-censored times for mutual independence testing and conditional independence testing, two methods used in causal inference, and which are relevant to many problems in biostatistics.

\section{Supplementary materials and code}

Supplementary materials contain a worked out example of the transformation proposed in Algorithm \ref{algorithm:transformation}, proofs of all results and details on additional experiments. They also contain methods of combining $p$-values from multiple transformation of the same dataset. Code to replicate the experiments and of the tests is available on \url{www.github.com/davidrindt/opthsic}.

\section{Acknowledgment}
We thank the reviewers and editor for their helpful comments.

\newpage
\appendix
\section{Appendix}
\subsection{Example of transformation \label{sec:appendix:example_transformation}}
Let $D$ be the following dataset consisting of $n=5$ individuals:
\begin{table}[H]
\centering
\begin{tabular}{llll}
\hline
$i$ & $x_i$  & $z_i$  & $\delta_i$ \\ \hline
1 & $1.3$ & 13 & 1 \\ 
2 & $0.5$ & 22 & 0 \\
3 & $0.3$ & 24 & 1 \\
4 & $-1.1$ & 45  & 1 \\
5 & $-0.9$ & 81 & 0 \\
\hline
\end{tabular}
\caption{An example dataset $D$ for which we will demonstrate the transformation.}
\end{table}
We initialize $\tilde D \leftarrow [\ ]$ to be an empty multiset and set $\LL \leftarrow [1.3,0.5,0.3,-1.1,-0.9]$ and  $\AR \leftarrow [1.3,0.5,0.3,-1.1,-0.9]$. We loop over the events $i=1,\dots,4$.

At the first time $z_1=13$ it holds that $\delta_1=1$. We compute the joint distribution $P_{UU'}$ that solves the optimal transport problem between $U\sim \text{Uniform}(\AR)$ and $U'=\text{Uniform}(\LL)$. Since it holds that $\AR=\LL$, it follows that $P_{UU'}$ is the matrix:

\[  P \leftarrow 
\begin{blockarray}{cccccc}
x_1 & x_2 & x_3 & x_4 & x_5 \\
\begin{block}{(ccccc)c}
  0.2 & 0. & 0. & 0. & 0. & x_1\\
  0. & 0.2 & 0. & 0. & 0. & x_2\\
  0. & 0. & 0.2 & 0. & 0. & x_3\\
  0. & 0. & 0. & 0.2 & 0. & x_4\\
  0. & 0. & 0. & 0. & 0.2 & x_5\\
\end{block}
\end{blockarray}
 \]

Conditioning $P$ on $U=x_1$ yields $v\leftarrow [1,0,0,0,0]$, corresponding to the first row of $P$. Sampling from $\LL$ with distribution $v$ yields $\tilde x\leftarrow 1.3=x_1$ with probability 1. We update $\tilde D\leftarrow [(1.3,13)]$. We also replace $\LL \leftarrow [0.5,0.3,-1.1,-0.9]$ and  $\AR \leftarrow [0.5,0.3,-1.1,-0.9]$. We move to the next event time. 

At $z_2=22$ we note that $\delta_2=0$, so we only remove $x_2=0.5$ from $\AR$ and update $\AR\leftarrow [0.3,-1.1,-0.9]$, while leaving $\LL$ and $\tilde D$ unchanged. 

At the third event $z_3=24$ it holds that $\delta_3=1$ and $\AR= [0.3,-1.1,-0.9]$ and $\LL = [0.5,0.3,-1.1,-0.9]$. We couple a random variable $U\sim \text{Uniform}(\AR)$ and $U'=\text{Uniform}(\LL)$ using optimal transport. The resulting distribution equals: 

\[  P \leftarrow 
\begin{blockarray}{ccccc}
x_2& x_3 & x_4 & x_5 \\
\begin{block}{(cccc)c}
  0.25 & 0.083 & 0. & 0. & x_3\\
  0. & 0. & 0.25 & 0.083 & x_4\\
  0. & 0.167 & 0. & 0.167 & x_5\\
\end{block}
\end{blockarray}
 \]

We condition this distribution on $U=x_3=0.3$. This corresponds to the first row of $P$, and the resulting distribution over $\LL$ equals: $v\leftarrow [0.75,0.25,0,0]$. We now sample a point from this distribution and, suppose, it turns out to be $\tilde x\leftarrow 0.5=x_2$, which has $75 \%$ chance. We update $\tilde D\leftarrow [(1.3,13),(0.5,24)]$. We also replace $\LL \leftarrow [0.3,-1.1,-0.9]$ and  $\AR \leftarrow [-1.1,-0.9]$ before moving to the next event.

At $i=4$ it holds that $z_4=45$ and $\delta_4=1$. We note $\AR= [-1.1,-0.9]$ and $\LL = [0.3,-1.1,-0.9]$. We couple a random variable $U\sim \text{Uniform}(\AR)$ and $U'=\text{Uniform}(\LL)$ using optimal transport. The resulting distribution equals: 

\[  P \leftarrow 
\begin{blockarray}{cccc}
 x_3 & x_4 & x_5 \\
\begin{block}{(ccc)c}
  0. & 0.333 & 0.167 & x_4\\
  0.333 & 0. & 0.167 & x_5\\
\end{block}
\end{blockarray}
 \]

We condition $P$ on $U=x_4=-1.1$, resulting in $v\leftarrow [0,0.67,0.33]$. In this case our sample turns out to be $\tilde x\leftarrow -1.1=x_4$, which happens with probability $0.67$. Hence $\tilde D\leftarrow [(1.3,13),(0.5,24),(-1.1,45)]$. We also replace $\LL \leftarrow [0.3,-0.9]$ and  $\AR \leftarrow [-0.9]$. 

We have now finished the loop $i=1,\dots,4$. Since $z_5=81$ and $\LL\leftarrow [0.3,-0.9]$ we add the two datapoints $(0.3,81)$ and $(-0.9,81)$ to $\tilde D$. The finalized transformed dataset equals
$$ \tilde D\leftarrow [(1.3,13),(0.5,24),(1.1,45),(0.3,81),(-0.9,81)].$$

\subsection{Proof of Lemma \ref{lemma:permutedtransformedvstransformpermuted}}

 Let $D=((x_i,z_i,\delta_i))_{i=1}^n$ where $z_i$ is increasing, and assume for convenience there are no ties in $z$. Denote by $k\coloneqq|\{ i:\delta_i=1\}|$ the number of observed events. We do not view $D$ as random in this section. Applying the optimal transport algorithm results in a random, transformed dataset, which we denote by $T(D)$. Note that the times and covariates in $T(D)$ are not random, since they are determined by $D$, but the way in which they are paired up in the transformation $T$ may be random. The same set of times and covariates is obtained in $\pi(T(D))$ and $T(\pi(D))$ for any $\pi \in S_n$. 
 Denote the times in $T(D)$ by $t_1\leq \dots \leq  t_n$ and define a standard pairing $\tilde D=\left( (x_i,t_i) \right)_{i=1}^n$,. We will often use that $T(D),\pi(T(D)),T(\pi(D))$ are all permutations (possibly random) of $\tilde D$. 
 Finally, define $h:\{1,\dots,k \} \to \{1,\dots,n \} $, so that $t_i=z_{h(i)}$, which says that the $i$-th observed event is the $h(i)$-th overall event. As a last piece of notation, we will use $\Pi$ to denote a uniform random permutation, and $\pi$ to be a specific instance of a permutation. In particular we denote $\Pi_i=\Pi(i)$ and $\Pi_{1:h(i)-1}=[\Pi_1,\dots,\Pi_{h(i)-1}]$. This corresponds to the covariates in the permuted dataset $\Pi(D)$ until just before the time of the $i$-th observed event.

We prove the theorem by showing that the left- and right-hand sides of Lemma \ref{lemma:permutedtransformedvstransformpermuted} are both equal in distribution to
$$
\left[T(D),\Pi^1(\tilde D),\dots,\Pi^B(\tilde D) \right]. 
$$
This is done in separate lemmas.
\begin{lemma} \label{lemmapermutetransformed}
$$
	\left[ T(D),\Pi^1(T(D)),\dots,\Pi^B(T(D)) \right] 	\stackrel{d}{=}\left[T(D),\Pi^1(\tilde D),\dots,\Pi^B(\tilde D) \right] 
$$
\end{lemma}
\begin{proof}

By the above remarks we see that $T(D)=\Pi^D(\tilde D)$
for some random permutation $\Pi^D$. (Note: The randomness in $\Pi^D$ comes from the transformation 
$T$, not from the dataset $D$, which is fixed.)
It suffices to show that
$$
\left[\Pi^D,\Pi^1\circ \Pi^D ,\dots,\Pi^B \circ \Pi^D\right] 	
	\stackrel{d}{=}\left[\Pi^D,\Pi^1,\dots,\Pi^B\right] .
$$
This is easy to see by conditioning on $\Pi^D$. Let $\pi^0,\dots\pi^B$ be arbitrary permutations. Then
\begin{align*}
& P\bigl(\Pi^D=\pi^0,\Pi^1\circ \Pi^D=\pi^1,\dots,\Pi^B \circ\Pi^D=\pi^B \bigr) \\ = & P\bigl(\Pi^1 \circ \pi^0 = \pi^1,\dots,\Pi^B\circ \pi^0=\pi^B \, \, \bigm|\, \, \Pi^D=\pi^0\bigr)P(\Pi^D=\pi^0) \\ = & P\bigl(\Pi^1=\pi^1 \circ (\pi^0)^{-1},\dots,\Pi^B=\pi^B \circ (\pi^0)^{-1}\bigr)P(\Pi_D=\pi^0).
\end{align*}
Since $(\Pi^1,\dots,\Pi^B)$ are independent uniform permutations, 
this is the same as
$$
	P\bigl(\Pi^D=\pi^0,\Pi^1=\pi^1,\dots,\Pi^B =\pi^B \bigr).
$$
\end{proof}

We now consider the effect of first permuting and then transforming the data.

\begin{lemma}\label{lemmatransformpermuted} Let $\Pi$ be a uniformly chosen permutation of $S_n$ and let $T$ be defined through optHSIC. It holds that
$$T(\Pi(D))\stackrel{d}{=}\Pi(\tilde D).$$
\end{lemma}
\begin{proof}
By the comments above, we can define a random permutation $\Sigma$ by $\Sigma(\tilde D)\coloneqq T(\Pi(D))$.  We wish to show that $P(\Sigma=\sigma)=1/n!$ for all $\sigma \in S_n$. To do so, we will condition on events of the form
$$ 
\{\Pi_{1:h(i)-1}=\pi_{1:h(i)-1} \}, 
$$
which determines the covariates in the permuted dataset up to (just before) the time of the $i$-th observed event. We also condition on $\Sigma_{1:i-1}$, fixing the covariates in the transformed dataset, up to the $i$-th observed event.
Note that this conditioning fixes the coupling defined in the optimal transport algorithm. Namely, we let $\tilde Y$ and $\tilde X$ be the coupled random variables resulting from optimal transport between choosing uniformly from the covariates indexed by $[n]\setminus \{ \sigma_{1:i-1} \}$ and  choosing uniformly from the covariates indexed by $[n]\setminus\{\pi_{1:h(i)-1} \}$ respectively. Then, the transformation samples $U'$ conditional on $U=x_{\Pi_{h(i)}}$. Because $\Pi$ is a uniformly chosen permutation, given the events we conditioned on so far, $U$ is uniformly chosen from the covariates indexed by $[n]\setminus\{\pi_{1:h(i)-1} \}$. By the definition of the coupling, $U'$ is thus uniform on the covariates indexed by $[n]\setminus \{\sigma_{1:i-1}\}$. That is, $\Sigma_i$ is chosen uniformly from  $[n]\setminus \{\sigma_{1:i-1}\}$. Mathematically, for any $\sigma, \pi \in S_n$ so that the conditioning event has nonzero probability, it holds that
\begin{align*}
P(\Sigma_i=\sigma_i &\, \bigm|\,  \Pi_{1:h(i)-1}=\pi_{1:h(i)-1},\Sigma_{1:i-1}=\sigma_{1:i-1}) \\
=&\frac{1}{n-i+1}.
\end{align*}
Having shown that, conditioned on what happened in both the permuted dataset, and the synthetic dataset, the new synthetic covariate is chosen uniformly from those not chosen before, we aim to derive a recurrence relation so as to apply this result at each successive time. To this end note that
\begin{align*}
& P(\Sigma_{i:k}=\sigma_{i:k} \bigm|\  \Pi_{1:h(i)-1}=\pi_{1:h(i)-1},\Sigma_{1:i-1}=\sigma_{1:i-1})\\
 =&P(\Sigma_{i+1:k}=\sigma_{i+1:k} \, \bigm|\,  \Pi_{1:h(i)-1}=\pi_{h(i)-1},\Sigma_{1:i}=\sigma_{1:i}) \\ & \times
P(\Sigma_i=\sigma_i \, \bigm|\,  \Pi_{1:h(i)-1}=\pi_{1:h(i)-1},\Sigma_{1:i-1}=\sigma_{1:i-1})\\
 =&\frac{1}{n-i+1}P(\Sigma_{i+1:k}=\sigma_{i+1:k} \, \bigm|\,  \Pi_{1:h(i)-1}=\pi_{h(i)-1},\Sigma_{1:i}=\sigma_{1:i})
 \\ =&\frac{1}{n-i+1}\sum_{\pi_{h(i):h(i+1)-1}}P(\Sigma_{i+1:k}=\sigma_{i+1:k} \, \bigm|\,  \Pi_{1:h(i+1)-1}=\pi_{1:h(i+1)-1},\Sigma_{1:i}=\sigma_{1:i})\\ & 
\times P(\Pi_{h(i):h(i+1)-1}=\pi_{h(i):h(i+1)-1}\, \bigm|\,   \Pi_{1:h(i)-1}=\pi_{1:h(i)-1},\Sigma_{1:i}=\sigma_{1:i})
 \\ 
\end{align*}
where we use the previously established equality in the first equality. This allows us to compute
\begin{align*}
& P(\Sigma_{1:k}=\sigma_{1:k}) \\ = & \sum_{\pi_{1:h(1)-1}}P(\Sigma_{1:k}=\sigma_{1:k}\, \bigm|\,  \Pi_{1:h(1)-1}=\pi_{1:h(1)-1})\\ & \times P(\Pi_{1:h(1)-1}=\pi_{1:h(1)-1}) \\ = & \frac{1}{n}
 \sum_{\pi_{1:h(1)-1}}\sum_{\pi_{h(1):h(2)-1}}P(\Sigma_{2:k}=\sigma_{2:k}\, \bigm|\,  \Pi_{1:h(2)-1}=\pi_{1:h(2)-1},\Sigma_1=\sigma_1)\\ & \times P(\Pi_{h(1):h(2)-1}=\pi_{h(1):h(2)-1}\, \bigm|\,  \Pi_{1:h(1)-1}=\pi_{1:h(1)-1},\Sigma_1=\sigma_1) \\ &\times P(\Pi_{1:h(1)-1}=\pi_{1:h(1)-1}) \\ = & \frac{1}{n(n-1)\cdots(n-k+2)} \\ & \times \sum_{\pi_{1:h(1)-1}}\cdots\sum_{\pi_{h(k-1):h(k)-1}}P(\Sigma_{k}=\sigma_k\, \bigm|\,  \Pi_{1:h(k)-1}=\pi_{1:h(k)-1},\Sigma_{1:k-1}=\sigma_{1:k-1}) \\ & \times P(\Pi_{h(k-1):h(k)-1}=\pi_{h(k-1):h(k)-1} \, \bigm|\,  \Pi_{1:h(k-1)-1}=\pi_{1:h(k-1)-1},\Sigma_{1:k-1}=\sigma_{1:k-1})\\ & \times \cdots\\ & \times P(\Pi_{1:h(1)-1}=\pi_{1:h(1)-1})\\ =& \frac{1}{n(n-1)\cdots(n-k+1)}
\end{align*}
Since the indices $\Sigma_{(k+1):n}$ are added in uniform random order by definition of the transformation algorithm, this concludes the lemma. \end{proof}
\begin{lemma} \label{jointtransformpermuted}
$$
\left[ T(D),T_1(\Pi^1(D)),\dots,T_B(\Pi^B(D)) \right] \stackrel{d}{=} \left[T(D),\Pi^1(\tilde D),\dots,\Pi^B(\tilde D) \right] 
$$ 
\end{lemma}
\begin{proof} The left hand side can be written as $[\Pi^D(\tilde D),\Sigma^1(\tilde D),\dots,\Sigma^B(\tilde D)]$. The lemma above shows that the $\Sigma^i$ for $i\geq 1$ have the correct distributions. We only need to show they and $\Pi^D$ are a sequence of mutually independent permutations. But $\Sigma^i$ is determined completely by $\Pi^i$ and $T_i$, and $\Pi^D$ is determined by $T$. The proof follows since all these variables are mutually independent.\end{proof}

Lemma A.1 and A.3 together prove the theorem. \\

\subsection{Proof of Theorem \ref{thm:correct_type1_opt_independent_censoring}}

 The proof of Theorem \ref{thm:permutation_test_if_independent_censoring} shows that, if $C \independent X$, then 
$$(D,\Pi_1(D),\dots,\Pi_B(D)) $$
is an exchangeable vector. In particular, if $T,T_1\dots,T_B$ are independent identically distributed transformations of the data, then also 
$$ (T(D),T_1(\Pi_1(D)),\dots,T_B(\Pi_B(D)))$$
is exchangeable. We let $T$ be the transformation of the data using the transformation of the data. By Lemma \ref{lemma:permutedtransformedvstransformpermuted} the above vector is equal in distribution to
$$ (T(D),\Pi_1(T(D)),\dots,\Pi_B(T(D))),$$
implying that the latter is also exchangeable. For an arbitrary statistic $H$,
$$[H(T(D)),H(\Pi_1(T(D))),\dots,H(\Pi_B(T(D)))]$$
is thus exchangeable too. In particular, the rank of the first entry is uniformly distributed on $1,\dots,B+1$, which proves the theorem.
\\

\subsection{Proof of Theorem \ref{thm:standard_permutation_test}}

\begin{proof}
This  proof is based on the proof of Lemma 3 of \cite{berrettinformation}. Since $H_0:X \independent Y$ implies that $(X_i,Y_j)\stackrel{d}{=} (X_i,Y_i)$ it is easy to see that $\pi D\stackrel{d}{=}\text{D}$, for any permutation $\pi$. Writing $\Pi^0=id$, and $\Pi^1,\dots,\Pi^B$ for i.i.d. uniform permutations, we aim to show that, for any permutation $\sigma$ of $\{0,1,\dots,B\}$
$$
\left(\Pi^0(D),\Pi^1(D),\dots,\Pi^B(D)\right) \stackrel{d}{=} \left(\Pi^{\sigma_0}(D), \Pi^{\sigma_1}(D),\dots,\Pi^{\sigma_B}(D)         \right);
$$
that is, that the random vector on the left is exchangeable. We observed above that the first entries are equal in distribution. It remains to show that the other entries of the right-hand side are uniform and independently chosen permutations of the first entry. Indeed, writing $\Pi^{\sigma_0}(D)=\tilde D$, we can rewrite the right-hand side as:
$$ 
\left(\tilde D,\Pi^{\sigma_1}(\Pi^{\sigma_0})^{-1}(\tilde D),\dots,\Pi^{\sigma_B}(\Pi^{\sigma_0})^{-1}(\tilde D)\right).
$$
So it remains to show that $\bigl(\Pi^{\sigma_j}(\Pi^{\sigma_0})^{-1}, 1 \leq j \leq B\bigr)$ are independent uniformly chosen permutations of $S_n$. If $\sigma_0=0$, then $\Pi^{\sigma_0}=id$ and $\tilde D=D$ and the result is obvious. Now assume that $\sigma_i=0$ for $i\geq1$.

\begin{align*} 
P\bigl (\Pi^{\sigma_1}(\Pi^{\sigma_0})&^{-1}=\pi^1,\dots,(\Pi^{\sigma_0})^{-1}=\pi^i,\dots,\Pi^{\sigma_B}(\Pi^{\sigma_0})^{-1}=\pi^B \bigr) \\
= & P\left(\Pi^{\sigma_1}\pi^i=\pi^1,\dots,\Pi^{\sigma_B}\pi^i=\pi^B \, \bigm|\, (\Pi^{\sigma_0})^{-1}=\pi^i \right)P \bigl((\Pi^{\sigma_0})^{-1}=\pi^i \bigr) \\ 
= & P\left(\Pi^{\sigma(1)}=\pi^1 (\pi^i)^{-1} \right) \dots P\left(\Pi^{\sigma(B)}=\pi^B (\pi^i)^{-1}\right) P\left (\Pi^{\sigma_0})^{-1}=\pi^i\right) \\ =& (n!)^{-B} \end{align*}
It follows that the vector $$\left(D,\Pi^1(D),\dots,\Pi^B(D)\right) $$ is indeed exchangeable. Letting $H$ denote any arbitrary function on data, it follows that:
$$
\bigl(H(D),H(\Pi^1(D)),\dots,H(\Pi^B(D))\bigr) 
$$ 
is also exchangeable.  If we break ties at random, this implies that every ordering of the $B+1$ elements is equally likely. In particular, the rank of an individual element is uniformly distributed on $\{1,\dots,B+1\}$,
and the result follows.
% In particular, letting $R$ denote the rank of the first element of the vector:
%\begin{align*} P(R/(B+1)<\alpha )&=P(R<(B+1)\alpha ) \\ &\leq  P(R \in \{1,2,\dots, \lfloor \alpha (B+1) \rfloor \}) \\&=  \frac{ \lfloor \alpha (B+1) \rfloor}{B+1} \leq \alpha.  \end{align*}

\end{proof}

\subsection{Proof of Theorem \ref{thm:permutation_test_if_independent_censoring}}

\begin{proof}

When we assume that $C \independent X$ then, under the null hypothesis $H_0:T \independent X$, it follows that the pair $(T,C) \independent X$.
%To see that, note that $P_{TCX}=P_{TC\, \bigm|\,  X} P_X = P_{C\, \bigm|\,  X} P_{T \, \bigm|\,  X} P_X = P_C P_T P_X = P_X P_{C,T}.$ 
As $(Z,D)$ is $(T,C)$--measurable, also $(Z,D) \independent X$. If we write
$Y=(Z,D)$, then Theorem \ref{thm:standard_permutation_test} applies. 

\end{proof}
\subsection{Proof of Lemma \ref{lemmaexpectationweight}}

The following computation shows that $E Wf(X,Z)=Ef(X,T)$ for all functions $f$. We denote the distribution of $(X,T,C)$ on $\mathcal{X} \times \mathbb R_{\geq 0} \times \mathbb R_{\geq 0}$ by   $\mu_{XTC}$. As we are assuming independence of $T$ and $C$ given $X$ we can decompose $\mu_{XTC}=\mu_{XT}\times \mu_{C|X}$.\\
\begin{align*} E Wf(X,Z) &= E 1\{W\neq 0 \}f(X,Z) \\ &= \int_{\mathcal{X} \times \mathbb R_{\geq 0} \times \mathbb R_{\geq 0}}1\{c \geq t\}\frac{1}{g(t,x)} f(x,t)\mu_{XTC}(dx,dt,dc)  \\ &= \int_{\mathcal{X} \times \mathbb R_{\geq 0}} \int_{t}^{\infty} \frac{1}{g(t,x)} f(x,t)\mu_{C|x}(dc)\mu_{XT}(dx,dt) \\ &= \int_{\mathcal{X} \times \mathbb R_{\geq 0}} \frac{1}{g(t,x)} f(x,t) \int_{t}^{\infty} \mu_{C|x}(dc)\mu_{XT}(dx,dt) \\ &=  \int_{\mathcal{X} \times \mathbb R_{\geq 0}}  f(x,t) \mu_{XT}(dx,dt)  \\ &=Ef(X,T).
\end{align*}
where the penultimate equality follows because $\int_{t}^{\infty} \mu_{C|x}(dc)= \mathbb P(C>t|X=x)=g(t,x)$. \\

\subsection{Proof of Lemma \ref{lemmakaplanmeier}}

Estimating the survival of the censoring distribution amounts to replacing $\delta$ by $1-\delta$ in the Kaplan Meier Survival curve. This yields:
\begin{align*}
\hat P(C > z_k) &= \prod_{i=1}^k \left( \frac{n-i}{n-i+1}  \right)^{1-\delta_i} \end{align*}
Thus the probability of being uncensored by time $z_k$ equals:
\begin{align*}
\hat P(C\geq z_k ) &= \prod_{i=1}^{k-1} \left( \frac{n-i}{n-i+1}  \right) \prod_{i=1}^{k-1} \left( \frac{n-i}{n-i+1}  \right)^{-\delta_i} \\ &=  \frac{n-k+1}{n}  \prod_{i=1}^{k-1} \left( \frac{n-i}{n-i+1}  \right)^{-\delta_i}
\end{align*}
Note now that 
\begin{align*}
\frac{ 1}{ \hat P(C\geq z_k ) } &= n \times \prod_{i=1}^{k-1} \left( \frac{n-i}{n-i+1}  \right)^{\delta_i} \left( \frac{1}{n-k+1} \right) \\ &= n \times w_k
\end{align*}
for points that are uncensored. That is, Kaplan--Meier weights equal a re-scaled inverse of the probability of being uncensored by that time.

\subsection{Proof of Theorem \ref{fastcomputationhsic}}
\begin{proof}The squared norm, written as the inner product with itself, can be expanded into three terms $a_{1}+a_{2}-2a_{3}$ that we compute in turn. We denote by
$A \circ B$ the entrywise product of the matrices $A$ and $B$. Using the Hadamard product
property $\alpha^{\top}\left(A\circ B\right)\beta=\tr\left(D_{\alpha}AD_{\beta}B^{\top}\right)$
where $D_{\alpha}=\text{diag}(\alpha)$, $D_{\beta}=\text{diag}(\beta)$,
we have the following identities:
\begin{align*}
a_{1}&= \sum_{i=1}^n\sum_{j=1}^n w_iw_j k(x_i,x_j)l(z_i,z_j)\\&=w^{\top}\left(K\circ L\right)w\\ & =\tr\left(D_wKD_wL\right);
\end{align*} 
\begin{align*}
a_{2}&=\sum_{i=1}^n\sum_{j=1}^n\sum_{r=1}^n\sum_{s=1}^n w_iw_jw_rw_s k(x_i,x_j)l(z_r,z_s)\\ &=w^{\top}Kww^{\top}Lw\\
&=\tr\left(ww^{\top}Kww^{\top}L\right);
\end{align*}
\begin{align*}
a_{3} & =  \left\langle \sum_{i=1}^{n}w_{i}K((x_{i},z_{i}),\cdot) , \sum_{r=1}^n\sum_{s=1}^n w_r w_s K((z_r,z_s),\cdot) \right\rangle \\
 &=\sum_{i=1}^{n}w_{i}\left(\sum_{j=1}^{n}w_jk\left(x_{i},x_{j}\right)\right)\left(\sum_{r=1}^{n}w_{r}l\left(z_{i},z_{r}\right)\right)\\&=w^{\top}\left(Kw\circ Lw\right)\\ 
 &=\tr\left(D_{w}Kww^{\top}L\right).
\end{align*}
As the entrywise product is symmetric in its arguments, we see that also $${a_{3}=\tr\left(D_{w}Lww^{\top}K\right)=\tr\left(ww^{\top}KD_{w}L\right)}.$$
Thus the weighted HSIC is
\begin{align*}
a_1+a_2-2a_3 & = \tr(D_wKD_wL)-\tr(D_wKww^{\top}L)-\tr(ww^{\top}KD_wL)\\ &\quad +\tr(ww^{\top}Kww^{\top}L)\\
&= \tr\left(\left(D_w-ww^{\top}\right)K\left(D_w-ww^{\top}\right)L\right)\\
&= \tr\left(H_{w}KH_{w}L\right),
\end{align*}
with $H_{w}=\left(D_{w}-ww^{\top}\right)$. In the standard
HSIC case $w=\frac1n(1,1,\dots,1)\coloneqq 1_n$ and, $D_{}=\frac{1}{n}I$, so that $H_{w}=\frac{1}{n}I-1_{n}1_{n}^{\top}$
is the standard (scaled) centering matrix. 
\end{proof}
\subsection{Using multiple transformations \label{sec:appendix:multiple_transformations}} 

We list 4 ways of combining $p$-values.
\begin{itemize}
\item[\textbf{Method 1:}] Use a Bonferroni correction and reject $H_0$ if for the smallest $p$-value, denoted by $p_{(1)}$, it holds that $p_{(1)}\leq \alpha/m$. 
\item[\textbf{Method 2:}] Make the following (random) rejection decision: reject $H_0$ with probability $ \sum_{i=1}^m 1\{p_i\leq \alpha\}/m$, and accept $H_0$ otherwise. 
\item[\textbf{Method 3:}] Fix $\beta\leq\alpha$ and reject if $ \sum_{i=1}^m 1\{p_i\leq \beta \}/m \geq \beta/\alpha$. For example, reject if $\sum_{i=1}^m 1\{p_i\leq 3\alpha/4\}/m \geq 3/4$.
\item[\textbf{Method 4:}] Reject if $2\sum_{i=1}^m p_i/m\leq \alpha$. This has the advantage that it results in a quantity that can be used as a $p$-value: $2\sum_{i=1}^m p_i/m$.  
\end{itemize}

Throughout this section, assume that the null hypothesis holds. Let $p$ be the $p$-value resulting from sampling a dataset $D$ once, followed by running optHSIC once (so exactly one transformation and one permutation test on the transformed data). We aim to show that the methods 1 and 2 above have correct type 1 error under the assumption that $P_{H_0}(p\leq \alpha)\leq \alpha$ for $\alpha \in [0,1]$ which we proved for $C\independent X$ and expect to be (approximately) true for $C\not \independent X$. We aim to show that methods 3 and 4 have asymptotically (as  the number of $p$-values goes to infinity) correct type 1 error rate under the assumption that $p\sim \text{Unif}[\frac{1}{B+1},\dots,\frac{B+1}{B+1}]$, which we proved for $C\independent X$ and expect to be (approximately) true also for $C\not \independent X$. See Table \ref{table:overview_type_1_error}. While method 2 is less conservative, it is a random rejection decision which is less desirable. We can imagine Method 3 being not too conservative when $\beta=3\alpha/4$.

\subsubsection{Method 1}
Assume it holds that $P_{H_0}(p\leq \alpha) \leq \alpha$ (see comments at the start of the section). Let $ p_{(1)},\dots,p_{(m)}$ be the $p$-values obtained from applying optHSIC $m$ times to $D$, in ascending order. The Bonferroni correction procedure rejects $H_0$ if $p_{(1)}\leq \alpha/m$. This has the correct type 1 error probability because by the union bound under the null hypothesis
\begin{align*}P_{H_0}(\text{reject})&=P_{H_0}(p_{(1)}\leq \alpha/m) \\ &\leq  m P_{H_0}(p_1\leq \alpha/m) \\ &\leq \alpha. \end{align*}

\subsubsection{Method 2} Assume it holds that $P_{H_0}(p\leq \alpha)\leq \alpha$ (see comments at the start of the section). Given the $p$-values $p_1,\dots,p_m$, the second method makes a random rejection decision in the following way: Reject $H_0$ with probability $\sum_{i=1}^m1\{p_i\leq \alpha\}/m$, and accept $H_0$ otherwise. This has correct type 1 error because
\begin{align*}
P_{H_0}(\text{reject})&=E_{H_0}[P(\text{reject}|p_1,\dots,p_m)] \\ &=
E_{H_0}[\sum_{i=1}^m1\{p_i\leq \alpha\}/m]\\ &=P_{H_0}(p_1\leq \alpha)\\ &\leq \alpha.
\end{align*}
\subsubsection{Method 3}
Fix $\beta\leq\alpha$ and reject if $ \sum_{i=1}^m 1\{p_i\leq \beta \}/m \geq \beta/\alpha$. An example would be to set $\beta=\alpha/2$ in which case we reject if $\sum_{i=1}^m1\{ p_i\leq \alpha/2 \}\geq \frac12$. Assume $P_{H_0}(p\leq \alpha)\leq \alpha$. 

This is an approximate method. The `ideal' and practically impossible method is to reject if $P(p\leq \beta |D)\geq \beta/\alpha$. We show that this `ideal' method has the correct type 1 error:
\begin{align*}
P_{H_0}(\text{reject})=P_{H_0}(A)
\end{align*}
where $A$ is the event that
\begin{align*}
A=\{ D: P(p\leq \beta|D)\geq \beta/\alpha \}.
\end{align*}
Assume by contradiction that $P_{H_0}(A)>\alpha$. Then it must hold that 
\begin{align*}
P_{H_0}(p\leq \beta)&=E_{H_0}[P(p\leq \beta |D)] \\ &\geq E_{H_0}[1_{A}P(p\leq\beta|D)] \\ & >\alpha \beta / \alpha \\& = \beta. 
\end{align*}
which contradicts that $P_{H_0}(p\leq \beta)\leq \beta$. Hence it must hold that $P_{H_0}(A)\leq \alpha$. Because in practice $P(p\leq \beta|D)$ is unkown, we can estimate it by $ \sum_{i=1}^m 1\{p_i\leq \beta \}/m $ and reject if $ \sum_{i=1}^m 1\{p_i\leq \beta \}/m \geq \beta/\alpha$. Since  $ \sum_{i=1}^m 1\{p_i\leq \beta \}/m  \to P(p\leq \beta|D)$  as $m \to \infty$ it is easy to see the approximate method is asymptotically correct.

\subsubsection{Method 4} Method 4 is to reject if $2\sum_{i=1}^m p_i/m\leq \alpha$. This is an approximation of the `ideal' and practically impossible method of rejecting $H_0$ if $D$ is such that $E(p|D)\leq \alpha/2$. We assume it holds that $p\sim \text{Uniform}[0,1]$: if we prove it under the assumption $p\sim \text{Uniform}[0,1]$ the result also follows under the assumption $p\sim \text{Uniform}[\frac{1}{B+1},\dots,\frac{1}{B+1}]$ since the latter distribution corresponds to a more conservative test. We now show that this `ideal' method has the correct type 1 error rate. Note
\begin{align*}
P_{H_0}(\text{reject})=P_{H_0}(A)
\end{align*} 
where $A$ is the event that 
\begin{align*}
A=\{D:E(p|D)\leq \alpha/2 \}.
\end{align*}

Define the following family of distributions:
\begin{align*}
M_A=(\mu_D)_{D\in A}
\end{align*}
where 
\begin{align*}\mu_D([a,b])\coloneqq P(p\in[a,b]|D).
\end{align*}
We verify that the family $M_A$ and the set $A$ satisfy three conditions:\\
\textbf{Condition 1:} For all $\mu_D \in M_A$ it holds that $$\mu_D([0,1])=1.$$ 
\textbf{Condition 2:} For all $\mu_D \in M_A$  it holds that
\begin{align*}
\int_{[ 0,1]}x \mu_D(dx)\leq \alpha/2
\end{align*}
by definition of $A$. \\
\textbf{Condition 3:} For all $0\leq a \leq b \leq 1$
\begin{align*}
E_{H_0}[1\{D\in A\}\mu_D([a,b])] & = E_{H_0}[1\{D\in A \}P(p\in[a,b]|D)] \\ & \leq E_{H_0}[P(p\in [a,b]|D)] \\ & = (b-a) 
\end{align*}

We now define the $\nu_A$ to be an `average' of the distributions in $M_A$:
\begin{align*}
\nu_A([a,b])=E_{H_0}[1\{D\in A \}\mu_D([a,b])]/P_{H_0}(A) 
\end{align*}
It is easy to see $\nu_A $ satisfies condition 1:
\begin{align*}
\nu_A[0,1]&=E_{H_0}[1\{D\in A\}\mu_D([0,1])]/P_{H_0}(A)\\ & =1.
\end{align*}
To see $\nu_A$ satisfies condition 2 note that 
\begin{align*}
\int_{[0,1]}x\nu_{A}(dx) & = \int_{[0,1]} x E_{H_0}[1\{D \in A \} \mu_D(dx)]/P_{H_0}(A) \\ & = E_{H_0}[1\{D\in A\}\int_{[0,1]}x \mu_D(dx)]/P_{H_0}(A)\\  & \leq E_{H_0}[ 1\{D\in A\} \alpha/2 ]/P_{H_0}(A) \\ & = \alpha/2.
\end{align*}
To see $\nu_A$ satisfies condition 3 note that 
\begin{align*}
E_{H_0}'(1\{D'\in A \}\nu_A[a,b])&= P_{H_0}(A)\nu_A[a,b]\\ &= E_{H_0}'(1\{D'\in A \} E_{H_0}[1\{D\in A \} \mu_D([a,b])])/P_{H_0}( A) \\ & \leq E_{H_0}(1\{D'\in A \} (b-a))/P_{H_0}( A) \\ &=b-a.
\end{align*}
Here $E'_{H_0}$ denotes expectation with respect to $D'$ and $E_{H_0}$ with respect to $D$. Note condition 1 says that $\nu_A$ is a probability measure, condition $2$ says its expectation is less than $\alpha/2$ and condition 3 that $v_A$ is dominated by the measure defined by the uniform density $1/ P_{H_0}(A)$.

Thus, if $ P_{H_0}(A)=\beta$, then $\nu_A$ satisfies the three conditions above, with $\beta$ in the third condition. We now show that there is a maximum value $\beta^\star$ so that if $P_{H_0}(A)=\beta>\beta^\star$, then it is impossible for any distribution $\nu$ to satisfy the three conditions above.

We first show $\beta^\star \geq \alpha.$ Assume that $P_{H_0}(A)=\alpha$. If we let $\nu_{\alpha}$ be the uniform probability measure on $[0,\alpha]$, then it is clear that the first two conditions are met: it is a valid probability distribution (condition 1), the expectation is exactly $\alpha/2$ (condition 2). The third condition is met since  for $0\leq a\leq b \leq \alpha$ it holds that
\begin{align*}
E_{H_0}[1\{D\in A \}\nu_{\alpha}([a,b])]=P_{H_0}(A)(b-a)/\alpha = b-a
\end{align*}
because by assumption $P_{H_0}(A)=\alpha$.

We now need to show that if $\beta > \alpha$ there does not exist a distribution $\nu$ that satisfies the three conditions. To that end, note first that if $\nu$ satisfies the three conditions for $\beta$, then it also satisfies the conditions for any $\beta'$ such that $\beta'<\beta$ (note $P_{H_0}(A)$ only appears in the third condition). So in particular such $\nu$ would have to satisfy the conditions also with $\beta=\alpha$. However, to change the distribution $\nu_{\alpha}$ defined above, one cannot place more mass in the region $[0,\alpha]$ by condition 3, which says $\nu$ needs to be dominated by the measure defined by the uniform density $1/P_{H_0}(A)$. On the other hand, if one removes mass from $[0,\alpha]$ then one automatically increases the mean of the distribution, which violates condition 2, since the mean of $\nu_{\alpha}=\alpha/2$. We conclude that $\beta^\star=\alpha$. The type 1 error of the `ideal' method is thus at most $\alpha$. 

Since $\sum_{i=1}^m p_i/m\to E(p|D)$ as $m\to \infty$ it is easy to see the approximate method has asymptotically correct type 1 error rate. This method has the advantage that it results in a combined $p$-value: $2\sum_{i=1}^mp_i/m$, whereas the other methods only lead to rejection decisions. The $p$-value will be conservative if there is little randomness in the dataset. In the case there is no censoring, the $p$-value is a factor 2 bigger than necessary. However, the Bonferroni correction would result in a $p$-value that is a factor $m$ bigger than necessary (where $m$, the number of transformations used, which may be much larger than $2$).

\subsection{Tables}
\subsubsection{ Type 1 error rates \label{sec:appendix:type1error}}

\begin{figure}[H]
\centering
\begin{subfigure}{.5\textwidth}
  \centering
\includegraphics[width=1.1\linewidth]{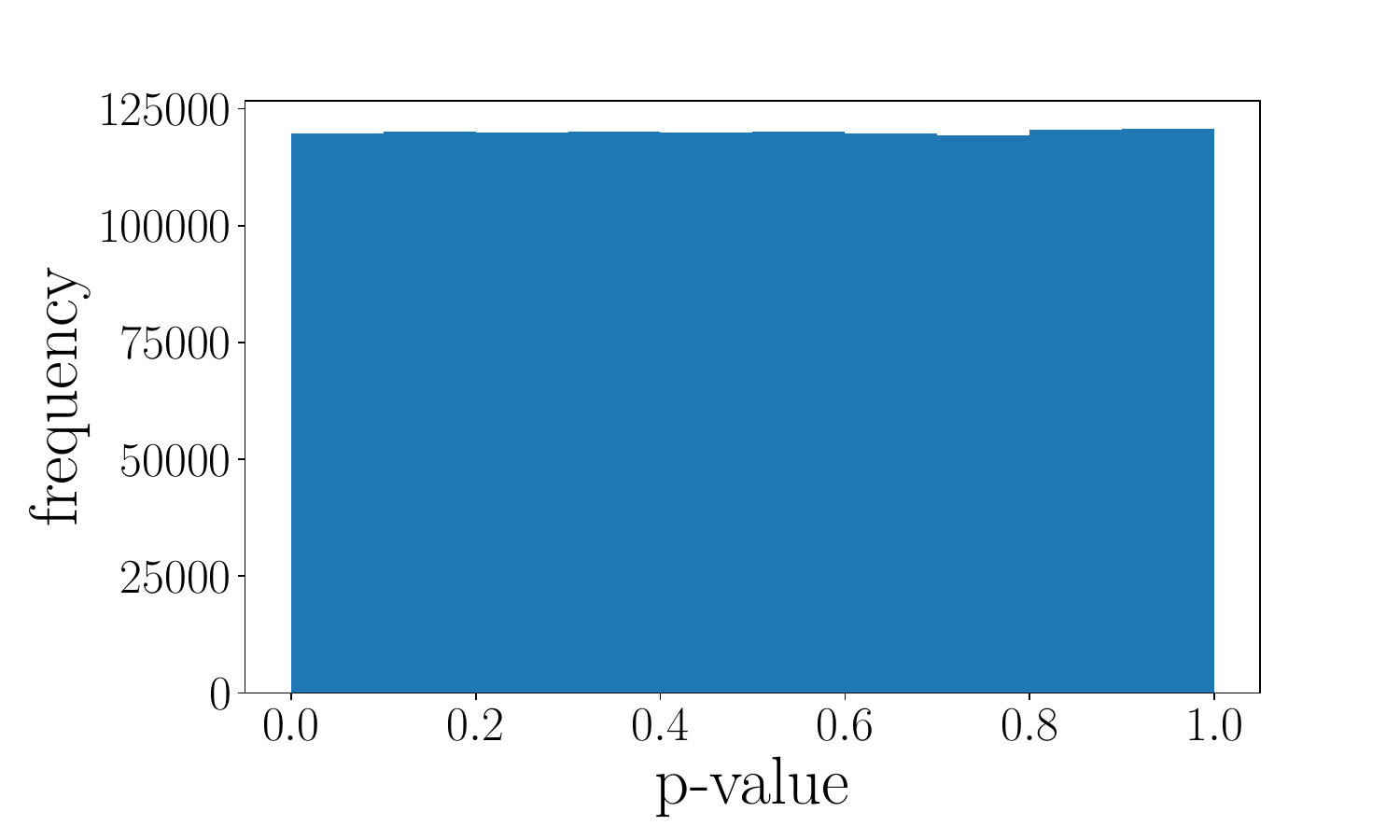}
\end{subfigure}%
\begin{subfigure}{.5\textwidth}
  \centering
\includegraphics[width=1.1\linewidth]{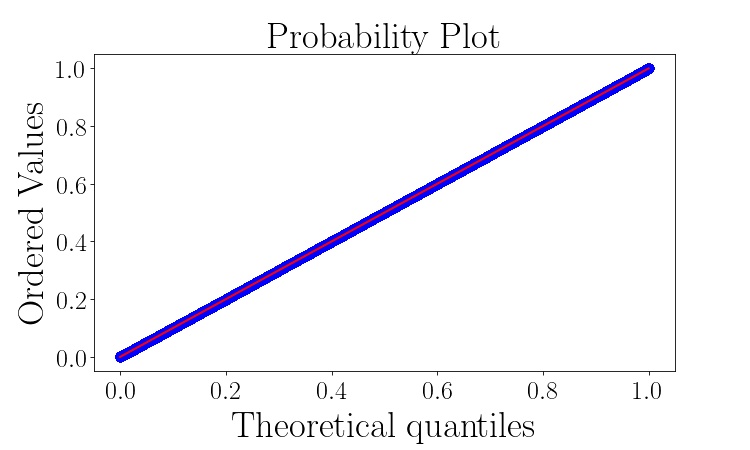}
\end{subfigure}
\caption{\label{fig:lin_censoring_uniform_pvalues} A histogram and a qq-plot of the $p$-values obtained from optHSIC for distribution D.6 of Table \ref{table:scenarios_type1error}, in which $C\not \independent X$, with a sample size of $n=200$. Data was sampled $1.2$ million times and on each sample the optHSIC test was performed, resulting in 1.2 million $p$-values. These plots indicate that despite the fact that there was a strong dependence between $C$ and $X$, the $p$-values returned by optHSIC are approximately $\text{Uniform}[0,1]$.  }
\end{figure}

\begin{table}[H]
\resizebox{\textwidth}{!}{
\begin{tabular}{@{}lllllllllll@{}}
\toprule
 $n=$   & $40$ & $80$ & $120$ & $160$ & $200$ & $240$ & $280$ & $320$ & $360$ & $400$ \\ \midrule
D.1 & 0.048 & 0.050 & 0.051 & 0.052 & 0.052 & 0.049 & 0.047 & 0.054 & 0.051 & 0.049 \\ D.2 &
0.048 & 0.051 & 0.047 & 0.047 & 0.049 & 0.051 & 0.048 & 0.054 & 0.050 & 0.047 \\ D.3 &
0.243 & 0.461 & 0.630 & 0.774 & 0.860 & 0.909 & 0.953 & 0.970 & 0.985 & 0.991 \\ D.4 &
0.142 & 0.232 & 0.343 & 0.487 & 0.610 & 0.734 & 0.812 & 0.880 & 0.932 & 0.959 \\ D.5 &
0.075 & 0.116 & 0.142 & 0.168 & 0.210 & 0.243 & 0.278 & 0.316 & 0.357 & 0.397 \\ D.6 &
0.932 & 1.000 & 1.000 & 1.000 & 1.000 & 1.000 & 1.000 & 1.000 & 1.000 & 1.000 \\ D.7&
0.308 & 0.594 & 0.761 & 0.856 & 0.906 & 0.937 & 0.960 & 0.971 & 0.980 & 0.984 \\ D.8&
0.078 & 0.122 & 0.152 & 0.211 & 0.264 & 0.307 & 0.355 & 0.388 & 0.439 & 0.467 \\ \bottomrule
\end{tabular}}
\caption{The rejection rate of zHSIC in against the distributions D.1-8 of Table \ref{table:scenarios_type1error}. \label{table:rejectionrates_type1error}}
\end{table}

\begin{table}[H]
\resizebox{\textwidth}{!}{
\begin{tabular}{@{}lllllllllll@{}}
\toprule
 $n=$   & $40$ & $80$ & $120$ & $160$ & $200$ & $240$ & $280$ & $320$ & $360$ & $400$ \\ \midrule
D.1 & 0.045 & 0.047 & 0.049 & 0.050 & 0.049 & 0.051 & 0.049 & 0.054 & 0.048 & 0.049 \\ D.2 &
0.050 & 0.047 & 0.048 & 0.047 & 0.050 & 0.048 & 0.047 & 0.045 & 0.048 & 0.049 \\ D.3 &
0.079 & 0.166 & 0.235 & 0.326 & 0.410 & 0.466 & 0.540 & 0.597 & 0.658 & 0.700 \\ D.4&
0.161 & 0.204 & 0.232 & 0.270 & 0.309 & 0.350 & 0.394 & 0.456 & 0.506 & 0.549 \\ D.5&
0.057 & 0.084 & 0.110 & 0.131 & 0.163 & 0.191 & 0.216 & 0.258 & 0.274 & 0.299 \\ D.6 &
0.267 & 0.672 & 0.900 & 0.971 & 0.990 & 0.998 & 0.999 & 0.999 & 1.000 & 1.000 \\ D.7&
0.071 & 0.107 & 0.143 & 0.192 & 0.260 & 0.305 & 0.369 & 0.412 & 0.480 & 0.527 \\ D.8&
0.057 & 0.078 & 0.081 & 0.095 & 0.120 & 0.142 & 0.153 & 0.162 & 0.177 & 0.190 \\ \bottomrule
\end{tabular}}
\caption{The rejection rate of wHSIC in against the distributions D.1-8 of Table \ref{table:scenarios_type1error}. \label{table:rejectionrates_whsic_type1error}}
\end{table}

\begin{table}[H]
\resizebox{\textwidth}{!}{
\begin{tabular}{@{}lllllllllll@{}}
\toprule
 $n=$   & $40$ & $80$ & $120$ & $160$ & $200$ & $240$ & $280$ & $320$ & $360$ & $400$ \\ \midrule
D.1 & 0.056 & 0.054 & 0.050 & 0.047 & 0.055 & 0.048 & 0.048 & 0.055 & 0.050 & 0.051 \\ D.2 &
0.055 & 0.056 & 0.055 & 0.050 & 0.055 & 0.051 & 0.049 & 0.050 & 0.052 & 0.050 \\ D.3 &
0.057 & 0.056 & 0.051 & 0.053 & 0.054 & 0.051 & 0.046 & 0.057 & 0.048 & 0.050 \\ D.4 &
0.057 & 0.061 & 0.050 & 0.051 & 0.054 & 0.058 & 0.049 & 0.053 & 0.051 & 0.051 \\ D.5 &
0.058 & 0.058 & 0.055 & 0.052 & 0.053 & 0.053 & 0.048 & 0.054 & 0.048 & 0.049 \\ D.6 &
0.053 & 0.051 & 0.051 & 0.050 & 0.046 & 0.048 & 0.057 & 0.053 & 0.054 & 0.055 \\ D.7 &
0.148 & 0.094 & 0.084 & 0.062 & 0.063 & 0.061 & 0.058 & 0.055 & 0.060 & 0.058 \\ D.8&
0.143 & 0.086 & 0.074 & 0.064 & 0.067 & 0.058 & 0.059 & 0.058 & 0.061 & 0.060\\ \bottomrule
\end{tabular}}
\caption{The rejection rate of the Cox proportional hazards likelihood ratio test in against the distributions D.1-8 of Table \ref{table:scenarios_type1error}. \label{table:rejectionrates_type1error_cph}}
\end{table}

\subsubsection{Rejection rate under varying censoring regimes \label{sec:appendix:varying_censoring}}

\begin{table}[H]
\centering
\resizebox{\textwidth}{!}{
\begin{tabular}{  l l l l  }
\toprule
    D. &  $ Z \vert X$  & $C \vert X$  & X \\ \hline
    1   & $ \text{Exp}(\text{mean}=\exp(X/5)) $ & $ \text{Exp}(\text{mean}=\theta) $  & $N(0,1)$ \\ 
    2   & $ \text{Exp}(\text{mean}=\exp(X^2)/5) $ & $ \text{Exp}(\text{mean}=\theta \exp(X)) $  & $N(0,1)$ \\ 
    3  & $ \text{Weib}(\text{shape}= 1.75X+3.25) $ & $ \text{Exp}(\text{mean}=\theta X^2) $  & $\text{Unif}[-1,1]$ \\ 
   4 & $ {N}(\text{mean}=100-X,\text{var}= 2X+5.5) $ & $ 82+ \text{Exp}(\text{mean}=\theta) $  & $\text{Unif}[-1,1]$ \\ 
    5 & $ \text{Exp}(\text{mean}=\exp(1^T X/30))$ & $ \text{Exp}(\text{mean}=\theta) $   & $N_{10}(0,\text{cov}=\Sigma_{10})$  \\
    6  & $ \text{Exp}(\text{mean}=\exp(X_4/7)) $ & $ \text{Exp}(\text{mean}=\theta \exp(1^T X/30)) $  & $N_{10}(0,\text{cov}=\Sigma_{10})$   \\
    7  & $ \text{Exp}(\text{mean}=\exp(X_4^2/20)) $ & $ \text{Exp}(\text{mean}=\theta\exp(X^2_2)/20) $  & $N_{10}(0,\text{cov}=\Sigma_{10})$   \\
        8  & $ \text{Exp}(\text{mean}=\exp(X_{10}^2+2X_8)/20) $ & $ \text{Exp}(\text{mean}=\theta\exp(X_2/7)) $  & $N_{10}(0,\text{cov}=\Sigma_{10})$   \\ \bottomrule
\end{tabular}}\caption{\label{table:appendix:varyingcensoringdistributions}The parametrized distributions to test the power under different censoring rates. Here $\Sigma_{10}=MM^T$ where $M$ is a $10\times 10$ matrix of i.i.d. standard normal entries. M is sampled once and then kept fixed. The parameter $\theta$ varies such that $20,40,60,80,100\%$ of the individuals are observed (i.e. $\Delta=1$). The sample size is $n=200$ in each case.  }    
\end{table}
% Please add the following required packages to your document preamble:
% \usepackage{booktabs}

\begin{table}[H]
\centering
\begin{tabular}{@{}lllllll@{}}
\toprule
 $\% \Delta=1$ &         & $20\%$ & $40\%$ & $60\%$ & $80\%$ & $100\%$ \\ \midrule
D.1 & Cph     & 0.243  & 0.422  & 0.565  & 0.705  & 0.753   \\
  & optHSIC & 0.229  & 0.382  & 0.501  & 0.634  & 0.699   \\
  & wHSIC   & 0.038  & 0.062  & 0.182  & 0.395  & 0.703   \\
  & zHSIC   & 0.066  & 0.168  & 0.329  & 0.525  & 0.701   \\ \midrule
D.2 & Cph     & 0.180  & 0.267  & 0.268  & 0.225  & 0.108   \\
  & optHSIC & 0.087  & 0.171  & 0.258  & 0.378  & 0.686   \\ \midrule
D.3 & Cph     & 0.073  & 0.056  & 0.107  & 0.223  & 0.288   \\
  & optHSIC & 0.242  & 0.177  & 0.399  & 0.886  & 0.968   \\ \midrule
D.4 & Cph     & 0.187  & 0.091  & 0.064  & 0.046  & 0.039   \\
  & optHSIC & 0.346  & 0.224  & 0.275  & 0.509  & 0.779   \\
  & wHSIC   & 0.138  & 0.285  & 0.452  & 0.654  & 0.770   \\
  & zHSIC   & 0.105  & 0.172  & 0.274  & 0.410  & 0.759   \\ \midrule
D.5 & Cph     & 0.315  & 0.487  & 0.610  & 0.705  & 0.836   \\
  & optHSIC & 0.268  & 0.439  & 0.546  & 0.629  & 0.775   \\
  & wHSIC   & 0.055  & 0.072  & 0.169  & 0.409  & 0.786   \\
  & zHSIC   & 0.083  & 0.229  & 0.362  & 0.605  & 0.760   \\ \midrule
D.6 & Cph     & 0.461  & 0.732  & 0.834  & 0.916  & 0.939   \\
  & optHSIC & 0.396  & 0.681  & 0.801  & 0.876  & 0.952   \\ \midrule
D.7 & Cph     & 0.055  & 0.068  & 0.077  & 0.078  & 0.107   \\
  & optHSIC & 0.043  & 0.100  & 0.134  & 0.289  & 0.669   \\ \midrule
D.8 & Cph     & 0.162  & 0.313  & 0.431  & 0.517  & 0.572   \\
  & optHSIC & 0.164  & 0.335  & 0.498  & 0.619  & 0.916   \\ \bottomrule
\end{tabular}
\caption{The rejection rates of the various methods against distributions D.1-D.8 given in Table \ref{table:appendix:varyingcensoringdistributions}. When $C\not \independent X$, we only show rejection rates of the CPH test and optHSIC, because wHSIC and zHSIC have high inflated rejection rates due to the dependency of $C$ and $X$. The top row shows the percentage of observed events $(\Delta=1)$. }
\end{table}

\subsection{Binary covariates\label{sec:appendix:binary_covariates}}

As a special case of independence testing we consider the case of a single
binary covariate, i.e., $X\in \{0,1\}$. If one groups the data by covariate,
then testing independence of $T$ and $X$ is equivalent to testing equality 
of lifetime distribution between the two groups. This is known as two-sample testing on right censored data. Popular approaches to this challenge
are the logrank test and various weighted logrank tests. optHSIC can be applied to this problem without any adjustments, while wHSIC can be improved in this case in two ways: first, the weights can be estimated even when the censoring distribution differs between the two groups; and second, there exists an alternative permutation strategy that, experiments show, seems to
control the type 1 error effectively even under dependent censoring. These adjustments are described in Section \ref{adjustingwhsic} and Section \ref{alternativepermutation} respectively. We omit consideration of zHSIC,
as it is fundamentally more limited, given the larger number of available methods.

\subsubsection{wHSIC for two-sample testing \label{adjustingwhsic}}
Let $P_0$ and $P_1$ denote the distribution of $T\vert X=0$ and $T\vert X=1$ respectively. Let the total sample be $D=\left( (x_i,z_i,\delta_i) \right)_{i=1}^n$ as before, and write $\left( (z_i^0,\delta^0_i) \right)_{i=1}^{n_0}$ and $\left((z_i^1,\delta^1_i)\right)_{i=1}^{n_1}$ for the event times and indicators of individuals with covariate $X=0$ and $X=1$ respectively. We want to asses if $P_0 = P_1$. We again use the covariance kernel of Brownian motion. If all of the $n$ times were observed ($\delta=1$), we could measure the difference in empirical distributions between both groups by the MMD between the two distributions:
\begin{align*}
\bigg\vert \bigg \vert \frac{1}{n_0}\sum_{i=1}^{n_0}k(z^0_i,\cdot) - \frac{1}{n_1}\sum_{j=1}^{n_1} k(z^1_j,\cdot) \bigg \vert \bigg \vert_{H}.
\end{align*}
Similar to Section \ref{sectionwhsic}, when some observations are censored, we might reweight the empirical distributions, and instead compare the weighted empirical distributions 
$$ \sum_{i=1}^{n_0} w_i^0 k(z_i^0,\cdot) \qquad \text{and} \qquad \sum_{i=1}^{n_0} w_i^1 k(z_i^1,\cdot).$$
We propose that the weights $w_i$ are computed by the Kaplan--Meier weights \emph{within} each group. The test statistic thus becomes:
\begin{align*}
\text{wHSIC}(D) \coloneqq \bigg\vert \bigg \vert\sum_{i=1}^{n_0} w_i^0 k(z_i^0,\cdot) - \sum_{i=1}^{n_0} w_i^1 k(z_i^1,\cdot) \bigg \vert \bigg \vert^2_{H}.
\end{align*}
This statistic was also, independently, proposed by \cite{matabuenatwosample}, and can be seen as a special case of wHSIC in the case of binary covariates. Under the hypothesis that $C\independent X$, one can obtain $p$-values using a permutation test, resulting in the following algorithm. Section \ref{alternativepermutation} provides an alternative permuation strategy under dependent censoring, that was proposed by \cite{wangpermutation}. It was proposed in the context of the logrank test, but can equally be used for other statistics.\\

\begin{algorithm}[H]
    \SetKwInOut{Input}{Input}
    \SetKwInOut{Output}{Output}
	\Input{  $D=((x_i,z_i,\delta_i))_{i=1}^{n}$, significance level $\alpha$, number of permutations $B$. } 
Sample permutations $\pi_1,\dots,\pi_B$ i.i.d. uniformly from $S_n$. \;
Breaking ties at random, compute the rank $R$ of $\whsic(D)$ in the vector $$\left(\whsic(D),\whsic (\pi_1 (D)),\whsic (\pi_2 (D)),\dots,\whsic (\pi_{B} (D)) \right)$$ where wHSIC is as defined above.  \;
	\Output{Reject if $p\coloneqq R/(B+1)\leq \alpha.$}
    \caption{wHSIC for two-sample data}
\end{algorithm}

\subsubsection{ipxHSIC \label{alternativepermutation}}

This subsection overviews a test we name ipxHSIC, which uses the same statistic $\text{wHSIC}(D)$ defined in Section \ref{adjustingwhsic} above, but a different permutation strategy that is robust against differences in the censoring distributions of both groups. The permutation strategy was proposed in \cite{wangpermutation} to provide reliable $p$-values for the logrank statistic in the case of small or unequal sample sizes. In fact \cite{wangpermutation} propose two permutation strategies: the first one, which they call `ipz' (section 2.1.1), permutes group membership and the second, which they call `ipt'(section 2.1.2), permutes survival times. These permutation strategies were proposed in the context of logrank tests - but can equally be applied to other statistics, such as wHSIC. The first strategy, which permutes the covariates, is referred to in their work as `ipz' since the procedure first imputes several unobserved times, and then permutes the covariate, which in their work is denoted by $z$. We refer to it as `ipx', as our covariate is denoted by $x$. The algorithm uses the Kaplan--Meier estimator to estimate three distributions: 1) $G^0$, the censoring distribution in group 0, based on the data observed in group 0; 2) $G^1$, the censoring distribution in group 1, based on the data observed in group 1; 3) the distribution of the lifetimes $F$ based on the pooled dataset containing both groups. With these estimates, a new dataset is constructed, consisting of $n$ observations, each consisting of a covariate, an event time, and two censoring times, one for each censoring distribution. This larger dataset is then permuted, and transformed back to a censored dataset.  \cite{wangpermutation} describe the algorithm in full detail. This method thus combines the wHSIC statistic with an alternative permutation strategy. Because this method relies on explicitly estimating censoring distributions in each group, it is difficult to extend this to the continuous case, where for each covariate we only have one individual in the study with that exact covariate. 

\subsubsection{Numerical comparison of methods in the two-sample case}

We generate data from four different distributions for each of $X$, $T$, and $C$ to compare the power and type 1 error of the proposed methods optHSIC, wHSIC, ipxHSIC to the power and type 1 error of the classic logrank test and a weighted logrank test proposed by \cite{weightedlogrank}. The classical logrank test is known to have low power against certain alternatives, such as crossing survival curves. A weighted logrank test assigns weights to data, giving the logrank test power against different alternatives. In \cite{weightedlogrank} a combination of weights is proposed, so as to achieve power against a wider class of alternatives. In particular \cite{weightedlogrank} propose a combination of two sets of weights, corresponding to proportional and crossing hazards. As this section mostly serves to provide an example of our methods, we simulate fewer scenarios than in Section \ref{sec:experiments}. In each scenario we let the $n$ values range from $n=20$ to $n=400$ in intervals of $20$. To obtain $p$-values in the three HSIC based methods as well as the weighted logrank test we use a permutation test with 1999 permutations. We reject the null hypothesis if our obtained $p$-value is less than $0.05$. \\

\begin{table}[H]
\begin{center}
    \begin{tabular}{  l  l  l  l  l  l  p{5cm} }
    
    D. & $T_0$ & $T_1$ & $C_0$ & $C_1$ & \% Observed \\ \hline
    1  & $\text{Exp}(1)$ &   $\text{Exp}(1/1.6)$   & $ \text{Exp}(1/2) $  & $ \text{Exp}(1/2) $ & 60 \% \\ 
     2  & $\text{Weib}(1,5)$  & $\text{Weib}(1,1.5) $ & $ \text{Exp}(1/2) $  & $ \text{Exp}(1/2) $ & 60 \%  \\ 
3 & $\text{Exp}(1)$  & (0.43, 1.39+$\text{Exp}(1))$  & $1+\text{Exp}(1/2)$  &$1+\text{Exp}(1/2)$  & 90 \% \\ 
4 & $\text{Exp}(1)$ & $\text{Exp}(1)$ & $\text{Exp}(2)$ & None & 65 \% \\ 
    \end{tabular}\\
\end{center}
\caption{\label{tabletwosample}The 4 scenarios in which in which we perform two-sample tests. $T_1$ is $0.43$ w.p. $0.75$ and $1.39+\text{Exp}(1)$ w.p. 0.25. Note that in D.4 the null hypothesis holds.}    
\end{table}

\begin{figure}[H]
\begin{subfigure}{.5\textwidth}
  \includegraphics[width=1.1\linewidth]{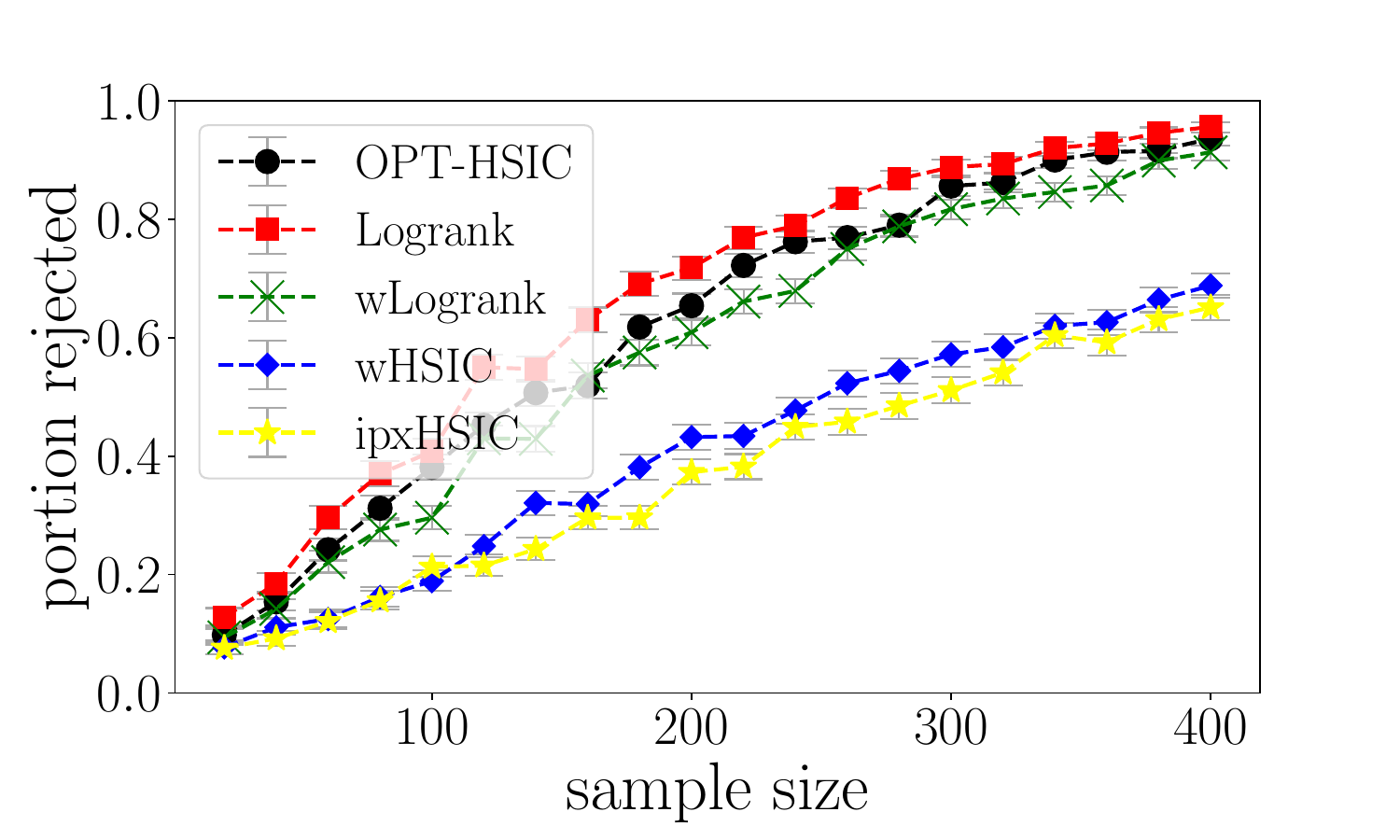}
  \label{fig:sub1}
  \caption{Scenario 1}
\end{subfigure}%
\begin{subfigure}{.5\textwidth}
    \includegraphics[width=1.1\linewidth]{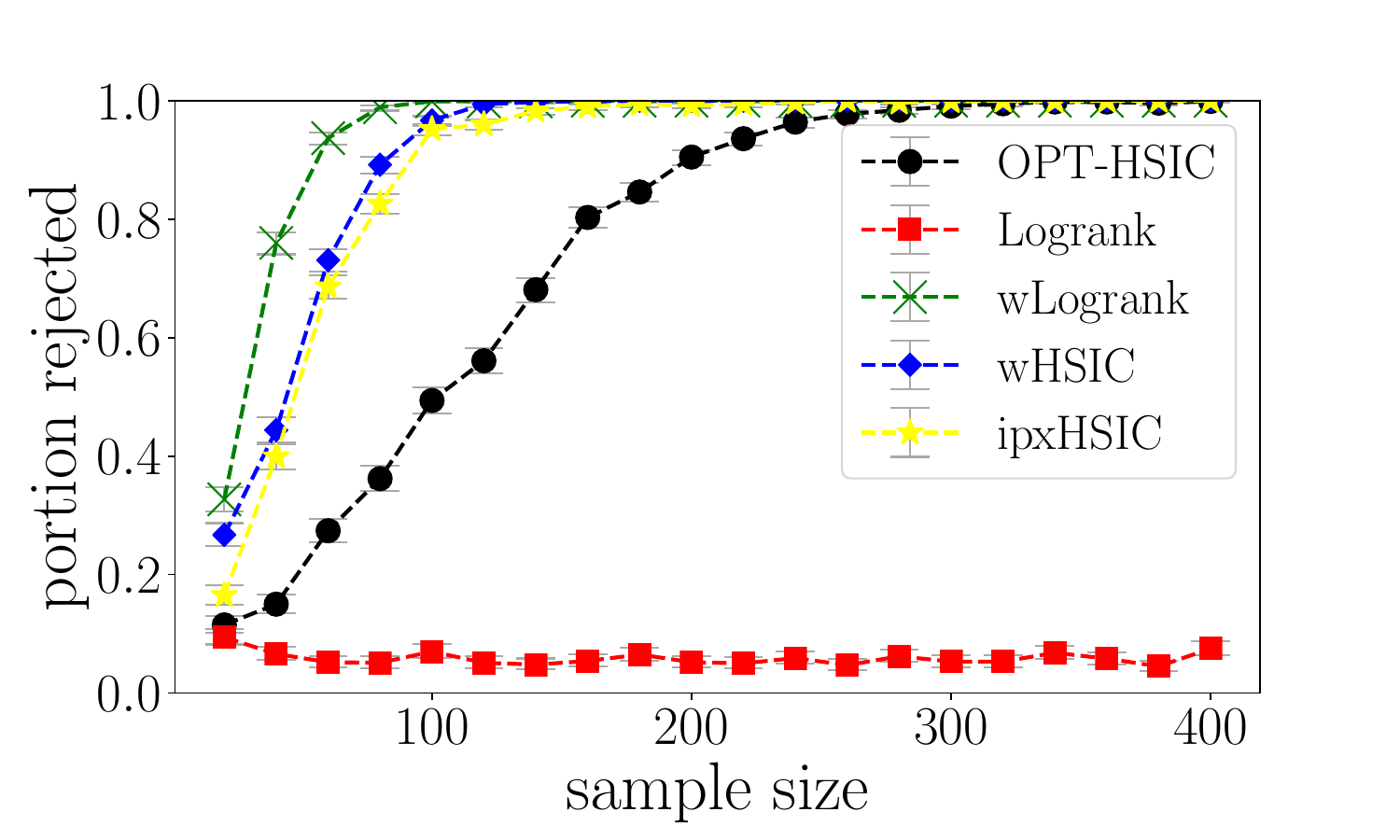}
  \label{fig:sub1}
  \caption{Scenario 2}
\end{subfigure}\\
\centering
\begin{subfigure}{.5\textwidth}
 \includegraphics[width=1.1\linewidth]{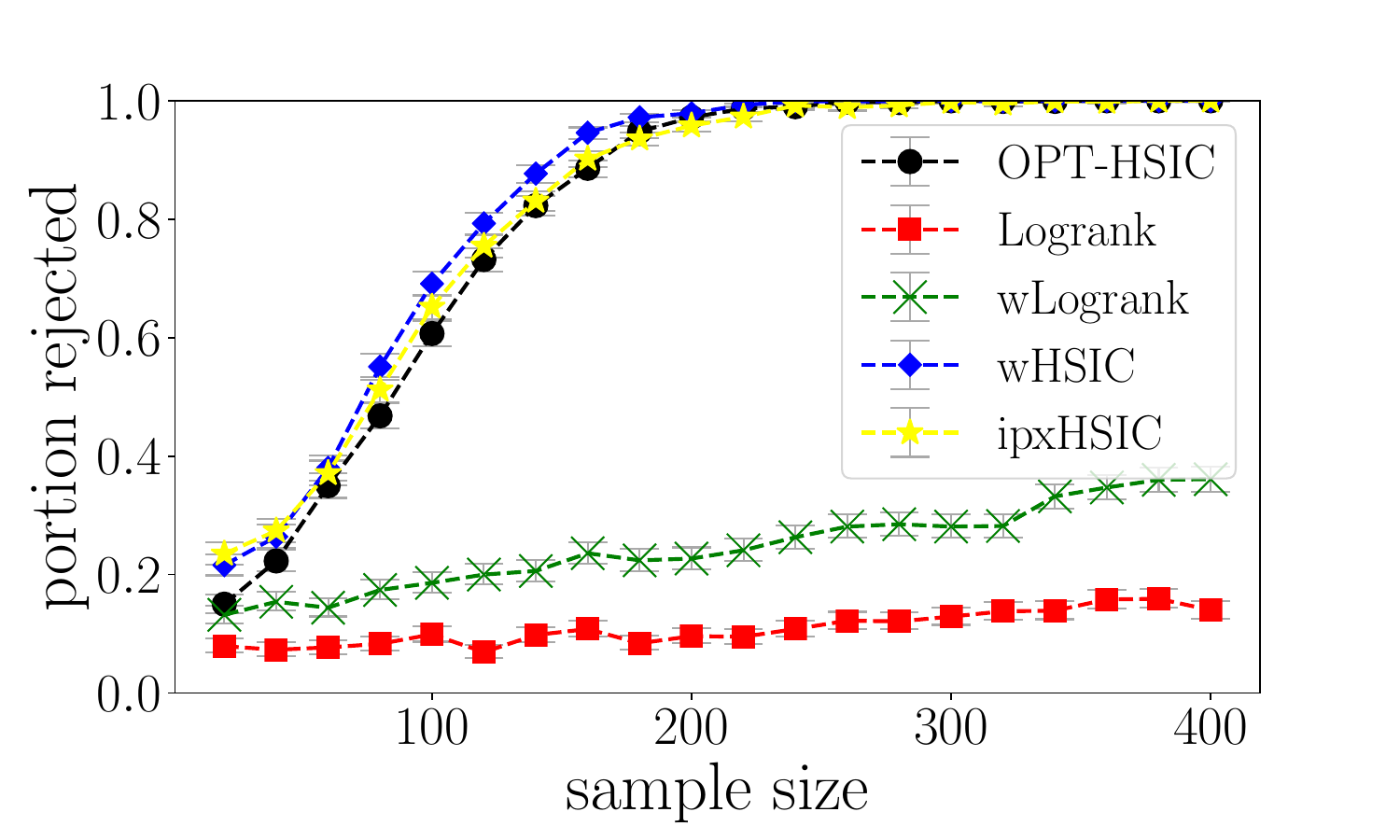}
  \label{fig:sub1}
      \caption{Scenario 3}
\end{subfigure}%
\begin{subfigure}{.5\textwidth}
\includegraphics[width=1.1\linewidth]{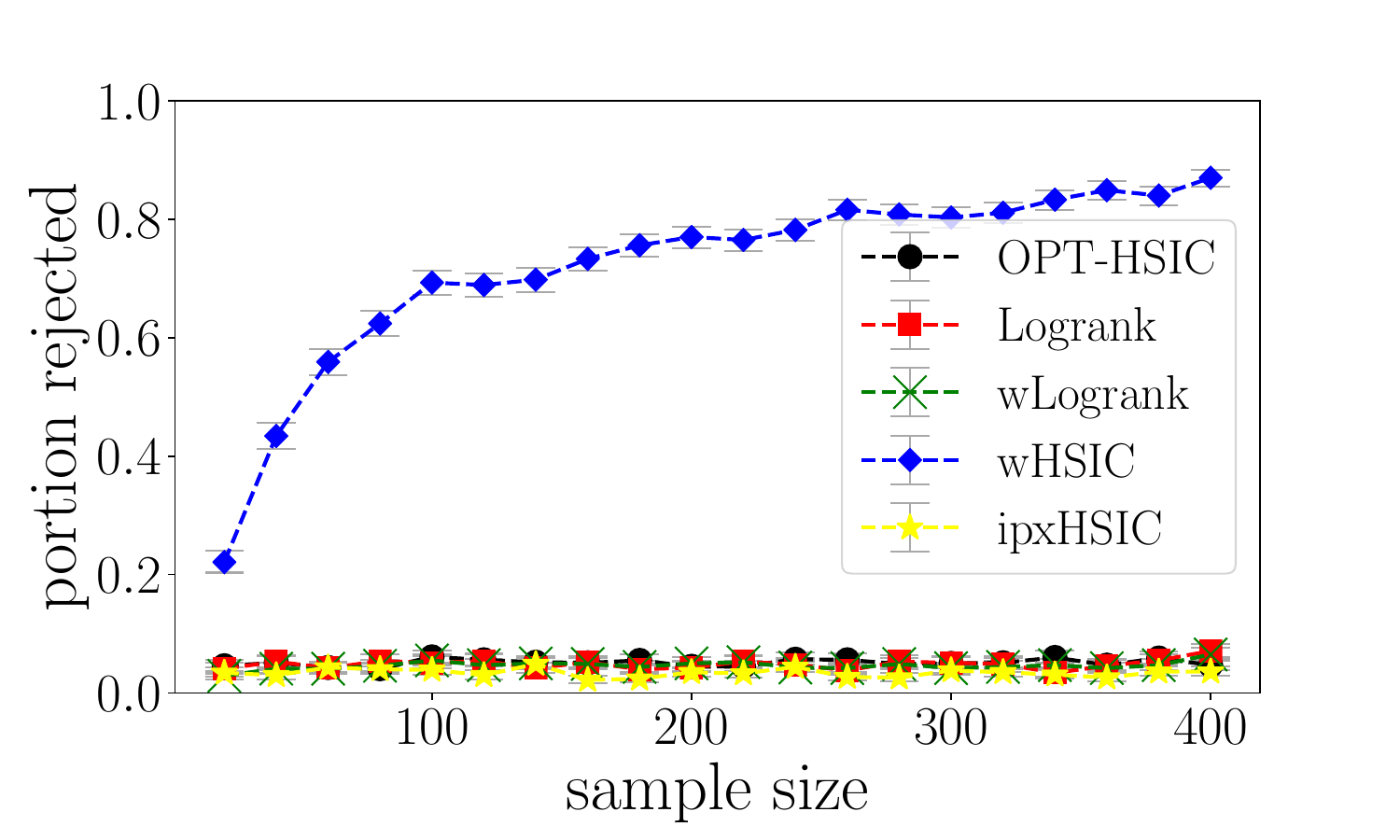}
\label{fig:sub1}
\caption{Scenario 4}
\end{subfigure}%
\caption{\label{plottwosample}Rejection rates of the various two-sample tests. Note that in Scenarios 1-3 the alternative hypothesis holds, implying high rejection rate is desirable. In Scenario 4, the null hypothesis holds, so a rejection rate of $0.05$ is desirable. wHSIC in that case thus wrongly rejects the null: this reflects the crucial assumption of wHSIC that the groups have identical censoring distributions. }
\end{figure}

\subsection{Example of data with binary covariates in which optHSIC does not perform well \label{sec:appendix:exampletwosample}}
Consider the following case. Group $X=0$ contains 1050 individuals. Group $X=1$ contains 50 individuals. Up to time $t=50$, no events occur. At time $t=50$, 1000 individuals of group $X=0$ are censored. There are now 50 individuals remaining in each of the groups. The $50$ individuals of group $X=0$ have event time $100+\text{Exp}(\text{mean}=2)$ and the $50$ individuals of group $X=1$ have event time $100+\text{Exp}(\text{mean}=1)$. In this example we find the logrank test to have power of $89\%$ and optHSIC to have power of only $12 \%$. 

What happens is the following: At time $t=100$ there are 100 individuals at risk. The individuals of group $X=1$ are likely to have their event first, due to the higher rate in the corresponding exponential distribution. Because in group $X=0$ 1000 individuals have been censored, the optimal transport map has a high chance of choosing $\tilde x=0$ when $x_i=1$. So while in the resulting dataset a slight bias will remain towards individuals in group $X=1$ having their event first, this bias is much less clear than before the transformation. (We thank a reviewer for proposing this scenario.)

There are several characteristics that make the difference in this example so large. Firstly, as mentioned before, optimal transport relies on the ability to choose a `similar covariate'. When covariates are binary it may happen that $\tilde x=0$ while $x_i=1$. Secondly, in this case all the censoring happens in group $X=0$, causing optimal transport to send mass from group $X=1$ to group $X=0$. Furthermore, the censoring rate is high ($91\%$ of all individuals). Lastly, before the censoring occurs there is no evidence of a difference in distribution.

\subsubsection{Comments on two-sample simulations}

The results show that the logrank test and the weighted logrank test have little power in scenario 2 and 3 and scenario 3 respectively, even though large differences between the samples are present. The logrank is designed to detect differences as in scenario 1, and the weighted logrank is designed to detect differences as in scenario 1 and 2, sacrificing power slightly compared to the logrank test in the first. Scenario 3 is designed to defeat the weighted logrank test, since we constructed an extreme version of an early crossing survival curve, and the test does not contain weights for early crossing. 
The kernel methods are fully nonparametric, but do lose power in certain scenarios, most notably in Scenario 2 and the example provided. We believe optHSIC is not ideally suited to the case of binary covariates, since optimal transport relies on choosing a `similar' covariate. Furthermore, while there are no fully nonparametric alternatives for independence testing for continuous covariates, there are more alternative two-sample tests. We thus believe the main value of optHSIC lies in the case of continuous covariates.

\bibliographystyle{DeGruyter}
\bibliography{sample}

\end{document}